\newcommand{\id}{\operatorname{Id}}
\newcommand{\Vol}{\operatorname{Vol}}
\newcommand{\Iso}{\operatorname{Iso}}
\newcommand{\rot}{{\mathfrak r}}
\newcommand{\centr}{\operatorname{Z}}
\newcommand{\norm}{\operatorname{N}}
\newcommand{\aff}{\mathrm{Aff}}
\newcommand{\Flat}{\operatorname{\mathcal M_\text{\rm flat}}}
\newcommand{\teichflat}{\operatorname{\mathcal T_\text{\rm flat}}}
\newcommand{\N}{\mathds N}
\newcommand{\Z}{\mathds Z}
\newcommand{\R}{\mathds R}
\newcommand{\C}{\mathds C}
\renewcommand{\H}{\mathds H}
\newcommand{\GL}{\mathsf{GL}}
\newcommand{\SO}{\mathsf{SO}}
\renewcommand{\O}{\mathsf O}
\newcommand{\U}{\mathsf{U}}
\newcommand{\Sp}{\mathsf{Sp}}
\newcommand{\g}{\mathrm g}
\newcommand{\h}{\mathrm h}
\newcommand{\Diff}{\operatorname{Diff}}
\newcommand{\MCG}{\operatorname{MCG}}
\newcommand{\Out}{\operatorname{Out}}
\newcommand{\Inj}{\operatorname{Inj}\!}
\def\hs{\hskip.7pt}
\def\hh{\hskip.4pt}
\def\nh{\hskip-.7pt}
\def\nnh{\hskip-1.5pt}
\def\ve{\varepsilon}
\def\w{^{\phantom i}}
\newtheorem{theorem}{Theorem}[]
\newtheorem{lemma}[theorem]{Lemma}
\newtheorem{proposition}[theorem]{Proposition}
\newtheorem{corollary}[theorem]{Corollary}
\newtheorem{definition}[theorem]{Definition}
\newtheorem*{bthm}{\sc Bieberbach Theorems}
\newtheorem{claim}[theorem]{Claim}
\newtheorem*{theorem*}{Theorem}
\newtheorem{mainthm}{\sc Theorem}
\newtheorem{maincor}[mainthm]{\sc Corollary}
\theoremstyle{remark}
\newtheorem{example}[theorem]{Example}
\newtheorem{remark}[theorem]{Remark}
\numberwithin{equation}{section}
\numberwithin{theorem}{section}
\title[Teichm\"uller theory and collapse of flat manifolds]{Teichm\"uller theory and collapse\\ of flat manifolds}
\author[R. G. Bettiol]{Renato G. Bettiol}
\author[A. Derdzinski]{Andrzej Derdzinski}
\author[P. Piccione]{Paolo Piccione}
\subjclass[2010]{22E40, 32G15, 53C15, 53C24, 53C29, 57M60, 57R18, 58D17, 58D27}
\address{\begin{tabular}{lll}
University of Pennsylvania & & The Ohio State University\\
Department of Mathematics & & Department of Mathematics \\
209 South 33rd St  & & 231 W.~18th Avenue \\
Philadelphia, PA, 19104-6395, USA & & Columbus, OH 43210, USA\\
\emph{E-mail address}: {\tt rbettiol@math.upenn.edu} & & \emph{E-mail address}: {\tt andrzej@math.ohio-state.edu}
\\[.5cm]  Universidade de S\~ao Paulo\\ Departamento de Matem\'atica\\ Rua do Mat\~ao 1010\\
S\~ao Paulo, SP 05508--090, Brazil\\ \emph{E-mail address}: {\tt piccione@ime.usp.br}
\end{tabular}
}
\date{\today}
\begin{document}
\begin{abstract}
We provide an algebraic description of the Teichm\"uller space and moduli space of flat metrics on a closed manifold or orbifold and study its boundary, which consists of (isometry classes of) flat orbifolds to which the original object may collapse. It is also shown that every closed flat orbifold can be obtained by collapsing closed flat manifolds, and the collapsed limits of closed flat $3$-manifolds are classified.
\end{abstract}
\maketitle


\section{Introduction}
A fundamental question in Riemannian geometry is whether there exist deformations of a given manifold that preserve certain curvature conditions and, if so, what is the nature of the limiting spaces. In this paper, we study how flat manifolds and flat orbifolds can be deformed while keeping them flat; and, in particular, how they collapse and what are the possible limits. 

The Gromov-Hausdorff limit of a sequence of closed flat manifolds is clearly a flat Alexandrov space, that is, it has curvature bounded both from above and from below by zero, in triangle comparison sense. Our first result implies that the only singularities that may arise in such collapsed flat limits are the mildest possible, and that any flat space with singularities of this type admits a smooth flat resolution:

\begin{mainthm}\label{mainthm:limits}
The Gromov-Hausdorff limit of a sequence of closed flat manifolds is a closed flat orbifold. Conversely, every closed flat orbifold is the Gromov-Hausdorff limit of a sequence of closed flat manifolds.
\end{mainthm}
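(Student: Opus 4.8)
\emph{Overall approach.}
The plan is to phrase both collapse and its limits group\nobreakdash-theoretically.
A closed flat $n$\nobreakdash-manifold is $\R^n/\Gamma$ for a Bieberbach group
$\Gamma\subset\Iso(\R^n)=\R^n\rtimes\O(n)$, and --- unlike in general bounded\nobreakdash-curvature collapse --- the universal cover never degenerates: it stays the \emph{fixed} Euclidean space $\R^n$.
So the only datum varying along a sequence is the deck group, viewed as a point of the (compact) space of closed subgroups of $\Iso(\R^n)$ with the Chabauty topology.
Besides Bieberbach's theorems and the Auslander--Kuranishi realization theorem, the one external input I would use is the standard equivariant Gromov--Hausdorff fact that if closed subgroups $\Gamma_k\subset\Iso(\R^n)$ act cocompactly with $\diam(\R^n/\Gamma_k)$ uniformly bounded and $\Gamma_k\to G$ in the Chabauty topology, then $G$ is closed, acts properly and cocompactly, and $\R^n/\Gamma_k\to\R^n/G$.
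Thus identifying collapsed limits amounts to identifying Chabauty limits of Bieberbach groups.

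\emph{Limits are orbifolds.}
Given closed flat manifolds $(M_k,g_k)\to X$, the diameters are bounded and, after a subsequence, the $M_k$ share one dimension $n$; by Bieberbach's third theorem (finitely many affine types per dimension) a further subsequence gives $M_k=\R^n/\rho_k(\Gamma)$ for a fixed Bieberbach group $\Gamma$ --- with translation lattice $L$ and holonomy $H=\Gamma/L$ --- and embeddings $\rho_k\colon\Gamma\hookrightarrow\Iso(\R^n)$ with discrete cocompact image.
By Chabauty compactness and the quoted fact I may assume $\rho_k(\Gamma)\to G$ and $X=\R^n/G$.
The crux is then to analyze $G$: writing $p\colon\Iso(\R^n)\to\O(n)$ for the rotational part, each $p(\rho_k(\Gamma))$ is a finite subgroup of $\O(n)$ of the fixed order $m:=|H|$, and a nontrivial element of $\O(n)$ of order $\le m$ has a nontrivial eigenvalue that is a root of unity of order $\le m$, hence lies at operator\nobreakdash-norm distance $\ge 2\sin(\pi/m)$ from the identity --- a bound inherited by the Chabauty limit, so $p(G)$ is finite.
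Hence $V:=G^0\subseteq\ker p=\R^n$ is a linear subspace acting by translations; $\R^n/V\cong\R^{n-\dim V}$ is flat, and the discrete component group $G/V$ acts on it properly, cocompactly, and by isometries.
Since every isotropy group $G_x$ meets $V$ trivially (translations act freely) and is compact (the action is proper), it is finite, so $X=\R^n/G=\R^{n-\dim V}/(G/V)$ is a closed flat orbifold.

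\emph{Every closed flat orbifold arises.}
Write $O=\R^m/\Lambda$ with $\Lambda\subset\Iso(\R^m)$ crystallographic, translation lattice $L_\Lambda=\Lambda\cap\R^m$, and holonomy $q\colon\Lambda\twoheadrightarrow H=\Lambda/L_\Lambda$; if $\Lambda$ is torsion\nobreakdash-free then $O$ is already a closed flat manifold, so assume not.
By Auslander--Kuranishi there is a Bieberbach group $\Gamma_0\subset\Iso(\R^d)$ with holonomy epimorphism $\pi_0\colon\Gamma_0\twoheadrightarrow H$, faithful holonomy representation $\chi\colon H\to\O(d)$, and kernel $\Gamma_0\cap\R^d$.
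I would then form the fibre product $\Gamma:=\Gamma_0\times_H\Lambda=\{(a,b):\pi_0(a)=q(b)\}$, acting on $\R^d\times\R^m=\R^{m+d}$ through $\Gamma_0$ and $\Lambda$ on the two factors.
One checks that $\Gamma$ is discrete, of index $|H|$ in the cocompact group $\Gamma_0\times\Lambda$ hence cocompact, and torsion\nobreakdash-free: a torsion element $(a,b)$ has $a$ torsion in $\Gamma_0$, so $a=e$, so $q(b)=\pi_0(e)=e$, so $b\in L_\Lambda$ is torsion\nobreakdash-free and $b=e$.
Thus $M:=\R^{m+d}/\Gamma$ is a closed flat manifold.
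Finally, let $s_k$ be the dilation of $\R^{m+d}$ fixing the $\R^m$\nobreakdash-factor and scaling the $\R^d$\nobreakdash-factor by $1/k$; then $s_k\Gamma s_k^{-1}$ is again a discrete cocompact torsion\nobreakdash-free subgroup of $\Iso(\R^{m+d})$, and $M_k:=\R^{m+d}/s_k\Gamma s_k^{-1}$ is $M$ with its $\R^d$\nobreakdash-directions shrunk.
As $k\to\infty$ the translational parts in the $\R^d$\nobreakdash-factor scale to $0$, so $s_k\Gamma_0 s_k^{-1}\to\R^d\rtimes\chi(H)$ and hence $s_k\Gamma s_k^{-1}\to G:=(\R^d\rtimes\chi(H))\times_H\Lambda$; a direct check gives $G^0=\R^d\times\{e\}$ and $G/G^0\cong\Lambda$ acting on $\R^{m+d}/G^0=\R^m$ in the original way, so $\R^{m+d}/G=O$ and, by the quoted fact, $M_k\to O$.

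\emph{Main obstacle.}
The first half, once the equivariant\nobreakdash-convergence fact is granted, is a clean structural analysis of Chabauty limits whose only real ingredient is the no\nobreakdash-small\nobreakdash-subgroups bound forcing the identity component of the limit to be translational.
The heart of the matter is the converse, and specifically the requirement that the collapsing family consist of \emph{manifolds}: simply shrinking an evident torus fibration of $O$ would reproduce the torsion of $\Lambda$.
The essential point is thus the construction of a torsion\nobreakdash-free higher\nobreakdash-dimensional Bieberbach group mapping onto $\Lambda$ with free abelian kernel --- here obtained by splicing $\Lambda$ with an auxiliary Bieberbach group realizing its holonomy $H$ (Auslander--Kuranishi) over the common quotient $H$ --- together with the verification that this fibre product is at once discrete, cocompact, torsion\nobreakdash-free, and degenerates to $O$ under the appropriate rescaling.
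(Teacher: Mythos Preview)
Your proof is correct and follows essentially the same strategy as the paper's: equivariant Gromov--Hausdorff (Fukaya--Yamaguchi) convergence of the deck groups for the forward direction, and the Auslander--Kuranishi realization spliced diagonally with the given crystallographic group for the converse --- your fibre product $\Gamma_0\times_H\Lambda$ is precisely the fundamental group of the paper's diagonal quotient $(T^d\times T^m)/H$, and your rescaling $s_k$ is the paper's degeneration $\g_\lambda=\g_{T^n}\oplus\lambda\,\g_{T^m}$. The only presentational difference is that the paper factors the forward direction through the torus cover in two steps (first showing flat tori limit to flat tori via the constant-displacement characterization of translations, then passing to the $H$-quotient), whereas you take the Chabauty limit of the full Bieberbach groups in one step and use a no-small-subgroups bound on $\O(n)$ to force the identity component of the limit into the translation subgroup; both arguments are standard and equivalent in effect.
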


The formation of orbifold singularities in the collapse of smooth flat manifolds can be easily seen already in dimension $2$. Consider flat Klein bottles as rectangles with the usual boundary identifications. The Gromov-Hausdorff limit obtained by shrinking the lengths of a pair of opposite sides is either $S^1$ or a closed interval (a flat $1$-orbifold), depending on whether the identification of these sides preserves or reverses orientation.

A simple diagonal argument, combined with Theorem~\ref{mainthm:limits}, implies that the collection of closed flat orbifolds is closed in the Gromov-Hausdorff topology. In light of the fact that every object in this collection is the limit of smooth flat manifolds, it would be interesting to determine whether every orbifold with $\sec\geq0$ is the limit of manifolds with $\sec\geq0$; see Remark~\ref{rem:cheegerdef}. An important and currently open question is whether every finite-dimensional Alexandrov space with $\mathrm{curv}\geq K$ is the limit of smooth manifolds with $\sec\geq K$. In this context, recall that Alexandrov spaces of dimension $3$ and $4$ are homeomorphic to orbifolds~\cite{fernando-luis,harvey-searle}.

Analyzing deformations and limits of flat orbifolds leads to investigating the moduli space $\Flat(\mathcal O)$ of flat metrics on a fixed flat orbifold~$\mathcal O$ and its ideal boundary. The nature of this moduli space is very reminiscent of the classical Teichm\"uller theory for hyperbolic surfaces, in that $\Flat(\mathcal O)$ is the quotient of a \emph{Teichm\"uller space} $\teichflat(\mathcal O)$, diffeomorphic to an open ball, under the action of a (discrete) mapping class group. This fits the picture of a deformation theory for geometric structures of much larger scope pioneered by Thurston~\cite{thurston} and further developed in \cite{Baues2000,Goldman88}; see Subsection~\ref{sub:defXG}. Around 45 years ago, Wolf~\cite{Wolf73} identified the moduli space $\Flat(M)$ of flat metrics on a flat manifold $M$. However, even in this special case, a systematic and unified treatment of the Teichm\"uller theory of flat manifolds does not seem to be available in the literature, despite some scattered results, e.g.~\cite{Kang06,KangKim03}.
Towards this goal, we establish the following algebraic description of the Teichm\"uller space of flat metrics on a flat orbifold, which provides a straightforward method to compute it:

\begin{mainthm}\label{mainthm:teichdescription}
Let $\mathcal O$ be a closed flat orbifold, and denote by $W_i$, $1\leq i\leq l$, the isotypic components of the orthogonal representation of its holonomy group. Each $W_i$ consists of $m_i$ copies of the same irreducible representation, and we write $\mathds K_i$ for $\R$, $\C$, or $\H$, according to this irreducible being of real, complex, or quaternionic type.
The Teichm\"uller space $\teichflat(\mathcal O)$ is diffeomorphic to:
\begin{equation*}
\teichflat(\mathcal O)\cong\prod_{i=1}^l\frac{\GL(m_i,\mathds K_i)}{\O(m_i,\mathds K_i)},
\end{equation*}
where $\GL(m,\mathds K)$ is the group of $\mathds K$-linear automorphisms of $\mathds K^m$ and $\O(m,\mathds K)$ stands for $\O(m)$, $\U(m)$, or $\Sp(m)$, when $\mathds K$ is, respectively, $\R$, $\C$, or $\H$.
In particular, $\teichflat(\mathcal O)$ is real-analytic and diffeomorphic to~$\R^d$.
\end{mainthm}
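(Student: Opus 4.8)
The plan is to split the proof into a geometric reduction and a representation-theoretic computation. \emph{First}, I would identify $\teichflat(\mathcal O)$ with the space $\mathcal B$ of inner products on $\R^n$ invariant under the linear holonomy of $\mathcal O$. Write $\mathcal O=\R^n/\Gamma$ with $\Gamma\subset\mathrm{Isom}(\R^n)=\R^n\rtimes\O(n)$ a crystallographic group, $\Lambda\trianglelefteq\Gamma$ its maximal normal abelian (finite-index) subgroup of translations, and $F=\Gamma/\Lambda$ the holonomy group, acting on $\Lambda\otimes_\Z\R\cong\R^n$ by an orthogonal representation $\rho\colon F\to\O(n)$. Using Bieberbach's rigidity theorem (any deformation of $\Gamma$ inside $\mathrm{Isom}(\R^n)$ is, up to conjugation, one in which the integral holonomy representation on $\Lambda$ is unchanged), the rigidity of $\rho(F)$-invariant lattices in $\R^n$ up to the centralizer of $\rho(F)$, and the vanishing $H^1(F;\R^n)=0$ (which makes the translational parts irrelevant, as any two differ by a coboundary), one shows that every flat metric on $\mathcal O$ is isotopic to one of the form $\R^n/\Gamma$ with $\R^n$ carrying a new $\rho(F)$-invariant inner product $b$, and that distinct such $b$ yield distinct points of $\teichflat(\mathcal O)$ — the residual identifications being exactly the symmetries that descend to the \emph{moduli} space $\Flat(\mathcal O)$, not to the Teichm\"uller space. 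Hence $\teichflat(\mathcal O)\cong\mathcal B$, which is an open convex cone in the real vector space of $\rho(F)$-invariant symmetric bilinear forms on $\R^n$, in particular a real-analytic manifold.

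\emph{Second}, I would compute $\mathcal B$. Decompose $\R^n=\bigoplus_{i=1}^l W_i$ into isotypic components of $\rho$, with $W_i\cong V_i^{\oplus m_i}$ for an irreducible $V_i$ with $\mathrm{End}_F(V_i)\cong\mathds K_i$. Because real representations of finite groups are self-dual and $\Hom_F(W_i,W_j)=0$ for $i\neq j$, every $b\in\mathcal B$ makes the blocks $W_i$ mutually orthogonal, so $\mathcal B\cong\prod_{i=1}^l\mathcal B_i$ with $\mathcal B_i$ the cone of $\rho|_{W_i}$-invariant inner products on $W_i$. Fixing a reference $b_i^0\in\mathcal B_i$, every symmetric bilinear form on $W_i$ equals $b_i^0(T\,\cdot\,,\cdot)$ for a unique $b_i^0$-self-adjoint $T\in\mathrm{End}_\R(W_i)$; it lies in $\mathcal B_i$ exactly when $T\in\mathrm{End}_F(W_i)\cong M_{m_i}(\mathds K_i)$ and $T>0$. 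The $b_i^0$-adjoint restricts to a positive $\R$-algebra involution of $M_{m_i}(\mathds K_i)$, which by the classification of such involutions (equivalently, by a direct Schur-lemma computation on the multiplicity space $\Hom_F(V_i,W_i)\cong\mathds K_i^{m_i}$) is, in a suitable $\mathds K_i$-basis, the conjugate-transpose. Thus $\mathcal B_i$ is real-analytically isomorphic to the cone $\mathcal P_{m_i}(\mathds K_i)$ of positive-definite $\mathds K_i$-Hermitian $m_i\times m_i$ matrices.

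\emph{Finally}, the map $\GL(m,\mathds K)\to\mathcal P_m(\mathds K)$, $g\mapsto gg^*$, is a real-analytic submersion whose fibres are the left cosets of $\O(m,\mathds K)=\{g:gg^*=\id\}$ (surjectivity via Hermitian square roots), giving a real-analytic diffeomorphism $\GL(m,\mathds K)/\O(m,\mathds K)\cong\mathcal P_m(\mathds K)$; moreover the exponential map is a real-analytic diffeomorphism from the vector space of $\mathds K$-Hermitian matrices onto $\mathcal P_m(\mathds K)$, so $\mathcal P_m(\mathds K)\cong\R^{d_m}$ with $d_m=\dim_\R\GL(m,\mathds K)/\O(m,\mathds K)$. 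Assembling the three steps yields $\teichflat(\mathcal O)\cong\prod_{i=1}^l\GL(m_i,\mathds K_i)/\O(m_i,\mathds K_i)\cong\R^d$, $d=\sum_{i=1}^l d_{m_i}$, real-analytically.

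I expect the main obstacle to be the geometric reduction of the first step: making precise why $\teichflat(\mathcal O)$ is the \emph{full} space of invariant inner products — that is, that the integral holonomy representation is rigid under flat deformations, that the translational part contributes no moduli, and that no further quotient survives once the marking is retained. The representation theory of the second step is standard, but care is still needed to produce the correct division algebra $\mathds K_i$ and the correct (conjugate-transpose) involution uniformly across the real, complex, and quaternionic cases.
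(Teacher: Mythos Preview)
Your proposal is correct and lands on the same representation-theoretic core as the paper: the Teichm\"uller space is governed by the centralizer of the holonomy in $\GL(n)$, which by Schur's Lemma factors over the isotypic components as $\prod_i \GL(m_i,\mathds K_i)$, and the orthogonal piece cuts out $\prod_i \O(m_i,\mathds K_i)$. The difference is in the geometric front-end. You parametrize flat metrics by $\rho(F)$-invariant inner products $\mathcal B$ on $\R^n$ while keeping $\Gamma$ fixed, and you justify this via rigidity of the integral holonomy representation together with $H^1(F;\R^n)=0$ to kill the translational moduli. The paper instead keeps the standard inner product and varies the group: it defines $\teichflat(\mathcal O)=\O(n)\backslash\mathcal C_\pi$ where $\mathcal C_\pi=\{A\in\GL(n):A H_\pi A^{-1}\subset\O(n)\}$, observes that $A\in\mathcal C_\pi$ iff $A^{\mathrm t}A$ centralizes $H_\pi$, and then uses the polar decomposition $A=OP$ to obtain $\mathcal C_\pi=\O(n)\cdot\centr_{\GL(n)}(H_\pi)$, whence $\teichflat(\mathcal O)\cong\big(\centr_{\GL(n)}(H_\pi)\cap\O(n)\big)\backslash\centr_{\GL(n)}(H_\pi)$. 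These two pictures are dual---your $\mathcal B$ is exactly the image of $\mathcal C_\pi$ under $A\mapsto A^{\mathrm t}A$, and the polar decomposition is the inverse of your map $g\mapsto gg^*$---but the paper's route is shorter: it needs only Bieberbach~II and polar decomposition, and never has to invoke $H^1$-vanishing or argue separately about lattice rigidity and translational cocycles. Your route, on the other hand, makes the identification with a convex cone of inner products explicit from the outset, which is conceptually pleasant and immediately gives contractibility. Your treatment of the involution on $M_{m_i}(\mathds K_i)$ is also more careful than the paper's, which simply asserts $\mathrm{End}_{H_\pi}(W_i)\cap\O(W_i)\cong\O(m_i,\mathds K_i)$.
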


The dimension $d=\dim\teichflat(\mathcal O)$ is easily computed as the sum of the dimensions $d_i\geq1$ of the factors $\O(m_i,\mathds K_i)\backslash\GL(m_i,\mathds K_i)\cong\R^{d_i}$, $1\leq i\leq l$, which are given by:
\begin{equation*}
d_i=\begin{cases}\frac12m_i(m_i+1),&\text{ if }\mathds K_i=\R,\\
m_i^2,&\text{ if }\mathds K_i=\C,\\
m_i(2m_i-1),&\text{ if }\mathds K_i=\H.
\end{cases}
\end{equation*}
In particular, since the holonomy representation of a flat manifold is reducible~\cite{hiss-szczepa}, see Theorem~\ref{thm:redholonomy}, it follows that $l\geq2$, and hence $d\geq2$. This implies the following:

\begin{maincor}
Every flat manifold admits nonhomothetic flat deformations.
\end{maincor}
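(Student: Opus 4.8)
The plan is to read the corollary off directly from Theorem~\ref{mainthm:teichdescription} together with the reducibility of the holonomy representation. Fix a closed flat manifold $M$ (of dimension at least two) with flat metric $g$. First I would apply Theorem~\ref{thm:redholonomy}: the orthogonal holonomy representation of $M$ is reducible, so in the notation of Theorem~\ref{mainthm:teichdescription} it is \emph{not} a single copy of an irreducible. Inspecting the dimension formula for the $d_i$, this already forces $d=\dim\teichflat(M)=\sum_{i=1}^l d_i\geq2$: if there are $l\geq2$ isotypic components this is immediate since each $d_i\geq1$, while if $l=1$ then $W_1$ consists of $m_1\geq2$ copies of one irreducible and $d=d_1\geq\tfrac12 m_1(m_1+1)\geq3$ (respectively $m_1^2\geq4$ or $m_1(2m_1-1)\geq6$, according to $\mathds K_1$). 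Hence Theorem~\ref{mainthm:teichdescription} gives a diffeomorphism $\teichflat(M)\cong\R^d$ with $d\geq2$.

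Next I would isolate the homothetic directions inside $\teichflat(M)$. Rescaling a flat metric by a positive constant is again flat and commutes with pullback by diffeomorphisms, so it induces an $\R_{>0}$-action on $\teichflat(M)$; this action is free, since on a closed manifold a homothety of ratio $\neq1$ cannot be an isometry (it would rescale the total volume) and a fortiori cannot be isotopic to the identity. Under the identification of Theorem~\ref{mainthm:teichdescription} it acts by scalar matrices (a fixed real power of $\lambda$) in each factor of $\prod_i\GL(m_i,\mathds K_i)/\O(m_i,\mathds K_i)$, so its orbits are $1$-dimensional embedded submanifolds of $\teichflat(M)$. Since $d\geq2$, the homothety orbit of $[g]$ is a proper submanifold; choosing any point of $\teichflat(M)$ off this orbit and joining it to $[g]$ by a smooth path in $\teichflat(M)\cong\R^d$ yields a deformation of $g$ through flat metrics that is not contained in a single homothety orbit, i.e. a nonhomothetic flat deformation of $g$.

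I do not expect a genuine obstacle here: the substance is entirely contained in Theorems~\ref{mainthm:teichdescription} and~\ref{thm:redholonomy}, which we are assuming. The two points that deserve care are, first, checking that the homothety action has $1$-dimensional orbits on $\teichflat(M)$ --- so that knowing only $d\geq1$ would \emph{not} suffice and one genuinely needs the reducibility input to raise the dimension to $2$ --- and second, being explicit about what ``deformation'' means, namely a continuous (or smooth) path in the space of flat metrics, for which connectedness of $\teichflat(M)\cong\R^d$ makes the construction of the connecting path automatic.
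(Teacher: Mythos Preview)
Your argument is correct and follows the same strategy as the paper: combine Theorem~\ref{mainthm:teichdescription} with the reducibility result Theorem~\ref{thm:redholonomy} to conclude $d\geq2$, and then observe that the homothety class of a metric accounts for only one dimension in $\teichflat(M)$.

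In fact your treatment is slightly more careful than the paper's own one-line justification. The paper infers directly that reducibility forces $l\geq2$, but this is not quite right as stated: the flat torus $T^n$ has trivial holonomy, so its holonomy representation is $n$ copies of the trivial irreducible and hence $l=1$, $m_1=n$. You correctly split into the two cases $l\geq2$ and $l=1$ with $m_1\geq2$, and verify $d\geq2$ in each. You also make explicit the passage from $d\geq2$ to the existence of nonhomothetic deformations by identifying the homothety orbits as $1$-dimensional submanifolds of $\teichflat(M)$, a step the paper leaves to the reader. The restriction to $\dim M\geq2$ is of course necessary, since $S^1$ has only homothetic flat metrics.
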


The situation is different for flat orbifolds, which can be \emph{rigid}. Examples of orbifolds with irreducible holonomy representation, i.e., $l=1$, which consequently admit no nonhomothetic flat deformations, already appear in dimension $2$: for instance, flat equilateral triangles; see Subsection~\ref{subsec:flat2orbifolds} for more examples.

Since flat orbifolds are locally isometric to Euclidean spaces, the most interesting aspects of their geometry are clearly global. Thus, it is not surprising that issues related to holonomy play a central role in developing this Teichm\"uller theory. As an elementary case illustrating Theorem~\ref{mainthm:teichdescription}, consider the complete absence of holonomy: flat $n$-dimensional tori $T^n$ can be realized as parallelepipeds spanned by linearly independent vectors $v_1,\dots, v_n\in\R^n$, with opposite faces identified. Flat metrics on $T^n$ correspond to different choices of $v_1,\dots, v_n$, up to ambiguities arising from rigid motions in $\R^n$, or relabelings and subdivisions of the parallelepiped into smaller pieces with boundary identifications. More precisely, it is not difficult to see that $\Flat(T^n)=\O(n)\backslash\GL(n,\R)/\GL(n,\Z)$. In this case, $\teichflat(T^n)=\O(n)\backslash\GL(n,\R)\cong\R^{n(n+1)/2}$ is the space of inner products on $\R^n$, and $\Flat(T^n)=\teichflat(T^n)/\GL(n,\Z)$; see Subsection~\ref{subsec:tori} for details.

Isometry classes of collapsed limits of $T^n$ correspond to points in the ideal boundary of $\Flat(T^n)$. A~more tangible object is the ideal boundary of $\teichflat(T^n)$, formed by positive-semidefinite $n\times n$ matrices and stratified by their rank $k$, with $0\leq k< n$, which in a sense correspond to the $k$-dimensional flat tori $T^k$ to which $T^n$ can collapse. Nevertheless, we warn the reader that the Gromov-Hausdorff distance \emph{does not} extend continuously to this boundary. For instance, collapsing the $2$-dimensional square torus along a line of slope $p/q$, with $p,q\in\Z$, $\gcd(p,q)=1$, produces as Gromov-Hausdorff limit the circle $S^1$ of length $(p^2+q^2)^{-1/2}$, while collapsing it along any nearby line with irrational slope produces as limit a single point.

Recall that there are precisely 17 affine classes of flat $2$-orbifolds, corresponding to the 17 \emph{wallpaper groups}, see for instance \cite{davis-notes}. The underlying topological space of $\mathcal O$ is a $2$-manifold $|\mathcal O|$, possibly with boundary; namely, the disk ~$D^2$, sphere $S^2$, real projective plane $\R P^2$, torus $T^2$, Klein bottle $K^2$, cylinder $S^1\times I$, or M\"obius band $M^2$. The singularities that occur in the interior are \emph{cone points}, labeled with a positive integer $n\in\N$, specifying that the local group is the cyclic group $\Z_n\subset\SO(2)$. Singularities that occur on the boundary are \emph{corner reflectors}, labeled by a positive integer $m\in\N$, specifying that the local group is the dihedral group $D_m\subset\O(2)$ of $2m$ elements. Following Davis~\cite{davis-notes}, if a $2$-orbifold $\mathcal O$ has $\ell$ cone points labeled $n_1,\ldots,n_\ell$, and $k$ corner reflectors labeled $m_1,\ldots,m_k$, then it is denoted $|\mathcal O|(n_1,\dots,n_\ell;m_1,\ldots,m_k)$. 

By direct inspection, we verify that only 10 out of the 17 flat orbifolds of dimension 2 (see Table~\ref{table:flat2orbifolds}) arise as Gromov-Hausdorff limits of closed flat 3-manifolds:

\begin{mainthm}\label{mainthm:2orbifolds}
The Gromov-Hausdorff limit of a sequence of closed flat $3$-manifolds is either a closed flat $3$-manifold, or one of the following collapsing cases: point, closed interval, circle, $2$-torus, Klein bottle, M\"obius band, cylinder, disk with singularities $D^2(4;2)$, $D^2(3;3)$, or $D^2(2,2;)$, sphere with singularities $S^2(3,3,3;)$ or $S^2(2,2,2,2;)$, and the real projective plane with singularities $\R P^2(2,2;)$.
\end{mainthm}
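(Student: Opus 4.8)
The strategy is to combine the classification of flat $3$-manifolds with Theorem~\ref{mainthm:limits} and a careful dimension-and-holonomy bookkeeping. A Gromov-Hausdorff limit of closed flat $3$-manifolds is, by Theorem~\ref{mainthm:limits}, a closed flat orbifold $\mathcal O$ of dimension $k$ with $0\le k\le 3$; if $k=3$ the limit must be a flat $3$-manifold (collapse in codimension zero cannot create singularities, since the sequence would be noncollapsing and one invokes the stability/finiteness theory, or more elementarily the limit is again flat of the same dimension and covered by $\R^3$). So the real content is for $k\in\{0,1,2\}$. For $k=0$ the limit is a point; for $k=1$ the only closed flat $1$-orbifolds are $S^1$ and the interval $[0,1]=\R/D_\infty$, both of which occur already from collapsing $3$-tori or $\R^2\rtimes\Z$-type manifolds. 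The bulk of the proof is therefore the case $k=2$: I must determine exactly which of the $17$ flat $2$-orbifolds arise.

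For the $k=2$ case I would proceed in two directions. \emph{Realizability:} exhibit, for each of the $10$ orbifolds listed ($T^2$, $K^2$, $M^2$, $S^1\times I$, $D^2(4;2)$, $D^2(3;3)$, $D^2(2,2;)$, $S^2(3,3,3;)$, $S^2(2,2,2,2;)$, $\R P^2(2,2;)$), an explicit closed flat $3$-manifold collapsing to it. The natural candidates are the flat $3$-manifolds whose Bieberbach group fits in an extension $1\to\Z\to\Gamma\to\Delta\to1$ where $\Delta$ is the wallpaper group of the target $2$-orbifold acting on the $\R^2$-factor; collapsing the circle fiber (shrinking the $\Z$-direction) yields $\mathcal O=\R^2/\Delta$ as Gromov-Hausdorff limit. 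Concretely: the torus $T^3$ collapses to $T^2$; the orientable and non-orientable $S^1$-bundles and the other four orientable flat $3$-manifolds (classes $\mathcal{G}_2,\dots,\mathcal{G}_6$ in the Hantzsche-Wendt/Wolf classification) together with the two non-orientable ones of the form $K^2\times S^1$ and its twisted analogue furnish all the listed targets; one checks that $D^2(4;2)$ comes from the $*442$ wallpaper group, $D^2(3;3)$ from $*333$, $D^2(2,2;)$ from $2*22$, $S^2(3,3,3;)$ from $333$, $S^2(2,2,2,2;)$ from $2222$, and $\R P^2(2,2;)$ from $22\times$, each of which is realized by a flat $3$-manifold fibering over it. \emph{Non-realizability:} show the remaining $7$ wallpaper-group orbifolds cannot appear. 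Here the key tool is the structure of collapse: if closed flat $3$-manifolds $M_j$ collapse to a flat $2$-orbifold $\mathcal O$, then for large $j$ the $M_j$ are, by the Cheeger-Fukaya-Gromov theory applied to the flat (hence nilpotent, indeed abelian up to finite index) situation, circle (or interval) bundles over $\mathcal O$, so $\mathcal O$ must be the base of such a fibration coming from a $3$-dimensional Bieberbach group; equivalently, the holonomy representation of $\mathcal O$ must be a sub-quotient compatible with a $3$-dimensional crystallographic group having a normal $\Z$ acting trivially. The orbifolds $D^2$ (no singularities), $D^2(2;)$, $D^2(;)=D^2$, $S^2(2,2;)$ (bad orbifold — not even a good one, excluded automatically), the orbifolds with a single cone point, and $D^2(2,2,2;)$, $\R P^2(;)$, etc., fail this: either they are bad orbifolds, or their holonomy/crystallographic data do not lift to a flat $3$-manifold extension. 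I would enumerate the $17$ groups using Davis~\cite{davis-notes} and cross off each of the $7$ with a one-line obstruction (bad orbifold, or Euler-characteristic/holonomy incompatibility with a $3$-dimensional Bieberbach extension).

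The main obstacle I anticipate is the non-realizability half: turning the heuristic ``$M_j$ must fiber over $\mathcal O$'' into a rigorous statement and then converting it into a clean arithmetic obstruction for each of the $7$ excluded orbifolds. The fibration statement needs the equivariant version of the collapsing theory (the $M_j$ are all diffeomorphic for large $j$ by finiteness, and the collapse of a flat manifold is along an infranil — here infra-circle or infra-interval — fiber over the limit orbifold), together with the observation that such a fibration endows $\mathcal O$ with the structure of the quotient of $\R^2$ by a group that extends to a $3$-dimensional Bieberbach group with a normal infinite cyclic subgroup; one then lists which $2$-dimensional crystallographic groups occur this way. An alternative, more hands-on route that avoids heavy collapsing theory is to use Theorem~\ref{mainthm:limits}'s converse only for the realizability direction, and for non-realizability to argue directly: a $2$-orbifold limit forces the holonomy group of the $M_j$ to surject onto (a finite-index-closed subgroup of) the holonomy of $\mathcal O$, and the finitely many flat $3$-manifolds have holonomy groups among the finite list $\{1,\Z_2,\Z_3,\Z_4,\Z_6,\Z_2\times\Z_2\}$ (orientable) together with their non-orientable companions, so the $2$-orbifold's point groups are constrained to exactly the ones appearing in the $10$ listed cases; the $7$ excluded orbifolds have point groups (e.g.\ $\Z_2\times\Z_2\times\Z_2$, or dihedral groups not embeddable as a quotient of a $3$-dimensional point group with the requisite invariant vector) that simply do not arise. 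Carrying out this finite case-check carefully is routine but lengthy, and is where the ``direct inspection'' mentioned in the statement does its work.
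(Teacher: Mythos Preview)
Your overall architecture (realizability plus non-realizability) is reasonable, but the non-realizability half has genuine gaps, and the paper's approach sidesteps them entirely.

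\textbf{The main gap.} Your proposed obstructions for the seven excluded $2$-orbifolds do not work as stated. First, ``bad orbifold'' is irrelevant here: all $17$ wallpaper quotients are good orbifolds, so none is excluded on that ground. Second, the holonomy-matching argument is insufficient. For instance, $D^2(;2,2,2,2)$ and $D^2(2;2,2)$ have holonomy $D_2\cong\Z_2\oplus\Z_2$, which \emph{is} the holonomy of the flat $3$-manifolds $\mathcal G_6$, $\mathcal B_3$, $\mathcal B_4$; and $S^2(2,3,6;)$ has holonomy $\Z_6$, which \emph{is} the holonomy of $\mathcal G_5$. Yet none of these three $2$-orbifolds arises as a limit. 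So ``the $2$-orbifold's point group must be a quotient of some $3$-dimensional Bieberbach holonomy'' is necessary but far from sufficient; the specific way the holonomy acts on the collapsing and surviving directions matters. Your fallback (list which wallpaper groups $\Delta$ sit in $1\to\Z\to\Gamma\to\Delta\to1$ with $\Gamma$ torsion-free crystallographic in dimension $3$) would work, but you have not carried it out, and it is essentially the paper's computation reorganized.

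\textbf{Factual errors.} Several of your wallpaper-group attributions are wrong: $D^2(4;2)$ is p4g ($4{*}2$), not ${*}442$; $D^2(3;3)$ is p31m ($3{*}3$), not ${*}333$; $D^2(2,2;)$ is pmg ($22{*}$), not $2{*}22$. These misidentifications would derail any fibration-based argument.

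\textbf{How the paper proceeds.} The paper avoids a separate non-realizability step altogether. Using Proposition~\ref{prop:limitaction}, any limit of flat $3$-manifolds with holonomy $H$ is of the form $T^m/H$ for the specific $H$-action induced by the collapsed lattice. The paper then runs through the ten affine classes $\mathcal G_1,\dots,\mathcal G_6,\mathcal B_1,\dots,\mathcal B_4$ one by one, decomposes $\R^3$ into $H_\pi$-isotypic components, and for each collapsing direction identifies the quotient $T^m/H$ explicitly (the $2$-dimensional cases are recognized by finding fundamental domains and boundary identifications, detailed in Appendix~\ref{sec:appflat2orbs}). The list in the theorem is then simply the union of what appears; the seven missing orbifolds are excluded because they never show up in this exhaustive enumeration. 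This is the ``direct inspection'' alluded to in the introduction, and it is both shorter and more reliable than trying to manufacture an a priori obstruction.
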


The remaining 7 flat $2$-orbifolds must arise as Gromov-Hausdorff limits of flat manifolds of dimensions~$\geq4$. For example, it is easy to see that a flat rectangle $D^2(;2,2,2,2)$ can be realized as the collapsed limit of a product $K^2\times K^2$ of Klein bottles. However, there does not seem to be a readily available method to decide what is the \emph{lowest dimension} $N(\mathcal O)$ of a flat manifold that collapses to a given flat orbifold $\mathcal O$. Heuristically, we expect that an upper bound for $N(\mathcal O)$ could be derived from the structure of the singular set of $\mathcal O$. For~instance, adding one dimension for each cone point, one could expect to replace the rotation that fixes the cone point with a ``screw motion" having a translation component in this new direction, which hence acts freely and \emph{resolves} the cone singularity.

Another method to estimate $N(\mathcal O)$, following the proof of Theorem~\ref{mainthm:limits}, is to estimate the lowest dimension of a flat manifold with prescribed holonomy group $H$ isomorphic to the holonomy group of $\mathcal O$. Such a flat manifold always exists by a result of Auslander and Kuranishi~\cite{auslanderkuranishi57}, see Theorem~\ref{thm:AKthm}, and determining its lowest dimension is a well-known open question, see Szczepa{\'n}ski~\cite[Problem~1]{sczcproblems}. An answer to this problem, and consequently an upper estimate for $N(\mathcal O)$, is available when the group $H$ is cyclic, an elementary abelian $p$-group, dihedral, semidihedral, a generalized quaternion group, or a simple group $\mathsf{PSL}(2,p)$ with $p$ prime. 
According to Szczepa{\'n}ski~\cite{sczcproblems}, the difficulty in establishing more comprehensive results in this direction is related to the difficulty in computing the second group cohomology of $H$ with special coefficients.

There are several other questions related to the results in this paper, two of which we would like to emphasize. The first is to characterize algebraically which isotypic components of the holonomy group of a flat manifold produce as Gromov-Hausdorff limit another flat manifold (instead of a singular flat orbifold) when collapsed. The second is to what extent our results generalize to the class of \emph{almost flat manifolds}.

This paper is organized as follows. Preliminary facts about closed flat manifolds and flat orbifolds are recalled in Section~\ref{sec:prelims}, including the Bieberbach Theorems and the classification of flat $2$-orbifolds. Limits of closed flat manifolds are studied in Section~\ref{sec:sequences}, which contains the proof of Theorem~\ref{mainthm:limits}. Section~\ref{sec:algdescr} deals with Teichm\"uller spaces and moduli spaces of flat metrics, establishing the algebraic characterization given by Theorem~\ref{mainthm:teichdescription}.
Several examples of Teichm\"uller spaces that can be computed by applying Theorem~\ref{mainthm:teichdescription} are discussed in Section~\ref{sec:exa}. Section~\ref{sec:examples} contains the classification of limits of flat $3$-manifolds and the proof of Theorem~\ref{mainthm:2orbifolds}. Finally, Appendix~\ref{sec:appflat2orbs} contains details on how to recognize flat $2$-orbifolds given as quotients of $\R^2$ by crystallographic groups, which are used in Section~\ref{sec:examples}, and Appendix~\ref{sec:appendixB} gives an alternative elementary proof that Gromov-Hausdorff limits of flat tori are (lower dimensional) flat tori.

\subsection*{Acknowledgements}
It is a pleasure to thank Alexander Lytchak, John Harvey, Karsten Grove, and Curtis Pro for suggestions regarding equivariant Gromov-Hausdorff convergence, Burkhard Wilking for comments on realizing flat orbifolds as limits of flat manifolds, and Andrzej Szczepa{\'n}ski for conversations about the holonomy representation of flat manifolds. Part of this work was done during a visit of the first named author to the Max Planck Institute for Mathematics in Bonn, Germany, and of the third named author to the University of Notre Dame, USA; they would like to thank these institutions for providing excellent working conditions.
The second and third named authors are partially supported by a FAPESP-OSU 2015 Regular Research Award (FAPESP grant: 2015/50315-3).

\section{Flat manifolds and flat orbifolds}\label{sec:prelims}

In this section, we recall basic facts about closed flat manifolds and flat orbifolds.

\subsection{Orbifolds}
A \emph{Riemannian orbifold} $\mathcal O$ is a metric space which is locally isometric to orbit spaces of isometric actions of finite groups on Riemannian manifolds. Geometric properties, such as curvature, may be defined via these local isometries. Following Thurston~\cite{thurston}, $\mathcal O$ is called \emph{good} if it is globally isometric to such an orbit space. Any orbifold $\mathcal O$ has a \emph{universal orbifold covering} $\widetilde{\mathcal O}$, with a discrete isometric action by deck transformations of its \emph{orbifold fundamental group} $\pi_1^{orb}(\mathcal O)$. 
The orbifold $\mathcal O$ has non-empty boundary (as an Alexandrov space) if and only if $\pi_1^{orb}(\mathcal O)$ contains a reflection, that is, an involution with fixed point set of codimension $1$.

Every Riemannian orbifold $\mathcal O$ of dimension $n$ has a frame bundle $\operatorname{Fr}(\mathcal O)$, which is a (smooth) Riemannian manifold with an almost free isometric $\O(n)$-action whose orbit space is $\operatorname{Fr}(\mathcal O)/\O(n)\cong\mathcal O$. In particular, it follows that every Riemannian orbifold is the (continuous) Gromov-Hausdorff limit of Riemannian manifolds $\{(\operatorname{Fr}(\mathcal O),\g_t)\}_{t\geq0}$, where $\g_t$ is the Cheeger deformation of some invariant metric with respect to the $\O(n)$-action, see \cite[\S 6.1]{mybook}. 
For details on the basic geometry and topology of orbifolds, see \cite{bh-book,davis-notes,kleiner-lott,thurston}.

\begin{remark}\label{rem:cheegerdef}
At first sight, since Cheeger deformations preserve $\sec\geq0$, the above facts may seem to provide an approach to solving the question in the Introduction about realizing an orbifold $\mathcal O$ with $\sec\geq0$ as a limit of manifolds with $\sec\geq0$. However, it is in general difficult to endow $\mathrm{Fr}(\mathcal O)$ with $\sec\geq0$. This problem is related to the well-known \emph{converse question} to the Soul Theorem of Gromoll and Meyer, of which vector bundles over closed manifolds with $\sec\geq0$ admit metrics with $\sec\geq0$. 

In the very special case of a spherical orbifold $\mathcal O=S^n/\Gamma$, such as the spherical suspension of $\R P^2$, the frame bundle $\mathrm{Fr}(\mathcal O)=\O(n+1)/\Gamma$ clearly admits $\sec\geq0$. Thus, Cheeger deformations allow to approximate~$\mathcal O$ by manifolds with $\sec\geq0$. However, it is unclear whether that can be done keeping the \emph{same optimal lower curvature bound} $\sec\geq1$ of the limit $\mathcal O$.
\end{remark}

\subsection{Bieberbach Theorems}\label{sub:BiebGroups}
A discrete group $\pi$ of isometries of $\R^n$ is called \emph{crystallographic} if it has compact fundamental domain in $\R^n$, so that $\mathcal O=\R^n/\pi$ is a closed flat orbifold. A torsion-free crystallographic group $\pi$ is called a \emph{Bieberbach group}, and in this case the action of $\pi$ on $\R^n$ is free, so $M=\R^n/\pi$ is a closed flat manifold. 
Conversely, by the Killing-Hopf Theorem, it is well-known that if a closed manifold $M$ of dimension $n\geq2$ carries a flat Riemannian metric, then its universal covering is $\R^n$ and its fundamental group is isomorphic to a Bieberbach group. Similarly, by a result of Thurston~\cite{thurston} (see~\cite{good-orbifolds}), if a closed Riemannian orbifold $\mathcal O$ of dimension $n\geq2$ is flat, then it is good, its universal orbifold covering is $\R^n$ and its orbifold fundamental group is isomorphic to a crystallographic group.
In all of these cases, we denote by $\g_\pi$ the flat metric for which the quotient map $(\R^n,\g_\text{flat})\to(\R^n/\pi,\g_\pi)$ is a Riemannian covering.

Denote by $\aff(\R^n):=\GL(n)\ltimes\R^n$ the group of affine transformations of $\R^n$, and by $\Iso(\R^n):=\O(n)\ltimes\R^n$ be the subgroup of rigid motions, that is, isometries of the Euclidean space $(\R^n,\g_\text{flat})$.
We write elements of $\aff(\R^n)$ and $\Iso(\R^n)$ as pairs $(A,v)$, with $A\in\GL(n)$ or $\O(n)$, and $v\in\R^n$. The group operation is given by
\begin{equation}\label{eq:gpop}
(A,v)\cdot (B,w)=(AB, Aw+v),
\end{equation}
and clearly $(A,v)^{-1}=(A^{-1},-A^{-1}v)$. The natural action of these groups on $\R^n$ is given by $(A,v)\cdot w=Aw+v$.

Consider the projection homomorphism:
\[\rot:\aff(\R^n)\longrightarrow\GL(n), \quad \rot(A,v)=A.\]
Given a crystallographic group $\pi\subset\Iso(\R^n)$, its image $H_\pi:=\rot(\pi)$ is called the \emph{holonomy} of $\pi$.
Let $L_\pi$ denote the kernel of the restriction of $\rot$ to $\pi$, so that we have a short exact sequence
\begin{equation}\label{eq:exsecBieber}
1\longrightarrow L_\pi\longrightarrow\pi\longrightarrow H_\pi\longrightarrow 1.
\end{equation}
The group $L_\pi$, which consists of elements of the form $(\id,v)$, with $v\in\R^n$, is the maximal normal abelian subgroup of $\pi$ and is naturally identified with a subgroup of $\R^n$.

The \emph{Bieberbach Theorems}~\cite{bieberbach,bieberbach2}, see also \cite{buser-bieber,charlap,szczepa-book}, provide the essential facts about the groups in \eqref{eq:exsecBieber} and have equivalent algebraic and geometric formulations:

\begin{bthm}[\sc Algebraic version]
The following hold:
\begin{enumerate}[\rm I.]
\item If $\pi\subset\Iso(\R^n)$ is a crystallographic group, then $H_\pi$ is finite and $L_\pi$ is a lattice that spans $\R^n$.
\item Let $\pi,\pi'\subset\Iso(\R^n)$ be crystallographic subgroups. If there exists an isomorphism $\phi\colon\pi\to\pi'$, then $\phi$ is a conjugation in $\aff(\R^n)$, i.e., there exists $(A,v)\in\aff(\R^n)$ such that $\phi(B,w)=(A,v)\cdot (B,w)\cdot (A,v)^{-1}$ for all $(B,w)\in\pi$.
\item For all $n$, there are only finitely many isomorphism classes of crystallographic subgroups of $\Iso(\R^n)$.
\end{enumerate}
\end{bthm}

\begin{bthm}[\sc Geometric version]
The following hold:
\begin{enumerate}[\rm I.]
\item If $(\mathcal O,\g)$ is a closed flat orbifold with $\dim \mathcal O=n$, then $(\mathcal O,\g)$ is covered by a flat torus of dimension $n$, and the covering map is a local isometry.
\item If $\mathcal O$ and $\mathcal O'$ are closed flat orbifolds of the same dimension with isomorphic fundamental groups, then $\mathcal O$ and $\mathcal O'$ are affinely equivalent.
\item For all $n$, there are only finitely many affine equivalence classes of closed flat orbifolds of dimension $n$.
\end{enumerate}
\end{bthm}

The list of (affine equivalence classes of) closed flat orbifolds of dimension $n$, which, by the above, is in bijective correspondence with the list of (affine conjugate classes of) crystallographic groups in $\Iso(\R^n)$, is known for some small values of $n$:
\begin{itemize}
\item If $n=2$, there are 17 examples, corresponding to the 17 wallpaper groups. They are listed in Table~\ref{table:flat2orbifolds}, using the notation for cone points and corner reflectors as in the Introduction (following Davis~\cite{davis-notes}). The corresponding wallpaper groups $\pi$ are identified by their crystallographic notation, followed by Conway's notation~\cite{conway} in parenthesis, and their holonomy group $H_\pi$ is also indicated;
\begin{table}[ht]
 \begin{center}
  \caption{Flat $2$-dimensional orbifolds}\label{table:flat2orbifolds}
  \begin{tabular}{rlllll}
    \hline\noalign{\smallskip}
$\#$   & Orbifold $\R^2/\pi$ & Topology & Geometry & $\pi$ & $H_\pi$
      \\ \noalign{\smallskip}\hline
      \noalign{\smallskip}
\noalign{\smallskip}
(1) & $D^2(;3,3,3)$ & $D^2$ & equilateral triangle& p3m1 ($^*333$) & $D_3$\\
\noalign{\smallskip}
(2) & $D^2(;2,3,6)$ & $D^2$ & triangle with angles $\frac{\pi}{2}$, $\frac{\pi}{3}$, $\frac{\pi}{6}$& p6m ($^*632$) & $D_6$\\
\noalign{\smallskip}
(3)& $D^2(;2,4,4)$ & $D^2$ & triangle with angles $\frac{\pi}{2}$, $\frac{\pi}{4}$, $\frac{\pi}{4}$& p4m ($^*442$) & $D_4$ \\
\noalign{\smallskip}
(4)& $D^2(;2,2,2,2)$ & $D^2$ & rectangle& pmm ($^*2222$) & $D_2$ \\
\noalign{\smallskip}
(5)  & $D^2(2;2,2)$ & $D^2$ &  \begin{tabular}{@{}l@{}}quotient of a square by group $\Z_2$ generated \\ by the rotation of $\pi$ around its center \end{tabular}& cmm ($2^*22$) & $D_2$\\
\noalign{\smallskip}
(6)  & $D^2(4;2)$ & $D^2$ & \begin{tabular}{@{}l@{}} quotient of a square by group $\Z_4$ generated \\ by the rotation of $\frac\pi2$ around its center \end{tabular}& p4g ($4^*2$) & $D_4$\\
\noalign{\smallskip}
(7) & $D^2(3;3)$ & $D^2$ &  \begin{tabular}{@{}l@{}}quotient of an equilateral triangle by group $\Z_3$ \\  generated by the rotation of $\frac\pi3$ around its center \end{tabular}& p31m ($3^*3$) & $D_3$ \\
\noalign{\smallskip}
(8) & $D^2(2,2;)$ & $D^2$ & \begin{tabular}{@{}l@{}} \emph{half pillowcase}: quotient of $S^2(2,2,2,2;)$ by \\ reflection about the equator \end{tabular}& pmg ($22^*$) & $D_2$ \\
\noalign{\smallskip}
(9) & $S^2(2,2,2,2;)$ & $S^2$ & \emph{pillowcase}: Alexandrov double of rectangle & p2 ($2222$) & $D_2$ \\
\noalign{\smallskip}
(10)  & $S^2(3,3,3;)$ & $S^2$ & \emph{$333$-turnover}: Alexandrov double of $D^2(;3,3,3)$& p3 ($333$) & $\Z_3$ \\
\noalign{\smallskip}
(11) & $S^2(2,3,6;)$ & $S^2$ & \emph{$236$-turnover}: Alexandrov double of $D^2(;2,3,6)$& p6 ($632$) & $\Z_6$ \\
\noalign{\smallskip}
(12) & $S^2(2,4,4;)$ & $S^2$ & \emph{$244$-turnover}: Alexandrov double of $D^2(;2,4,4)$ & p4 ($442$) & $D_4$ \\
\noalign{\smallskip}
(13) & $\R P^2(2,2;)$ & $\R P^2$ & quotient of $S^2(2,2,2,2;)$ by antipodal map & pgg ($22\times$)  & $D_2$\\
\noalign{\smallskip}
(14)  & $T^2$ & $T^2$ & $2$-torus& p1 ($\circ$) & $\{1\}$\\
\noalign{\smallskip}
(15)  & $K^2$ & $K^2$ & Klein bottle & pg ($\times\times$) & $\Z_2$ \\
\noalign{\smallskip}
(16) & $S^1\times I$ & $S^1\times I$ & Cylinder & pm (${^*}{^*}$) & $\Z_2$ \\
\noalign{\smallskip}
(17) & $M^2$ & $M^2$ & M\"obius band & cm ($^*\times$) & $\Z_2$  \\ \noalign{\smallskip}\hline
\end{tabular}
\end{center}
\end{table}

\item If $n=3$, there are 219 examples, corresponding to the 219 space groups classified (independently) by Barlow, Fedorov, and Sch\"onflies, in the 1890s;
\item If $n=4$, there are 4,783 examples classified by Brown, B\"ulow, Neub\"user, Wondratschek, and Zassenhaus~\cite{crystalbook};
\item If $n=5$ and $n=6$, there are respectively 222,018 and 28,927,922 examples, obtained with the computer program CARAT, see Plesken and Schulz~\cite{plesken-schulz}.
\end{itemize}
The sublist of (affine equivalence classes of) closed flat manifolds of dimension $n$ is also known for the above values of $n$, and is considerably shorter:
\begin{itemize}
\item If $n=2$, there are 2 examples: the torus $T^2$ and the Klein bottle $K^2$;
\item If $n=3$, there are 10 examples, obtained by Hantzsche and Wendt~\cite{hantwend}, see Wolf~\cite[Thm.~3.5.5, 3.5.9]{Wolfbook};
\item If $n=4$, there are 74 examples, obtained by Calabi, see Wolf~\cite[Sec.\ 3.6]{Wolfbook};
\item If $n=5$ and $n=6$, there are respectively 1,060 and 38,746 examples, obtained with the computer program CARAT, see Cid and Schulz~\cite{cids}.
\end{itemize}

\subsection{Holonomy group}
Let $\pi\subset\Iso(\R^n)$ be a crystallographic group and consider its holonomy group $H_\pi$.

\begin{lemma}\label{lem:lpiinv}
The lattice $L_\pi\subset\R^n$ is invariant under the orthogonal action of $H_\pi$.
\end{lemma}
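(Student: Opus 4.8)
The plan is to show that for any $h\in H_\pi$ and any $(\id,v)\in L_\pi$, the element $(h,0)\cdot(\id,v)\cdot(h,0)^{-1}$ lies in $L_\pi$, which forces $hv$ (the vector part of this conjugate) to belong to the lattice $L_\pi\subset\R^n$; since $h$ acts orthogonally, this says exactly that $L_\pi$ is $H_\pi$-invariant. The subtlety is that $(h,0)$ need not itself lie in $\pi$: the short exact sequence \eqref{eq:exsecBieber} need not split. So the first step is to lift: pick any $g=(h,w)\in\pi$ with $\rot(g)=h$ (possible since $h\in H_\pi=\rot(\pi)$).

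The second step is the conjugation computation. Using the group law \eqref{eq:gpop}, compute
\begin{equation*}
g\cdot(\id,v)\cdot g^{-1}=(h,w)\cdot(\id,v)\cdot(h^{-1},-h^{-1}w)=(\id,\,hv).
\end{equation*}
The key point is that $L_\pi$ is \emph{normal} in $\pi$ (it is the kernel of $\rot|_\pi$, hence normal; indeed the excerpt notes it is the maximal normal abelian subgroup), so this conjugate lies in $L_\pi$. Under the identification of $L_\pi$ with a subgroup of $\R^n$, this says $hv\in L_\pi$. Since $v\in L_\pi$ and $h\in H_\pi$ were arbitrary, $h(L_\pi)\subseteq L_\pi$; applying the same to $h^{-1}$ gives equality, so $L_\pi$ is invariant under the orthogonal $H_\pi$-action.

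There is really no serious obstacle here — the only thing one must be careful about is precisely the point raised above, namely not to assume $(h,0)\in\pi$. Once one works with an honest lift $g\in\pi$, the normality of $L_\pi$ does all the work, and the fact that $w$ disappears from the vector part of the conjugate (the computation gives $hv+ (w - h h^{-1} w) = hv$) makes the identification with the orthogonal action transparent. One could equivalently phrase this as: the extension \eqref{eq:exsecBieber} endows $L_\pi$ with an $H_\pi$-module structure via conjugation, and unwinding the definitions shows this module action is the restriction of the standard orthogonal action of $H_\pi\subset\O(n)$ on $\R^n$.
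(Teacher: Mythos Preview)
Your proof is correct and is essentially the same as the paper's: both arguments lift $h\in H_\pi$ to an element $(h,w)\in\pi$ and then use normality of $L_\pi=\ker(\rot|_\pi)$ together with the semidirect-product multiplication to see that the $H_\pi$-action on $L_\pi$ by conjugation is exactly the orthogonal action. The only cosmetic difference is that the paper phrases normality as $(A,v)\cdot(\id,w)=(\id,w')\cdot(A,v)$ rather than as $g(\id,v)g^{-1}\in L_\pi$, and stops at the inclusion $H_\pi(L_\pi)\subset L_\pi$ (your extra remark about $h^{-1}$ to get equality is fine but not needed, since $H_\pi$ is a finite group).
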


\begin{proof}
This is an easy consequence of normality of $L_\pi$ in $\pi$. Namely, given $w\in L_\pi$ and $A\in H_\pi$, let $v\in\R^n$ be such that $(A,v)\in\pi$. Then, by normality, there exists $w'\in L_p$ such that $(A,Aw+v)=(A,v)\cdot(\id, w)=(\id, w')\cdot(A,v)=(A,w'+v)$, i.e., $Aw=w'$. Thus, $H_\pi(L_\pi)\subset L_\pi$.
\end{proof}

If $\pi\subset\Iso(\R^n)$ is a Bieberbach group, then the orthogonal representation of its holonomy $H_\pi$ on $\R^n$ is identified with the holonomy representation of the flat manifold $M=\R^n/\pi$.
In particular, notice that any two flat metrics on $M$ have isomorphic holonomy groups. Furthermore, the Betti numbers of $M$ are given by $b_k(M)=\dim(\wedge^k\R^n)^{H_\pi}$, that is, the dimension of the subspace of $\wedge^k\R^n$ fixed by the induced orthogonal representation of $H_\pi$.

The closed flat manifold $M=\R^n/\pi$ can also be seen as the orbit space of a free isometric action of $H_\pi$ on the flat torus $\R^n/L_\pi$. Namely, since $L_\pi$ is normal in $\pi$, the projection map $\R^n/L_\pi\to M$ is a regular (Riemannian) covering whose group of deck transformations is identified with $\pi/L_\pi\cong H_\pi$. In particular, it follows that $\Vol(M,\g_\pi)=\vert H_\pi\vert^{-1}\,\Vol(\R^n/L_\pi)$. 

The free action of $H_\pi$ on $\R^n/L_\pi$ with orbit space $M$ can be described explicitly.
First, note that since $L_\pi$ is $H_\pi$-invariant (Lemma~\ref{lem:lpiinv}), the natural action of $H_\pi$ on $\R^n$ descends to an action of $H_\pi$ on the torus $\R^n/L_\pi$. Given $A\in H_\pi$, denote by $\overline A\colon \R^n/L_\pi\to\R^n/L_\pi$ the induced map, and let $v\in\R^n$ be such that $(A,v)\in \pi$. Such $v$ is unique, up to translations in $L_\pi$, for if $v,v'\in\R^n$ are such that $(A,v),(A,v')\in\pi$, then $(A,v)\cdot(A,v')^{-1}=(\id, v-v')\in L_\pi$, i.e., $v-v'\in L_\pi$. We thus have a map $H_\pi\ni A\mapsto v_A\in\R^n/L_\pi$, defined by $v_A:=v+L_\pi$, where $(A,v)\in\pi$. Clearly, $v_{A\cdot A'}=\overline A(v_{A'})+v_A$ for all $A,A'\in H_\pi$, where $+$ is the group operation on $\R^n/L_\pi$.
With this notation, the free action of $H_\pi$ on $\R^n/L_\pi$ is:
\[\phantom{,\qquad A\in H_\pi,\,\overline x\in\R^n/L_\pi.}A\cdot \overline x:=\overline A(\overline x)+v_A,\qquad A\in H_\pi,\, \overline x\in\R^n/L_\pi.\]

We conclude this with two very useful results about holonomy groups $H_\pi$. First, by a celebrated theorem of Auslander and Kuranishi~\cite{auslanderkuranishi57}, see also Wolf~\cite{Wolfbook}, there are no obstructions on $H_\pi$; more precisely:

\begin{theorem}\label{thm:AKthm}
Any finite group is the holonomy group $H_\pi$ of a closed flat manifold.
\end{theorem}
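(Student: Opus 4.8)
The plan is to reduce the Auslander--Kuranishi theorem to a cohomological construction of the required Bieberbach group. Given a finite group $H$, I first seek a faithful integral representation of $H$, i.e., a $\Z$-lattice $L$ of some rank $n$ on which $H$ acts faithfully; this provides the candidate for the translation lattice $L_\pi$ together with its $H$-action, which by Lemma~\ref{lem:lpiinv} is forced to be orthogonal after choosing an $H$-invariant inner product on $L\otimes\R=\R^n$ (average any inner product over the finite group $H$). Such a faithful $\Z[H]$-module always exists: take the regular representation $\Z[H]$ itself. The remaining task is then to produce an extension
\begin{equation*}
1\longrightarrow L\longrightarrow \pi\longrightarrow H\longrightarrow 1
\end{equation*}
classified by a cohomology class in $H^2(H;L)$ such that the resulting crystallographic group $\pi\subset\Iso(\R^n)$ is \emph{torsion-free}, hence Bieberbach, so that $M=\R^n/\pi$ is the desired closed flat manifold with holonomy $H$.

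The core of the argument is controlling torsion. An extension class $\alpha\in H^2(H;L)$ gives a torsion-free group exactly when for every nontrivial element $h\in H$ of prime order $p$, the restriction of $\alpha$ to the cyclic subgroup $\langle h\rangle$ is nonzero in $H^2(\langle h\rangle;L)$; indeed, torsion in $\pi$ would, after conjugation, produce a finite subgroup mapping isomorphically onto a subgroup of $H$, and it suffices to obstruct lifts of cyclic subgroups of prime order. So the strategy is: (i) enlarge the module $L$ if necessary so that for each prime-order cyclic subgroup $C\leq H$ the restriction map $H^2(H;L)\to H^2(C;L)$ hits a nonzero class, and (ii) choose $\alpha$ whose restriction to each such $C$ is simultaneously nonzero. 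A clean way to arrange (i) is to replace $L$ by a large direct sum $\bigoplus \Z[H]$ of copies of the regular representation, or more precisely to use a coinduced/induced module for which $H^2$ can be computed by Shapiro's lemma and shown to restrict onto the relevant cyclic cohomology; one then picks $\alpha$ component by component. The finitely many conditions (one per conjugacy class of prime-order cyclic subgroup) can be met simultaneously by summing the corresponding modules, since restriction is additive.

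After fixing such an $\alpha$, the group $\pi$ it classifies embeds in $\Iso(\R^n)$: elements are pairs $(A,v)$ with $A=\rho(h)$ for the orthogonal representation $\rho\colon H\to\O(n)$ and $v$ determined modulo $L$ by a set-theoretic section realizing the cocycle, with group law \eqref{eq:gpop}. Discreteness and cocompactness are automatic because $L$ is a full-rank lattice of finite index in $\pi$, so $\pi$ is crystallographic; torsion-freeness is exactly what the restriction conditions on $\alpha$ guarantee. By the Bieberbach Theorems (Section~\ref{sub:BiebGroups}), $\pi$ is then the fundamental group of the closed flat manifold $M=\R^n/\pi$, and by construction $H_\pi=\rot(\pi)=\rho(H)\cong H$.

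The main obstacle is step (i)--(ii): ensuring the chosen extension class restricts nontrivially to \emph{every} prime-order cyclic subgroup at once. For a single cyclic group $C=\Z_p$ acting on a module $N$, $H^2(\Z_p;N)\cong N^{C}/\!\operatorname{Nm}(N)$ (Tate cohomology in even degree), which is nonzero provided the norm map is not surjective onto the invariants — and this can always be forced by choosing $N$ to contain a suitable summand, e.g. $\Z$ with trivial $C$-action gives $H^2(\Z_p;\Z)\cong\Z_p\neq0$. The subtlety is that a subgroup may contain several non-conjugate prime-order elements and that restriction from $H$ need not be surjective; the fix is to use induced modules $\operatorname{Ind}_C^H\Z$ and invoke Shapiro, $H^2(H;\operatorname{Ind}_C^H\Z)\cong H^2(C;\Z)$, so that the generator of $H^2(C;\Z)$ lifts to an $H$-class whose restriction to $C$ is nonzero; taking $L=\bigoplus_{C} \operatorname{Ind}_C^H\Z$ over representatives $C$ of conjugacy classes of prime-order cyclic subgroups (and, if faithfulness fails, adding one copy of $\Z[H]$) then yields a module and a class $\alpha=\sum_C \alpha_C$ doing everything simultaneously. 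Verifying that a nonzero restriction to a representative $C$ forces nonzero restriction to all conjugates, and that this genuinely kills all torsion, is the delicate bookkeeping that the full proof must carry out.
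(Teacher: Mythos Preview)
The paper does not actually prove this theorem: it is stated as the Auslander--Kuranishi result and cited to~\cite{auslanderkuranishi57} and~\cite{Wolfbook}, with no argument given. So there is no ``paper's proof'' to compare your proposal against.

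That said, your sketch is essentially the standard cohomological proof (as in Charlap~\cite[Ch.~III]{charlap}), and it is correct. A few remarks on the points you flag as delicate:
\begin{itemize}
\item Your torsion criterion is right: a torsion element of $\pi$ of prime order $p$ maps isomorphically onto a subgroup $C\cong\Z_p$ of $H$ (since $L$ is torsion-free), so the restricted extension over $C$ splits, i.e.\ $\mathrm{res}^H_C(\alpha)=0$; conversely a splitting over any $C$ of prime order produces torsion. No conjugation is needed here.
\item For the Shapiro step, since $[H:C]<\infty$ one has $\operatorname{Ind}_C^H\Z\cong\operatorname{Coind}_C^H\Z$, and the Shapiro isomorphism $H^2(H;\operatorname{Ind}_C^H\Z)\cong H^2(C;\Z)$ factors \emph{through} $\mathrm{res}^H_C$ followed by the counit. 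Hence the class $\alpha_C$ corresponding to a generator of $H^2(C;\Z)\cong\Z_p$ automatically has $\mathrm{res}^H_C(\alpha_C)\neq 0$.
\item Passing to conjugates is immediate: inner automorphisms act trivially on $H^*(H;L)$, so $\mathrm{res}^H_{gCg^{-1}}(\alpha)$ is the image of $\mathrm{res}^H_C(\alpha)$ under the conjugation isomorphism and is nonzero iff the latter is. With $L=\bigoplus_C\operatorname{Ind}_C^H\Z$ over conjugacy-class representatives and $\alpha=\sum\alpha_C$, the $L_C$-summand of $\mathrm{res}^H_{C}(\alpha)$ is already nonzero, so the direct-sum decomposition of $H^2(C;L)$ finishes it.
\item The faithfulness patch is also fine: adding a summand $\Z[H]$ does not disturb the argument since $H^i(H;\Z[H])=0$ for $i>0$, so the extension class lives entirely in the other summands.
\item Finally, the abstract extension embeds in $\Iso(\R^n)$ because $H^2(H;\R^n)=0$ (as $|H|$ is invertible in $\R$), so the $L$-valued cocycle becomes a coboundary over $\R^n$ and furnishes the translational parts $w(h)$.
\end{itemize}
In short: your plan is the right one and would go through with routine bookkeeping; it is precisely the argument the paper is outsourcing to the references.
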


Second, Hiss and Szczepa\'nski~\cite{hiss-szczepa} established the following remarkable result:

\begin{theorem}\label{thm:redholonomy} 
For any Bieberbach group $\pi\subset\Iso(\R^n)$, the orthogonal action of~$H_\pi$ on $\R^n$ is reducible.
\end{theorem}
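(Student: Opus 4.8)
The plan is to argue by contraposition, assuming throughout that $n\geq 2$: supposing the holonomy representation $\rho\colon H_\pi\to\O(n)$ is irreducible, I will produce a nontrivial element of finite order in $\pi$, contradicting torsion-freeness. The natural bookkeeping device is the extension \eqref{eq:exsecBieber}, which (by Lemma~\ref{lem:lpiinv}, the $H_\pi$-action on $L_\pi$ being the restriction of $\rho$) is classified by a class $\alpha\in H^2(H_\pi;L_\pi)$. The relevant tool is the standard torsion criterion for such extensions: $\pi$ is torsion-free if and only if $\operatorname{res}_C\alpha\neq 0$ in $H^2(C;L_\pi)$ for every subgroup $C\leq H_\pi$ of prime order. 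Indeed, a prime-order element of $\pi$ maps isomorphically onto the cyclic subgroup of $H_\pi$ it generates (since $L_\pi$ is torsion-free), i.e.\ it is exactly a splitting of the extension restricted to that subgroup. Thus it suffices to exhibit one prime-order $C$ on which $\alpha$ restricts trivially.

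First I would eliminate elements without fixed vectors. For $g\in H_\pi$ set $A=\rho(g)$ of order $m$, and pick $v$ with $(A,v)\in\pi$. Iterating \eqref{eq:gpop} gives $(A,v)^m=(\id,N_Av)$, where $N_A=\id+A+\dots+A^{m-1}=m\,P_A$ and $P_A$ is the orthogonal projection onto $\operatorname{Fix}(A)=\ker(A-\id)$. Hence, if some $g\neq\id$ satisfies $\operatorname{Fix}(\rho(g))=0$, then $N_Av=0$, so $(A,v)$ is a nontrivial element of finite order and $\pi$ has torsion. Equivalently, since $H_\pi$ acts freely on the torus $\R^n/L_\pi$, the Lefschetz number $\det(\id-\rho(g))$ of each induced map $\overline g$ must vanish, forcing every $g\neq\id$ to have $1$ as an eigenvalue. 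I may therefore assume that every non-identity element of $H_\pi$ fixes a nonzero vector.

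This coarse condition is not sufficient by itself: it holds for genuinely irreducible lattice-preserving representations, such as the action of the rotation group of the cube on $\Z^3\subset\R^3$, where each element is a rotation and hence fixes its axis, yet the representation on $\R^3$ is irreducible. The heart of the matter must therefore be a finer, prime-local computation. For a prime-order $C=\langle g\rangle$ the obstruction is explicit: by Tate periodicity for cyclic groups $H^2(C;L_\pi)\cong L_\pi^{\,C}/N_gL_\pi$, and $\operatorname{res}_C\alpha$ is the class of $N_gv=\lvert C\rvert\,P_gv$, so non-splitting over $C$ means precisely $P_gv\notin P_g(L_\pi)$ inside $\operatorname{Fix}(\rho(g))$. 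The goal is to show that irreducibility of the rational module $V:=L_\pi\otimes\mathds Q$ — which follows from $\R$-irreducibility, since any proper $\mathds Q[H_\pi]$-submodule would span a proper $\R[H_\pi]$-submodule — prevents these conditions from holding simultaneously for all prime-order $C$. I would proceed one prime $p$ at a time, restricting $\alpha$ to a Sylow $p$-subgroup $P$ (restriction being injective on the $p$-primary part of $H^2$), and study the $p$-group $P$ acting through the irreducible module $V$, whose real commutant $\operatorname{End}_{\R[H_\pi]}(\R^n)$ is one of $\R$, $\C$, $\H$ by Schur's lemma, matching the type $\mathds K$ of Theorem~\ref{mainthm:teichdescription}; the arithmetic of the fixed sublattices $L_\pi^{\,C}$ and the norms $N_gL_\pi$, rigidified by this irreducibility, should force $\operatorname{res}_C\alpha=0$ for some prime-order $C$.

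The main obstacle is exactly this last step: converting global irreducibility of the holonomy module into the vanishing of the extension class on some prime-order cyclic subgroup. The difficulty is real, since the examples above show that neither the presence of an invariant lattice nor the existence of fixed vectors for each individual element is enough; the argument must instead exploit how irreducibility constrains the entire compatible system of projected lattices $P_g(L_\pi)$ and norm images $N_gL_\pi$ across $H_\pi$ at once, rather than any single element in isolation.
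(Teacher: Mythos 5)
Your preparatory reductions are all correct, and worth crediting: the identification of the extension \eqref{eq:exsecBieber} with a class $\alpha\in H^2(H_\pi;L_\pi)$; the criterion that $\pi$ is torsion-free if and only if $\operatorname{res}_C\alpha\neq0$ for every prime-order subgroup $C\leq H_\pi$; the computation $(A,v)^m=(\id,N_Av)$ with $N_A=m\,P_A$, which shows every nontrivial element of $H_\pi$ must have $1$ as an eigenvalue; and the Tate-periodicity translation of non-splitting over $C=\langle g\rangle$ into the condition $P_gv\notin P_g(L_\pi)$. Your fixed-vector step is precisely the elementary observation the paper records in the remark following Theorem~\ref{thm:redholonomy}, where it yields reducibility only when $H_\pi$ has nontrivial center (centrality of $A$ is what makes $\ker(A-\id)$ invariant under $H_\pi$); your cube example correctly explains why this observation alone cannot suffice. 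Note also that the paper itself contains no proof of this theorem: it cites Hiss and Szczepa\'nski~\cite{hiss-szczepa} and explicitly warns that their proof is ``rather involved,'' so the only in-paper argument is the special case your first step parallels.

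The genuine gap is the one you name yourself, and it is not a technicality but the entire theorem: deducing from $\mathds Q$-irreducibility of $L_\pi\otimes\mathds Q$ that $\operatorname{res}_C\alpha=0$ for some prime-order $C$ is exactly the content of the Hiss--Szczepa\'nski result, and your text offers for it only a strategy (``should force''), not an argument. Everything you establish before that point merely restates, in cohomological language, that $\alpha$ is a \emph{special} class on a rationally irreducible module; no constraint is actually extracted from irreducibility on the compatible system of conditions $P_gv\notin P_g(L_\pi)$. Moreover, the Sylow reduction you sketch is looser than it appears: restriction is indeed injective on the $p$-primary part of $H^2$, but the restriction of the irreducible $H_\pi$-module to a Sylow $p$-subgroup $P$ need not remain irreducible, so ``the $p$-group $P$ acting through the irreducible module $V$'' is not available as stated --- handling the interplay between specialness of the restricted class and the decomposition of the module over $P$ is precisely where the involved finite-group-theoretic analysis of \cite{hiss-szczepa} begins. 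As it stands, your proposal is a correct reformulation plus the easy elimination of fixed-point-free elements; the theorem itself remains unproven.
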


\begin{remark}
The proof of Theorem~\ref{thm:redholonomy} is rather involved, however it is worth observing that it becomes elementary if $H_\pi$ has nontrivial center. Namely, since the action of $\pi$ on $\R^n$ is free, given $(A,v)\in\pi$, $(A,v)\neq(\mathrm{Id},0)$, there does not exist $x\in\R^n$ such that $Ax+v=x$, i.e., $(A-\mathrm{Id})x=-v$. Thus, $(A-\mathrm{Id})$ is not invertible, hence $1$ is an eigenvalue of $A$. This gives a nontrivial orthogonal decomposition $\R^n=\ker(A-\mathrm{Id})\oplus\operatorname{im}(A-\mathrm{Id})$, and $\ker(A-\mathrm{Id})$ is clearly $H_\pi$-invariant if $A\in\centr(H_\pi)$.
\end{remark}

The hypothesis that $\pi$ is torsion-free is essential in Theorem~\ref{thm:redholonomy}. In fact, it is easy to find crystallographic groups $\pi\subset\Iso(\R^n)$ whose holonomy $H_\pi$ acts irreducibly on $\R^n$, see Subsection~\ref{subsec:flat2orbifolds} for examples with $n=2$.

\section{Sequences of flat manifolds}\label{sec:sequences}

In this section, we analyze sequences of closed flat manifolds, proving Theorem~\ref{mainthm:limits}.

\subsection{Gromov-Hausdorff distance}
A map $f\colon X\to Y$ between metric spaces, not necessarily continuous, is called an $\varepsilon$-approximation if an $\varepsilon$-neighborhood of its image covers all of $Y$ and $|d_X(p,q)-d_Y(f(p),f(q))|\leq\varepsilon$ for all $p,q\in X$.
The \emph{Gromov-Hausdorff distance} 
between two compact metric spaces $X$ and $Y$ is the infimum of $\varepsilon>0$ such that there exist $\varepsilon$-approximations $X\to Y$ and $Y\to X$. 
This distance function between (isometry classes of) compact metric spaces and the corresponding notion of convergence were pioneered by Gromov~\cite{Gromov81}.

Gromov-Hausdorff convergence can be easily extended to \emph{pointed} complete metric spaces, by declaring that $(X_i,p_i)$ converges to $(X,p)$ if, for all $r>0$, the ball of radius $r$ in $X_i$ centered at $p_i$ Gromov-Hausdorff converges to the ball of radius $r$ in $X$ centered at $p$. Furthermore, an \emph{equivariant} extension of this notion was introduced by Fukaya~\cite{Fuka85} and achieved its final form with Fukaya and Yamaguchi~\cite{FukaYama92}.

\subsection{Converging sequences of flat manifolds}
We begin by analyzing the case of flat tori:

\begin{proposition}\label{prop:toritotori}
The Gromov-Hausdorff limit of a sequence $\{(T^n,\g_i)\}_{i\in\N}$ of flat tori with bounded diameter is a flat torus $(T^m,\h)$ of dimension $0\leq m\leq n$.
\end{proposition}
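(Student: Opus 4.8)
The plan is to deduce this from the equivariant Gromov--Hausdorff precompactness theorem of Fukaya and Yamaguchi~\cite{FukaYama92}, applied to the universal Riemannian coverings together with their deck-transformation groups. By the Killing--Hopf Theorem, each $(T^n,\g_i)$ is isometric to $\R^n/\Gamma_i$, where $\Gamma_i\subset\Iso(\R^n)$ is a full-rank lattice of translations (so $\Gamma_i\cong\Z^n$ and $\R^n/\Gamma_i=(T^n,\g_i)$); I would view this as the pointed metric space with isometric group action $(\R^n,0,\Gamma_i)$.

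First, since all the universal covers are literally the fixed Euclidean space $(\R^n,\g_\text{flat})$, the underlying pointed spaces converge trivially; hence, after passing to a subsequence, $(\R^n,0,\Gamma_i)$ converges in the equivariant pointed Gromov--Hausdorff sense to some $(\R^n,0,G)$, where $G\subset\Iso(\R^n)$ is a closed subgroup, and correspondingly $\R^n/\Gamma_i\to\R^n/G$. Since the whole sequence $(T^n,\g_i)$ is assumed to converge, its limit must be this $\R^n/G$.

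Next, I would identify $G$. Every element of $G$ arises as a limit of elements $\gamma_i\in\Gamma_i$; each $\gamma_i$ is a translation, and on the fixed model $\R^n$ a convergent sequence of translations (whose translation vectors $\gamma_i(0)$ necessarily stay bounded) converges to a translation. Thus $G$ is a closed subgroup of $(\R^n,+)$, so $G=V\oplus\Lambda$ for a linear subspace $V\subseteq\R^n$ and a lattice $\Lambda$ in a complementary subspace; consequently $\R^n/G$ is isometric to a Riemannian product $(T^m,\h)\times\R^a$, where $\h$ is a flat metric, $m=\operatorname{rank}\Lambda$ and $a=n-\dim V-m$. Finally, the diameter is continuous under Gromov--Hausdorff convergence, so $\diam(\R^n/G)\le\sup_i\diam(T^n,\g_i)<\infty$, which forces $a=0$. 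Hence the limit is a flat torus $(T^m,\h)$ with $0\le m\le n$, as claimed (the collapsed case $m<n$ occurring exactly when the systoles of $(T^n,\g_i)$ tend to $0$).

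The step I expect to be the main obstacle is the claim that the limiting group $G$ consists only of translations --- i.e.\ that the equivariant limit does not acquire a rotational part. This is clean here precisely because every universal cover is the same Euclidean space, so convergence of the $\gamma_i$ can be tested on a single fixed model and the trivial rotational parts stay trivial in the limit; for a genuinely varying sequence of covers this point would require real work. The remaining ingredients --- the classification of closed subgroups of $(\R^n,+)$ and the continuity of the diameter --- are routine. (An alternative route, bypassing equivariant Gromov--Hausdorff theory, is the hands-on argument carried out in Appendix~\ref{sec:appendixB}: pass to a subsequence along which the successive minima of $\Gamma_i$ converge, let $S_i\subseteq\Gamma_i$ be the sublattice generated by the vectors shorter than a threshold separating the minima that tend to $0$ from the rest, note that $\operatorname{span}(S_i)/S_i$ then has diameter tending to $0$, and analyze the closure of the image of $\Gamma_i$ in the orthogonal complement of $\operatorname{span}(S_i)$; this also recovers Mahler's compactness theorem in the non-collapsed case.)
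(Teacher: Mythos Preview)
Your proof is correct and follows essentially the same route as the paper: apply Fukaya--Yamaguchi equivariant Gromov--Hausdorff precompactness to the constant sequence $(\R^n,0)$ with the lattices $\Gamma_i$, argue that the limit group $G\subset\Iso(\R^n)$ consists only of translations, classify closed subgroups of $(\R^n,+)$, and use the diameter bound to rule out a Euclidean factor. The only cosmetic difference is in the step you flagged: the paper justifies that $G$ contains no rotations via the metric characterization of translations as isometries of constant displacement (a property preserved under equivariant limits), whereas you argue directly that limits of translations on the fixed model $\R^n$ are translations---both are valid here.
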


\begin{proof}
The above result follows from a general construction of limits of isometric group actions due to Fukaya and Yamaguchi~\cite[Prop.~3.6]{FukaYama92}. Namely, let $\{(X_i,p_i)\}_{i\in\N}$ be a sequence of pointed complete Riemannian manifolds (more generally, pointed locally compact length spaces) that Gromov-Hausdorff converges to a limit space $(X,p)$ and $G_i\subset\Iso(X_i)$ be closed subgroups of isometries. Then there is a closed subgroup $G\subset\Iso(X)$ such that $\{(X_i,G_i,p_i)\}_{i\in\N}$ converges in equivariant Gromov-Hausdorff sense to $(X,G,p)$. In particular, the orbit spaces $\{(X_i/G_i,[p_i])\}_{i\in\N}$ Gromov-Hausdorff converge to the orbit space $(X/G,[p])$.

Each flat torus $(T^n,\g_i)$ is isometric to $\R^n/G_i$, where $G_i\subset\Iso(\R^n)$ is a lattice, that is, a discrete subgroup consisting only of translations. Applying the aforementioned result to the constant sequence $(X_i,p_i)=(\R^n,0)$ and the lattices $G_i$, it follows that there exists a closed subgroup $G\subset\Iso(\R^n)$ such that $(T^n,\g_i)$ converge in Gromov-Hausdorff sense to $\R^n/G$. 
We claim that this limit group $G\subset\Iso(\R^n)$ is a degenerate lattice, that is, $G\cong L\times\R^{n-m}$ for some $0\leq m\leq n$, where $L\subset\Iso(\R^m)$ is a lattice, hence $\R^n/G$ is isometric to the flat torus $T^m=\R^m/L$.

To prove this claim, which concludes the proof of the Proposition, we use that \emph{metric} properties of the isometries $G_i$ are preserved in the limit $G$ by its inductive-projective construction. Translations are metrically characterized as isometries that have constant displacement, i.e., isometries that move all points in $\R^n$ by the same distance. Since all elements of $G_i$ satisfy this property, for all $i\in\N$, also all elements of $G$ satisfy it and are hence translations.
Moreover, the only \emph{closed} subgroups $G$ of $\Iso(\R^n)$ that consist of translations are degenerate lattices in subspaces of $\R^n$. Indeed, the identity connected component $G_0\cong\R^{n-m}$ is a subspace of $\R^n$ and the quotient $L=G/\R^{n-m}$ is discrete and abelian, hence a lattice in a subspace $V\subset\R^n$ with $V\cap \R^{n-m}=\{0\}$. As the limit space $\R^n/G$ is compact, since $\R^n/G_i$ have bounded diameter, we have that $V\cong\R^{m}$ is a complement of $G_0\cong\R^{n-m}$. This proves that $G\cong L\times\R^{n-m}$ is a degenerate lattice in $\R^n$.
\end{proof}

By the Bieberbach Theorems, there are only finitely many diffeomorphism types of closed flat $n$-manifolds for any given $n\in\N$. Thus, up to subsequences, we may assume that any Gromov-Hausdorff converging sequence of closed flat $n$-manifolds is of the form $\{(M,\g_i)\}_{i\in\N}$, where $\g_i$ are flat metrics on a fixed manifold~$M$.

\begin{proposition}\label{prop:limitaction}
Let $\{(M,\g_i)\}_{i\in\N}$ be a Gromov-Hausdorff sequence of closed flat $n$-manifolds that converges to a limit metric space $(X,d_X)$. Then $(X,d_X)$ is isometric to a flat orbifold $T^m/H$, where $0\leq m\leq n$ and $H\subset\O(n)$ is a finite subgroup conjugate to the holonomy group of $(M,\g_i)$ that acts isometrically on $T^m$.
\end{proposition}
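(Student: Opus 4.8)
The plan is to realize the sequence as quotients of $\R^n$ by a convergent sequence of Bieberbach groups, form an equivariant Gromov--Hausdorff limit as in Proposition~\ref{prop:toritotori}, and then read off the structure of the limit group. Write $H$ for the holonomy group common to all the metrics $\g_i$. Since $H$ is finite, after one fixed linear change of coordinates on the universal cover I may assume $H\subset\O(n)$; each flat metric $\g_i$ then corresponds to an $H$-invariant inner product $P_i$ on $\R^n$, and for $h\in H\subset\O(n)$ the relations $h^{\top}P_ih=P_i$ and $h^{\top}=h^{-1}$ give $[h,P_i]=0$, so $P_i^{1/2}$ also commutes with $H$. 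Conjugating by $Q_i:=P_i^{1/2}$ therefore realizes $(M,\g_i)\cong\R^n/\pi_i$ with $\pi_i\subset\Iso(\R^n)$ a Bieberbach group whose holonomy $\rot(\pi_i)$ is \emph{still} $H$ for every $i$, and whose translation lattice $L_i:=\ker(\rot|_{\pi_i})$ fits in the exact sequence $1\to L_i\to\pi_i\to H\to1$.

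Next, applying the Fukaya--Yamaguchi construction \cite[Prop.~3.6]{FukaYama92} to the constant sequence $(\R^n,0)$ together with the closed subgroups $\pi_i$ and $L_i$ simultaneously, I obtain (after passing to a subsequence) closed subgroups $L\subseteq G\subseteq\Iso(\R^n)$ with $(\R^n,\pi_i,0)\to(\R^n,G,0)$ and $(\R^n,L_i,0)\to(\R^n,L,0)$ in the equivariant Gromov--Hausdorff sense, so that $X=\R^n/G$ and $\R^n/L=\lim_i\R^n/L_i$. Here $X$ is compact (a Gromov--Hausdorff limit of compact spaces), hence so are $\R^n/G$ and the quotient $\R^n/L$ lying finitely above it. Because the underlying space $\R^n$ does not vary, $\rot\colon\Iso(\R^n)\to\O(n)$ is continuous along the convergent sequences of isometries produced by the construction; since $\rot(\pi_i)=H$ for all $i$, this forces $\rot(G)\subseteq H$, and $\rot(L_i)=\{\id\}$ likewise gives $\rot(L)=\{\id\}$.

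Now I identify $L$ and $G$. As $L$ consists of translations and $\R^n/L$ is compact, the reasoning in the proof of Proposition~\ref{prop:toritotori} shows that $L\cong\Lambda\times\R^{n-m}$ is a degenerate lattice, so $T^m:=\R^n/L$ is a flat torus with $0\le m\le n$. Moreover $L=\ker(\rot|_G)$: if $(\id,v)=\lim_i(A_i,v_i)$ with $(A_i,v_i)\in\pi_i$, then $A_i\to\id$ while $A_i\in H$ finite, so $A_i=\id$ for large $i$, whence $(\id,v_i)\in L_i$ and $(\id,v)\in L$. Consequently $L\trianglelefteq G$ and $\rot$ embeds $G/L$ into $H$; it remains to show this embedding is onto, i.e.\ $\rot(G)=H$. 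For each $A\in H$, choose lifts $(A,v_i)\in\pi_i$; I would first bound $\diam(\R^n/L_i)$ uniformly --- a routine consequence of $\diam(M,\g_i)$ being bounded, of $[\pi_i:L_i]=|H|$, and of $L_i$ being generated by Schreier generators realized by loops of $\g_i$-length $\lesssim\diam(M,\g_i)$ --- so that the $v_i$ may be taken inside a fundamental domain for $L_i$, hence bounded; a subsequential limit $(A,v)\in G$ (with $\rot(A,v)=A$) then shows $A\in\rot(G)$. Thus $G/L\cong H$.

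Finally, $\rot(G)=H$ preserves $L$, hence its identity component $\R^{n-m}$ and the complementary subspace $V\cong\R^m$ (which I may arrange to contain $\Lambda$); so $G$ acts on $T^m=V/\Lambda$ through the finite group $G/L\cong H$ by isometries, and $X=\R^n/G=T^m/H$ is the quotient of a flat torus by a finite group of isometries, i.e.\ a closed flat orbifold, with $H$ conjugate to the holonomy group of $(M,\g_i)$. I expect the main obstacle to be the second half of the third step --- ruling out ``extra collapsing'' that would make $\rot(G)$ a proper subgroup of $H$, equivalently showing that every holonomy element persists in the limit action. The constant-holonomy normalization of the first step and the uniform diameter bound on $\R^n/L_i$ are precisely the tools meant to make this, and the reduction to Bieberbach structure theory, go through.
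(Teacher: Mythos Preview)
Your argument is correct, but you take a longer path than the paper. The paper's proof passes immediately to the torus covers $(T^n,\widetilde\g_i)\to(M,\g_i)$ with deck group $H$, bounds $\diam(T^n,\widetilde\g_i)\le|H|\,\diam(M,\g_i)$, applies Proposition~\ref{prop:toritotori} to get a limit torus $(T^m,\h)$, and then invokes Fukaya--Yamaguchi for the \emph{fixed} finite group $H$ acting on the varying tori; the quotients $(T^n,\widetilde\g_i)/H\cong(M,\g_i)$ then converge to $T^m/H$. In particular, the paper never needs to analyze the structure of a limit of the Bieberbach groups themselves, nor to argue that every holonomy element survives in the limit. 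Your approach instead stays on the constant space $\R^n$, takes Chabauty-type limits of both $\pi_i$ and $L_i$, and then must identify the limit group $G$ as an extension $1\to L\to G\to H\to1$ with $L$ a degenerate lattice; the surjectivity step $\rot(G)=H$ and the equality $L=\ker(\rot|_G)$ are exactly the extra work this entails. What you gain is that all convergence happens inside $\Iso(\R^n)$ on a fixed model space (so the Fukaya--Yamaguchi input reduces to ordinary convergence of closed subgroups), and you obtain the finer statement that the \emph{full} holonomy $H$ acts on the limit torus rather than merely a quotient. Two minor presentational points: the compactness of $\R^n/L$ should be established before invoking the degenerate-lattice argument (either via the diameter bound you defer, or via $[G:L]\le|H|$ once $\ker(\rot|_G)\subseteq L$ is shown), and the diameter bound $\diam(\R^n/L_i)\le|H|\,\diam(M,\g_i)$ follows directly from the covering degree without the Schreier-generator detour.
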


\begin{proof}
Without loss of generality, we may assume that the holonomy groups of $(M,\g_i)$ are all equal to $H\subset\O(n)$, see Corollary~\ref{thm:sameholonomy}.
Let $(T^n,\widetilde \g_i)\to (M,\g_i)$ be the Riemannian coverings by flat tori whose group of deck transformations is $H$. Then,
\[\frac1{\vert H\vert}\,\operatorname{diam}(T^n,\widetilde \g_i)\le\operatorname{diam}(M,\g_i)\le\operatorname{diam}(T^n,\widetilde \g_i).\]
Therefore, $\operatorname{diam}(T^n,\widetilde\g_i)\leq 2|H|\operatorname{diam}(X,d_X)$ and, by Gromov's Compactness Theorem and Proposition~\ref{prop:toritotori}, there is a subsequence of $\{(T^n,\widetilde \g_i)\}_{i\in\N}$ that Gromov-Hausdorff converges to a flat torus $(T^m,\h)$, $0\leq m\leq n$.
Up to passing to a new subsequence, this convergence is in equivariant Gromov-Hausdorff sense with respect to the isometric $H$-actions, by Fukaya and Yamaguchi~\cite[Prop.~3.6]{FukaYama92}.
Thus, the orbit spaces $\{(T^n,\widetilde\g_i)/H\}_{i\in\N}$, which are isometric to $\{(M,\g_i)\}_{i\in\N}$, Gromov-Hausdorff converge to the orbit space $(T^m,\h)/H$.
\end{proof}

To finish the proof of Theorem~\ref{mainthm:limits}, it  only remains to prove the following:

\begin{proposition}
Any flat orbifold is the Gromov-Hausdorff limit of a sequence of closed flat manifolds.
\end{proposition}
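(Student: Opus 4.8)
The plan is to realize the given closed flat orbifold as the collapsed limit of a single closed flat manifold, by pairing it with an auxiliary flat manifold having the same holonomy and then shrinking the auxiliary directions. Write $\mathcal O=\R^n/\pi$ for a crystallographic group $\pi\subset\Iso(\R^n)$ (Section~\ref{sec:prelims}), with holonomy $H:=H_\pi=\rot(\pi)$. First I would invoke the Auslander--Kuranishi theorem (Theorem~\ref{thm:AKthm}) to obtain a Bieberbach group $\pi_0\subset\Iso(\R^k)$ whose holonomy $\rot(\pi_0)$ is isomorphic to $H$; fixing such an isomorphism and using it to identify $\rot(\pi_0)$ with $H$, I would form the fibered product
\[
\Gamma\ :=\ \pi\times_H\pi_0\ =\ \bigl\{(\gamma,\gamma_0)\in\pi\times\pi_0:\rot(\gamma)=\rot(\gamma_0)\ \text{in}\ H\bigr\},
\]
regarded as a subgroup of $\Iso(\R^n)\times\Iso(\R^k)\subset\Iso(\R^{n+k})$, acting diagonally on $\R^{n+k}=\R^n\times\R^k$.

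Next I would check that $\Gamma$ is again a Bieberbach group. Its subgroup of pure translations equals the full-rank lattice $L_\pi\times L_{\pi_0}\subset\R^{n+k}$, which is normal in $\Gamma$ with quotient $H$, since $(\gamma,\gamma_0)\mapsto\rot(\gamma)$ maps $\Gamma$ onto $H$ with exactly that kernel; hence $\Gamma$ is crystallographic. It is torsion-free: a finite-order element $(\gamma,\gamma_0)$ has $\gamma_0$ of finite order in the torsion-free group $\pi_0$, so $\gamma_0=\id$, whence $\rot(\gamma)=\id$, so $\gamma\in L_\pi$, and $L_\pi$ is torsion-free, so $(\gamma,\gamma_0)=\id$. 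Thus $M:=\R^{n+k}/\Gamma$ is a closed flat manifold. Moreover the first projection $\Gamma\to\pi$ is onto (a given $\gamma\in\pi$ is lifted by any $\gamma_0\in\pi_0$ with $\rot(\gamma_0)=\rot(\gamma)$), so the linear projection $\R^{n+k}\to\R^n$, being $\Gamma$-equivariant, descends to a surjection $\bar p\colon M\to\mathcal O$; here the kernel of $\Gamma\to\pi$ is $\{\id\}\times L_{\pi_0}$, which acts on $\R^{n+k}$ by translations in the $\R^k$-directions only.

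To produce the collapse, for $\varepsilon>0$ I would put on $\R^{n+k}$ the flat metric $\g_\varepsilon$ equal to the orthogonal sum of the standard metric on the $\R^n$-factor and $\varepsilon^2$ times the standard metric on the $\R^k$-factor. Since $\rot(\Gamma)$ acts block-diagonally by orthogonal maps on the two factors, it preserves $\g_\varepsilon$, which therefore descends to a flat metric $\bar\g_\varepsilon$ on $M$. Setting $\mu:=\diam\bigl(\R^k/L_{\pi_0}\bigr)$, I would show that $\bar p\colon(M,\bar\g_\varepsilon)\to\mathcal O$ distorts distances by at most $\varepsilon\mu$. The inequality $d_\mathcal O(\bar p[X],\bar p[Y])\le d_M([X],[Y])$ holds because the projection is $1$-Lipschitz for $\g_\varepsilon$. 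For the reverse estimate, given lifts $X,Y\in\R^{n+k}$ with first coordinates $x,y\in\R^n$, I would pick $g\in\pi$ realizing $|x-gy|=d_\mathcal O(\bar p[X],\bar p[Y])=:a$, lift $g$ to some $\gamma\in\Gamma$, and adjust $\gamma$ by an element of $\{\id\}\times L_{\pi_0}$ so that the $\R^k$-displacement $b$ between $X$ and $\gamma Y$ satisfies $b\le\mu$; since the $\R^n$-displacement is exactly $a$, this yields
\[
d_M([X],[Y])\ \le\ d_{\g_\varepsilon}(X,\gamma Y)\ =\ \sqrt{a^2+\varepsilon^2b^2}\ \le\ a+\varepsilon\mu .
\]
As $\bar p$ is onto, it is then an $\varepsilon\mu$-approximation, so $d_{\mathrm{GH}}\bigl((M,\bar\g_\varepsilon),\mathcal O\bigr)\le\varepsilon\mu$; taking $\varepsilon=1/i$ exhibits $\mathcal O$ as the Gromov--Hausdorff limit of the closed flat manifolds $\{(M,\bar\g_{1/i})\}_{i\in\N}$.

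The only genuinely essential input I expect is the existence of a torsion-free crystallographic group with prescribed holonomy, i.e.\ Auslander--Kuranishi; granting this, the fibered product is just a uniform recipe for ``resolving'' the singularities of $\mathcal O$ while keeping $\mathcal O$ itself, with its given flat orbifold metric, as the collapsed limit. The mildly delicate points are the torsion-freeness of $\Gamma$ and arranging the adjustment by $\{\id\}\times L_{\pi_0}$ so that the transverse displacement stays bounded uniformly in $\varepsilon$; neither is serious, and because every distance estimate is carried out $\pi$- and $\Gamma$-equivariantly upstairs on Euclidean space, the orbifold singular set of $\mathcal O$ never causes trouble. As a by-product, this argument bounds $N(\mathcal O)\le n+k$ in the notation of the Introduction.
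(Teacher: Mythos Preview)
Your proof is correct and is essentially the same as the paper's: your fibered product $\Gamma=\pi\times_H\pi_0$ is precisely the Bieberbach group of the paper's manifold $N=(T^n\times T^m)/H$ under the diagonal $H$-action (with $m=k$), and your torsion-freeness check for $\Gamma$ is exactly the observation that this diagonal action is free because $H$ already acts freely on $T^m$. The only difference is cosmetic---the paper phrases the construction at the torus level and simply declares the collapse ``clear,'' whereas you work at the level of crystallographic groups and supply the explicit Gromov--Hausdorff distance estimate.
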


\begin{proof}
Let $\mathcal O=\R^n/\pi_{\mathcal O}$ be a flat orbifold, with orbifold fundamental group given by the crystallographic group $\pi_{\mathcal O}\subset\Iso(\R^n)$. By Theorem~\ref{thm:AKthm} (of Auslander and Kuranishi~\cite{auslanderkuranishi57}), there exists a closed flat manifold $M=\R^m/\pi_M$ whose fundamental group $\pi_M\subset\Iso(\R^m)$ is a Bieberbach group with the same holonomy, that is, $H_{\pi_M}\cong H_{\pi_{\mathcal O}}$. Denote this finite group by $H$, and consider its (isometric) actions on the flat tori $T^n=\R^n/L_{\pi_{\mathcal O}}$ and $T^m=\R^m/L_{\pi_M}$. Clearly, the orbit spaces of these actions are $T^n/H=\mathcal O$ and $T^m/H=M$. Since the $H$-action on $T^m$ is free, so is the diagonal $H$-action on the product $T^n\times T^m$. Thus, $N=(T^n\times T^m)/H$ is a closed flat manifold. The product metrics $\g_\lambda=\g_{T^n}\oplus\lambda\,\g_{T^m}$, $\lambda>0$, are invariant under the $H$-action and hence descend to flat metrics on $N$. The closed flat manifolds $\{(N,\g_\lambda)\}_{\lambda>0}$ clearly Gromov-Hausdorff converge to $\mathcal O$ as $\lambda\searrow0$.
\end{proof}

\section{Teichm\"uller space and moduli space of flat metrics}\label{sec:algdescr}

In this section, we study the Teichm\"uller space and moduli space of flat metrics on a closed flat orbifold (or manifold), and prove Theorem~\ref{mainthm:teichdescription} in the Introduction.

\subsection{Teichm\"uller space}
It follows from the Bieberbach Theorems that any flat metrics on a given closed flat orbifold (or manifold) $\mathcal O=\R^n/\pi$ are of the form $\g_{\pi'}$ where $\pi'=(A,v)\,\pi\,(A,v)^{-1}$ for some $(A,v)\in\aff(\R^n)$. In this situation,
\begin{equation}\label{eq:conjholonomy}
H_{\pi'}=A\,H_\pi\, A^{-1}\quad\text{and}\quad L_{\pi'}=A(L_\pi).
\end{equation}

Distinguishing isometry classes of such metrics is straightforward (see also \cite{BetPic2016}):

\begin{lemma}\label{lemma:isomiff}
The metrics $\g_\pi$ and $\g_{\pi'}$ are isometric, where  $\pi'=(A,v)\,\pi\,(A,v)^{-1}$, if and only if $A=B\,C$, where $B\in\O(n)$ and $C\in\mathcal N_\pi:=\rot\big(\!\norm_{\aff(\R^n)}(\pi)\big)$.
\end{lemma}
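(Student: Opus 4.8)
The plan is to translate the condition "there is an isometry between $(\mathcal O,\g_\pi)$ and $(\mathcal O,\g_{\pi'})$" into a conjugacy statement at the level of crystallographic groups, and then read off exactly which affine maps realize it. First I would recall that an isometry $f\colon(\mathcal O,\g_\pi)\to(\mathcal O,\g_{\pi'})$ lifts to an isometry $\widetilde f$ of the common universal cover $(\R^n,\g_{\mathrm{flat}})$, hence $\widetilde f=(B,w)\in\Iso(\R^n)$ for some $B\in\O(n)$, $w\in\R^n$; and the lift being equivariant means precisely that $(B,w)\,\pi\,(B,w)^{-1}=\pi'$. So the metrics are isometric if and only if $\pi'=(B,w)\,\pi\,(B,w)^{-1}$ for some rigid motion $(B,w)$.

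Next I would compare this with the given presentation $\pi'=(A,v)\,\pi\,(A,v)^{-1}$ with $(A,v)\in\aff(\R^n)$. Both conjugations produce the same subgroup $\pi'$ exactly when $(A,v)^{-1}(B,w)$ normalizes $\pi$, i.e.\ $(B,w)=(A,v)\cdot(C',u)$ for some $(C',u)\in\norm_{\aff(\R^n)}(\pi)$. Applying the rotation homomorphism $\rot$ and using $\rot(B,w)=B\in\O(n)$, $\rot(A,v)=A$, $\rot(C',u)=C\in\mathcal N_\pi$, this gives $B=A\,C$, hence $A=B\,C^{-1}$. Since $\mathcal N_\pi=\rot(\norm_{\aff(\R^n)}(\pi))$ is a group (being the image of a group under a homomorphism), $C^{-1}\in\mathcal N_\pi$, and I may rename $C^{-1}$ as $C$ to get $A=B\,C$ with $B\in\O(n)$, $C\in\mathcal N_\pi$, which is the stated condition. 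Conversely, if $A=BC$ with $B\in\O(n)$ and $C=\rot(C',u)$ for some $(C',u)\in\norm_{\aff(\R^n)}(\pi)$, then $(A,v)\cdot(C',u)^{-1}$ has rotational part $AC^{-1}=B\in\O(n)$, so it is a rigid motion, and it conjugates $\pi$ to the same group $\pi'$ (since $(C',u)^{-1}\in\norm_{\aff(\R^n)}(\pi)$); hence $\g_\pi$ and $\g_{\pi'}$ are isometric.

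The main point requiring care — and the step I expect to be the real content rather than formal bookkeeping — is the very first one: that \emph{every} isometry between the two flat orbifolds lifts to a \emph{global} rigid motion of $\R^n$, not merely to a local isometry. For good flat orbifolds this follows from the fact (established via the Bieberbach Theorems as recalled in Subsection~\ref{sub:BiebGroups}) that the universal orbifold cover is $\R^n$ with deck group the crystallographic group, together with the elementary fact that a metric isometry of $\R^n$ is affine; the equivariance of the lift with respect to the deck actions $\pi$ and $\pi'$ is what forces $(B,w)\,\pi\,(B,w)^{-1}=\pi'$. I would also note the matching-of-metrics subtlety: the lift must be an isometry of the \emph{fixed} flat metric $\g_{\mathrm{flat}}$ on $\R^n$ (this is why $B$ lands in $\O(n)$ and not just $\GL(n)$), which is exactly the point at which "isometric" rather than merely "affinely equivalent" enters. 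Everything after that is the homomorphism chase above, and the fact that $\norm_{\aff(\R^n)}(\pi)\supset\pi$ together with Bieberbach II guarantees $\mathcal N_\pi$ is a genuine (possibly infinite) subgroup of $\GL(n)$ containing $H_\pi$, so the decomposition $A=BC$ is meaningful.
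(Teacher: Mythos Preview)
Your argument is correct and is essentially the paper's own proof: both reduce the isometry question to the existence of a rigid motion $(B,w)$ with $(B,w)\,\pi\,(B,w)^{-1}=\pi'$, and then compare with the given conjugation by $(A,v)$ via an element of $\norm_{\aff(\R^n)}(\pi)$. The only cosmetic difference is that the paper sets $(C,z):=(B,w)^{-1}(A,v)$ directly (so $A=BC$ without a renaming), whereas you take $(A,v)^{-1}(B,w)$ and then invert; and your discussion of the lifting step is more explicit than the paper's one-line ``by lifting isometries to $\R^n$.''
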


\begin{proof}
By lifting isometries to $\R^n$, it is clear that $\g_\pi$ and $\g_{\pi'}$ are isometric if and only if there is $(B,w)\in\Iso(\R^n)$ such that $(B,w)\,\pi\,(B,w)^{-1}=(A,v)\,\pi\,(A,v)^{-1}$, that is, $(C,z):=(B,w)^{-1}(A,v)\in\norm_{\aff(\R^n)}(\pi)$. If $\g_\pi$ and $\g_{\pi'}$ are isometric, then clearly $A=B \, C$, with $B\in\O(n)$  and $C\in\mathcal N_\pi$. Conversely, if $A=B \, C$, with $B\in\O(n)$ and $C\in\mathcal N_\pi$, there is $z\in\R^n$ such that $(C,z)\in\norm_{\aff(\R^n)}(\pi)$. Set $w=v-Bz$, so that  $(A,v)=(B,w)\, (C,z)$. Clearly, $(B,w)^{-1}(A,v)\in\norm_{\aff(\R^n)}(\pi)$, so $\g_\pi$ and $\g_{\pi'}$ are isometric.
\end{proof}

While Lemma~\ref{lemma:isomiff} provides the appropriate equivalence relation to distinguish isometry classes, it remains to characterize the space where these relations take place. For a given crystallographic group $\pi\subset\Iso(\R^n)$, we let
\begin{equation}\label{eq:cpi}
\mathcal C_\pi:=\big\{A\in\GL(n):A\,H_\pi \, A^{-1}\subset\O(n)\big\}.
\end{equation}
It is easy to see that $A\,H_\pi \, A^{-1}\subset\O(n)$ is equivalent to $A^{\mathrm t}A\in\centr_{\GL(n)}(H_\pi)$, where $A^{\mathrm t}$ is the transpose of $A$. The set $\mathcal C_\pi$ is a closed cone in $\GL(n)$ that contains $\norm_{\GL(n)}(H_\pi)$. There are natural actions on $\mathcal C_\pi$ by matrix multiplication, on the left by $\O(n)$ and on the right by $\norm_{\GL(n)}(H_\pi)$. In this context, it is natural to introduce the following:

\begin{definition}\label{thm:classesorbits}
The \emph{Teichm\"uller space} $\teichflat(\mathcal O)$ of the flat orbifold $\mathcal O=\R^n/\pi$ is the orbit space $\O(n)\backslash\mathcal C_\pi$ of the left $\O(n)$-action on $\mathcal C_\pi$.
\end{definition}

The Teichm\"uller space $\teichflat(\mathcal O)$ is a real-analytic manifold diffeomorphic to $\R^d$, and is described further in Subsection~\ref{subsec:algebraicdescrpt}.
It can also be obtained as the deformation space of certain $(X,G)$-structures, see Section~\ref{sub:defXG}.

\subsection{Moduli space}
The \emph{moduli space} $\Flat(\mathcal O)$ of the flat orbifold $\mathcal O=\R^n/\pi$ is defined as the set of isometry classes of flat metrics on $\mathcal O$.
Considering the restriction to $\mathcal N_\pi$ of the right $\norm_{\GL(n)}(H_\pi)$-action on $\mathcal C_\pi$, the following identification follows directly from Lemma~\ref{lemma:isomiff}, cf.~Wolf~\cite[Thm.~1]{Wolf73}.

\begin{proposition}\label{thm:discrquotient}
$\Flat(\mathcal O)=\teichflat(\mathcal O)/\mathcal N_\pi$.
\end{proposition}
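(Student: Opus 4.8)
The plan is to exhibit an explicit bijection between $\Flat(\mathcal O)$ and the double coset space $\O(n)\backslash\mathcal C_\pi/\mathcal N_\pi$, which by Definition~\ref{thm:classesorbits} is precisely $\teichflat(\mathcal O)/\mathcal N_\pi$. The map I would use sends $A\in\mathcal C_\pi$ to the isometry class of the flat metric $\g_{A\pi A^{-1}}$, where $A\pi A^{-1}$ abbreviates $(A,0)\,\pi\,(A,0)^{-1}$; this group lies in $\Iso(\R^n)$ exactly because $A\in\mathcal C_\pi$ forces $A H_\pi A^{-1}\subset\O(n)$ by \eqref{eq:conjholonomy} and \eqref{eq:cpi}, and $\g_{A\pi A^{-1}}$ is read back as a flat metric on the fixed orbifold $\mathcal O$ through the affine equivalence induced by $A$.

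First I would verify surjectivity. By the Bieberbach Theorems every flat metric on $\mathcal O$ is isometric to some $\g_{\pi'}$ with $\pi'=(A,v)\,\pi\,(A,v)^{-1}\subset\Iso(\R^n)$, and the requirement $\pi'\subset\Iso(\R^n)$ is exactly $A H_\pi A^{-1}\subset\O(n)$, i.e.\ $A\in\mathcal C_\pi$. Factoring $(A,v)=(\id,v)\,(A,0)$ gives $\pi'=(\id,v)\,(A\pi A^{-1})\,(\id,v)^{-1}$, so Lemma~\ref{lemma:isomiff} (with $B=C=\id$) shows $\g_{\pi'}$ is isometric to $\g_{A\pi A^{-1}}$; thus the translation $v$ drops out at the level of isometry classes and every class in $\Flat(\mathcal O)$ is attained.

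The heart of the argument is to pin down exactly when two elements $A,A'\in\mathcal C_\pi$ produce isometric metrics. Setting $D:=A'A^{-1}$, a short computation gives $A'\pi A'^{-1}=(D,0)\,(A\pi A^{-1})\,(D,0)^{-1}$, so Lemma~\ref{lemma:isomiff}, now with base group $A\pi A^{-1}$, tells me $\g_{A\pi A^{-1}}$ and $\g_{A'\pi A'^{-1}}$ are isometric iff $D=B\,C$ with $B\in\O(n)$ and $C\in\mathcal N_{A\pi A^{-1}}$. The decisive bookkeeping step is the transformation law $\mathcal N_{A\pi A^{-1}}=A\,\mathcal N_\pi\,A^{-1}$, obtained by applying $\rot$ to $\norm_{\aff(\R^n)}(A\pi A^{-1})=(A,0)\,\norm_{\aff(\R^n)}(\pi)\,(A,0)^{-1}$. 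Writing $C=A\,C_0\,A^{-1}$ with $C_0\in\mathcal N_\pi$ converts the isometry condition into $A'=B\,A\,C_0$, i.e.\ $A$ and $A'$ represent the same class in $\O(n)\backslash\mathcal C_\pi/\mathcal N_\pi$. Together with surjectivity this yields the claimed identification.

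I expect the main obstacle to be the clean handling of the normalizer under conjugation—isolating the law $\mathcal N_{A\pi A^{-1}}=A\,\mathcal N_\pi\,A^{-1}$ and keeping the left $\O(n)$- and right $\mathcal N_\pi$-actions mutually consistent—since Lemma~\ref{lemma:isomiff} is phrased relative to a fixed base group, whereas here the base group $A\pi A^{-1}$ itself varies with the point $A$. The remaining checks, that $\mathcal C_\pi$ is stable under right multiplication by $\mathcal N_\pi\subset\norm_{\GL(n)}(H_\pi)$ and that the translation part really is immaterial, are routine.
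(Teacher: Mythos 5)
Your proof is correct and takes essentially the same route as the paper, whose entire argument is that the identification follows from Lemma~\ref{lemma:isomiff} applied to the right $\mathcal N_\pi$-action on $\mathcal C_\pi$ --- precisely your double-coset bijection $\Flat(\mathcal O)\cong\O(n)\backslash\mathcal C_\pi/\mathcal N_\pi$. You merely make explicit the bookkeeping the paper leaves implicit (surjectivity via the Bieberbach Theorems with the translation part discarded, and the conjugation law $\mathcal N_{A\pi A^{-1}}=A\,\mathcal N_\pi\,A^{-1}$ needed because Lemma~\ref{lemma:isomiff} compares metrics only against the fixed base $\g_\pi$), and these details check out.
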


Note that the right $\mathcal N_\pi$-action on $\mathcal C_\pi$ need not be free, so $\Flat(\mathcal O)$ may have (isolated) singularities; this may also be the case if $\mathcal O$ is a smooth manifold. As indicated by Proposition~\ref{thm:discrquotient}, the group $\mathcal N_\pi$ is related to the \emph{mapping class group} in this Teichm\"uller theory (see Remark~\ref{rmk:mcg}), and moreover satisfies the following:

\begin{proposition}
$\mathcal N_\pi$ is isomorphic to a discrete subgroup of the group $\aff(\mathcal O)\cong\norm_{\aff(\R^n)}(\pi)/\centr_{\aff(\R^n)}(\pi)$ of affine diffeomorphisms of $\mathcal O=\R^n/\pi$.
\end{proposition}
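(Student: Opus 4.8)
The plan is to pass to the normalizer $\norm_{\aff(\R^n)}(\pi)$ and the homomorphism $\rot$, in three stages: identify $\aff(\mathcal O)$ with $\norm_{\aff(\R^n)}(\pi)/\centr_{\aff(\R^n)}(\pi)$ and observe it is discrete; realize $\mathcal N_\pi$ as a quotient of this group; and split that quotient, which places $\mathcal N_\pi$ inside $\aff(\mathcal O)$ as a subgroup.

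For the first stage, the second Bieberbach Theorem says every automorphism of the crystallographic group $\pi$ is conjugation by an element of $\aff(\R^n)$, so $g\mapsto\bigl(\gamma\mapsto g\gamma g^{-1}\bigr)$ is a surjection of $\norm_{\aff(\R^n)}(\pi)$ onto $\Aut(\pi)$ with kernel $\centr_{\aff(\R^n)}(\pi)$; since $\mathcal O$ is good with developing space $\R^n$ and $\pi_1^{orb}(\mathcal O)\cong\pi$, the group $\Aut(\pi)$ is the affine automorphism group $\aff(\mathcal O)$, giving the stated isomorphism. Writing affine maps as pairs $(B,w)$ and imposing $(B,w)(A,v)(B,w)^{-1}=(A,v)$ for all $(A,v)\in\pi$, the instances $(\id,\ell)$ with $\ell\in L_\pi$ force $B=\id$ (here one uses that $L_\pi$ spans $\R^n$, by the first Bieberbach Theorem), and the remaining ones then force $(\id-A)w=0$ for all $A\in H_\pi$; hence $\centr_{\aff(\R^n)}(\pi)=\{(\id,w):w\in(\R^n)^{H_\pi}\}$, a connected vector subgroup. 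Since $\norm_{\aff(\R^n)}(\pi)$ acts continuously by conjugation on the discrete group $\pi$, its identity component acts trivially and thus coincides with $\centr_{\aff(\R^n)}(\pi)$; hence $\aff(\mathcal O)$ is discrete.

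For the second stage, $\centr_{\aff(\R^n)}(\pi)\subset\ker\rot$, so $\rot$ descends to a surjection $\overline\rot\colon\aff(\mathcal O)\twoheadrightarrow\mathcal N_\pi$, with kernel represented by the translations lying in $\norm_{\aff(\R^n)}(\pi)$, i.e.\ by $\Lambda:=\{w:(\id-A)w\in L_\pi\text{ for all }A\in H_\pi\}$ taken modulo $(\R^n)^{H_\pi}$. This kernel is finitely generated and torsion-free abelian: it embeds into the vector space $\R^n/(\R^n)^{H_\pi}$, and it contains the image of $L_\pi$ with finite index, because $(\R^n)^{H_\pi}$ is cut out by the equations $Aw=w$ with $A\in H_\pi$ preserving $L_\pi$ (Lemma~\ref{lem:lpiinv}), hence is $\mathds Q$-rational for $L_\pi$. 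Already this shows $\mathcal N_\pi\cong\aff(\mathcal O)/\ker\overline\rot$ is discrete.

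The third stage — splitting $\overline\rot$ — is the main obstacle. I would use the orthogonal decomposition $\R^n=(\R^n)^{H_\pi}\oplus V$ with $V=\sum_{A\in H_\pi}\operatorname{im}(A-\id)=\bigl((\R^n)^{H_\pi}\bigr)^\perp$, which is $H_\pi$-invariant (as $H_\pi\subset\O(n)$) and $\mathcal N_\pi$-invariant (as $\mathcal N_\pi$ normalizes $H_\pi$), so that the projection $q\colon\R^n\to V$ is $\mathcal N_\pi$-equivariant and $[(C,w)]\mapsto(C,q(w))$ embeds $\aff(\mathcal O)$ into the semidirect product $\mathcal N_\pi\ltimes V$, carrying $\ker\overline\rot$ isomorphically onto the full-rank lattice $\Lambda\cap V$ of $V$. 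Splitting $\overline\rot$ then amounts to choosing, for each $C\in\mathcal N_\pi$, an affine representative $(C,w_C)\in\norm_{\aff(\R^n)}(\pi)$ so that $C\mapsto q(w_C)$ is a $1$-cocycle valued in $V$; equivalently, the class in $H^2(\mathcal N_\pi;\Lambda\cap V)$ of the extension $1\to\ker\overline\rot\to\aff(\mathcal O)\to\mathcal N_\pi\to1$ must vanish. I expect this to follow by showing the associated translational cocycle $C\mapsto q(w_C)\bmod(\Lambda\cap V)\in V/(\Lambda\cap V)$ is a coboundary: after an affine change of coordinates making the translational parts of $\pi$ rational, it is valued in the torsion group $\bigl(V\cap(L_\pi\otimes\mathds Q)\bigr)/(\Lambda\cap V)$, and one should be able to trivialize it by conjugating all the chosen $(C,w_C)$ simultaneously by a single translation $(\id,t)$. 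Granting this, the resulting injection $\mathcal N_\pi\hookrightarrow\aff(\mathcal O)$ realizes $\mathcal N_\pi$ as a subgroup of the discrete group $\aff(\mathcal O)$, which is the assertion.
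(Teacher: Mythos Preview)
Your first two stages are correct. The paper argues discreteness differently---via $\mathcal N_\pi\subset\mathcal N_{L_\pi}$, a conjugate of $\GL(n,\Z)$, hence countable---but your identification of $\centr_{\aff(\R^n)}(\pi)=\{(\id,w):w\in(\R^n)^{H_\pi}\}$ as the identity component of the normalizer is a clean alternative. Writing $N=\norm_{\aff(\R^n)}(\pi)$ and $Z=\centr_{\aff(\R^n)}(\pi)$, the paper then computes exactly your kernel $K=\ker(\rot|_N)=\{(\id,w):(A-\id)w\in L_\pi\text{ for all }A\in H_\pi\}$, observes $Z\subseteq K$, and concludes ``hence $\mathcal N_\pi$ is isomorphic to a subgroup of $\aff(\mathcal O)$.'' You have correctly spotted that this last inference is a non sequitur: the inclusion $Z\subseteq K$ only exhibits $\mathcal N_\pi\cong N/K$ as a \emph{quotient} of $\aff(\mathcal O)\cong N/Z$, and promoting ``quotient'' to ``subgroup'' requires splitting the extension $1\to K/Z\to\aff(\mathcal O)\to\mathcal N_\pi\to 1$. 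The paper does not do this.

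Your stage~3 is therefore attempting strictly more than the paper, but it remains a gap. The embedding $\aff(\mathcal O)\hookrightarrow\mathcal N_\pi\ltimes V$ is correct and shows the obstruction class in $H^2(\mathcal N_\pi;\Lambda\cap V)$ dies in $H^2(\mathcal N_\pi;V)$; your observation that, after rationalizing the translational data of $\pi$, the associated torus-valued $1$-cocycle $C\mapsto q(w_C)\bmod(\Lambda\cap V)$ lands in the torsion of $V/(\Lambda\cap V)$ is also right. But the final step---``trivialize it by conjugating by a single $(\id,t)$''---is exactly the assertion that this cocycle is a coboundary, and you give no argument for that: $\mathcal N_\pi$ is in general infinite, so no averaging is available, and $H^1$ of an infinite group with coefficients in a torsion module need not vanish. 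Your diagnosis of what is missing is accurate, but the proposed cure is not complete.
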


\begin{proof}
It is easy to see that $\mathcal N_\pi$ is a countable (Lie) group, and thus discrete. In fact, if $\pi$ is a lattice, then $\mathcal N_\pi$ is a conjugate of $\GL(n,\Z)$ inside $\GL(n)$ and hence countable; while for a general crystallographic group $\pi$, one has $\mathcal N_\pi\subset\mathcal N_{L_\pi}$.

A diffeomorphism $\phi\colon \mathcal O\to \mathcal O$ that preserves the affine structure of the flat orbifold $\mathcal O=\R^n/\pi$ endowed with the metric $\g_\pi$ lifts to an affine diffeomorphism $\widetilde\phi\colon\R^n\to\R^n$. Conversely, an affine diffeomorphism $\widetilde\phi$ of $\R^n$ descends to an affine diffeomorphism $\phi$ of $(\mathcal O,\g_\pi)$ if and only if $\widetilde\phi$ normalizes $\pi$. We thus have a surjective homomorphism from the normalizer of $\pi$ in $\aff(\R^n)$ to the group $\aff(\mathcal O)$ of affine diffeomorphisms of $(\mathcal O,\g_\pi)$, given by $\norm_{\aff(\R^n)}(\pi)\ni\widetilde\phi\mapsto\phi\in\aff(\mathcal O)$, whose kernel is the centralizer $\centr_{\aff(\R^n)}(\pi)$. This establishes the isomorphism:
\begin{equation*}
\aff(\mathcal O)\cong\norm_{\aff(\R^n)}(\pi)/\centr_{\aff(\R^n)}(\pi).
\end{equation*}
Note that the group of affine diffeomorphisms of the closed flat orbifold $\mathcal O=\R^n/\pi$ does not depend on the flat metric, but only on the isomorphism class of the crystallographic group $\pi$, by the Bieberbach Theorems.

The group $\mathcal N_\pi=\rot\big(\!\norm_{\aff(\R^n)}(\pi)\big)$ is isomorphic to the quotient of $\norm_{\aff(\R^n)}(\pi)$ by the kernel of the projection homomorphism $\rot\colon\norm_{\aff(\R^n)}(\pi)\to\norm_{\GL(n)}(H_\pi)$, which is given by $\norm_{\aff(\R^n)}(\pi)\cap(\{\mathrm{Id}\}\times\R^n)=\big\{(\mathrm{Id},w):Aw-w\in L_\pi,\ \mbox{for all}\,A\in H_\pi\big\}$.
Clearly, the latter contains $\centr_{\aff(\R^n)}(\pi)=\big\{(\mathrm{Id},w): Aw=w,\ \mbox{for all}\,A\in H_\pi\big\}$, and hence $\mathcal N_\pi$ is isomorphic to a subgroup of $\aff(\mathcal O)$, concluding the proof.
\end{proof}

\begin{remark}
In general, $\mathcal N_\pi$ is a \emph{proper} subgroup of $\aff_{\text{flat}}(\mathcal O)$. For instance, if $\R^2/\pi$ is the Klein bottle, then $L_\pi=\Z^2$, and the orthogonal representation of $H_\pi\cong\Z_2$ on $\R^2$ is by reflection about the $x$-axis, so $\centr_{\aff(\R^2)}(\pi)=\big\{(\mathrm{Id},w): w=(w_1,0)\in \Z^2 \big\}$, while $\norm_{\aff(\R^2)}(\pi)\cap(\{\mathrm{Id}\}\times\R^2)=\big\{(\mathrm{Id},w):w=(w_1,w_2)\in \Z^2\big\}$.
\end{remark}

\subsection{\texorpdfstring{Deformations of $\textbf{(X,G)}$-structures}{Deformations of (X,G)-structures}}\label{sub:defXG}
The Teichm\"uller space $\teichflat(M)$ and the moduli space $\Flat(M)$ of a flat manifold can also be described using the language of $(X,G)$-structures~\cite{thurston}. Given a Lie group $G$ and a $G$-homogeneous space $X$, an \emph{$(X,G)$-structure} on a manifold $M$ is a (maximal) atlas of charts on $M$ with values in $X$, whose transition maps are given by restrictions of elements of $G$. Setting $X=\R^n$ and $G=\Iso(\R^n)$, an $(X,G)$-structure on a manifold $M$ is precisely a flat Riemannian metric on $M$.

A general deformation theory of $(X,G)$-structures on a given manifold $M$ is discussed in~\cite{Baues2000,Goldman88,thurston}. 
Using the action of the diffeomorphism group $\Diff(M)$ on the space of $(X,G)$-structures on $M$, one defines the corresponding \emph{deformation space} and the \emph{moduli space} respectively as
\begin{equation*}
\mathcal D(M)=\{(X,G)\text{-structures on }M\}/\Diff_0(M), \qquad \mathcal M(M)=\{(X,G)\text{-structures on }M\}/\Diff(M),
\end{equation*}
where $\Diff_0(M)\subset\Diff(M)$ is the connected component of the identity. 
The \emph{mapping class group} of $M$ is defined as $\MCG(M)=\Diff(M)/\Diff_0(M)$, so that $\mathcal M(M)=\mathcal D(M)/\MCG(M)$.

\begin{proposition}
For $(X,G)=\big(\R^n,\Iso(\R^n)\big)$, the deformation space $\mathcal D(M)$ and the moduli space $\mathcal M(M)$ can be respectively identified with the Teichm\"uller space $\teichflat(M)$ and the moduli space $\Flat(M)$.
\end{proposition}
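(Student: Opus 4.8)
The plan is to identify both quotients through the developing map, reducing everything to the Bieberbach Theorems together with the Ehresmann--Thurston holonomy principle for $(X,G)$-structures (as developed in~\cite{thurston,Goldman88,Baues2000}). Recall $M=\R^n/\pi$ for a Bieberbach group $\pi$, so $\pi_1(M)=\pi$.

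The moduli space statement is almost immediate. Since an $(X,G)$-structure on $M$ is the same thing as a flat Riemannian metric and $\Diff(M)$ acts on structures by pull-back, two flat metrics lie in the same $\Diff(M)$-orbit precisely when they are isometric; as $\Flat(M)$ is by definition the set of isometry classes of flat metrics on $M$, this gives $\mathcal M(M)=\Flat(M)$ with no further input. Once the identification $\mathcal D(M)\cong\teichflat(M)$ is in place, the relation $\mathcal M(M)=\mathcal D(M)/\MCG(M)$ will match $\teichflat(M)/\mathcal N_\pi=\Flat(M)$ of Proposition~\ref{thm:discrquotient}, provided one checks that the residual action of $\MCG(M)=\Diff(M)/\Diff_0(M)$ on $\teichflat(M)$ is exactly the $\mathcal N_\pi$-action there; this is the consistency check I would carry out last.

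For the deformation space, the first step is the holonomy description. A flat metric $\g$ on the closed manifold $M$ is complete, so its developing map $D\colon\widetilde M\to\R^n$ is a global isometry onto Euclidean $\R^n$, and the associated holonomy $\rho_\g\colon\pi\to\Iso(\R^n)$ is a faithful representation with discrete cocompact image, well defined up to conjugation in $\Iso(\R^n)$; acting by $\Diff_0(M)$ changes $D$ but not $[\rho_\g]$. By the Ehresmann--Thurston principle the resulting holonomy map
\[
\operatorname{hol}\colon\mathcal D(M)\longrightarrow\Hom\!\big(\pi,\Iso(\R^n)\big)/\Iso(\R^n)
\]
is a local homeomorphism, and its image is the open set of conjugacy classes of faithful, discrete, cocompact representations. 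The second step is to identify this image with $\teichflat(M)=\O(n)\backslash\mathcal C_\pi$. If $\rho$ is such a representation, then $\rho(\pi)$ is a Bieberbach group isomorphic to $\pi$, so by part~II of the Bieberbach Theorems $\rho(\pi)=(A,v)\,\pi\,(A,v)^{-1}$ for some $(A,v)\in\aff(\R^n)$, and then $A\,H_\pi\,A^{-1}=H_{\rho(\pi)}\subset\O(n)$ forces $A\in\mathcal C_\pi$. Since part~II also implies that every automorphism of $\pi$ is induced by conjugation by an element of $\norm_{\aff(\R^n)}(\pi)$, one may replace $(A,v)$ so that $\rho$ is exactly the restriction to $\pi$ of conjugation by $(A,v)$, still with $A\in\mathcal C_\pi$. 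Now $\rho$ determines $A$ up to left multiplication by $\O(n)$: a different $v$ or the centralizer ambiguity leaves the linear part $A$ unchanged, while conjugating $\rho$ inside $\Iso(\R^n)=\O(n)\ltimes\R^n$ multiplies $A$ on the left by an orthogonal matrix. Hence $[\rho]\mapsto\O(n)\,A$ is a well-defined bijection from the image of $\operatorname{hol}$ onto $\teichflat(M)$, and composing it with $\operatorname{hol}$ yields the identification $\mathcal D(M)\cong\teichflat(M)$; the real-analyticity and the diffeomorphism type of Theorem~\ref{mainthm:teichdescription} can then be read off from the explicit model metrics $\g_A$ on $M$, those lifting to the inner product $(u,w)\mapsto\langle Au,Aw\rangle$ on $\R^n$, whose $\pi$-invariance is exactly the condition $A\in\mathcal C_\pi$.

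The hard part will be the injectivity of $\operatorname{hol}$ on $\mathcal D(M)$, that is, that two flat metrics on $M$ with conjugate holonomy are equivalent under $\Diff_0(M)$ and not merely under $\Diff(M)$; equivalently, that every flat metric on $M$ is $\Diff_0(M)$-equivalent to one of the model metrics $\g_A$. Via the developing map this comes down to showing that a diffeomorphism $f\colon M\to M$ which is an isometry between two flat metrics and induces the identity on $\pi$ lies in $\Diff_0(M)$. Lifting $f$ to $\widetilde M=\R^n$ and straightening by the developing maps, one obtains a diffeomorphism $E$ of $\R^n$ commuting with the standard $\pi$-action and equal to the identity plus a bounded, holonomy-equivariant correction term; the plan is to deform $E$ to the identity through $\pi$-equivariant diffeomorphisms, which then descends to an isotopy of $f$ to $\operatorname{id}_M$. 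This rigidity of flat metrics is the essential point: it is what forces $\mathcal D(M)$ to coincide with $\teichflat(M)$ rather than to be larger, and it explains why the ``exotic'' elements of $\pi_0\Diff(M)$ that may exist in high dimensions are never realized by isometries of flat metrics.
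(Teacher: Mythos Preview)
Your approach is essentially the same as the paper's---both factor through the holonomy representation and invoke the Bieberbach Theorems---but where the paper simply cites \cite[Prop.~1.6]{Baues2000} for the bijection $\mathcal D(M)\cong\Iso(\R^n)\backslash\Inj\big(\pi,\Iso(\R^n)\big)$ and then the generalized Dehn--Nielsen--Baer isomorphism $\MCG(M)\cong\Out(\pi)$ from \cite{charlap}, you attempt to prove the injectivity of $\operatorname{hol}$ by hand. That is a legitimate and somewhat more self-contained route; your direct observation that $\mathcal M(M)=\Flat(M)$ is tautological is also cleaner than the paper's detour through $\Out(\pi)$.

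There is, however, a gap in your sketch of the ``hard part''. You describe the lifted map $E$ as ``identity plus a bounded, holonomy-equivariant correction term'' and propose to straighten it. For a general $\pi$-equivariant diffeomorphism of $\R^n$ this straight-line homotopy $x\mapsto x+t\,p(x)$ need not stay inside the diffeomorphism group: $I+t\,Dp$ can become singular. The point you are missing is that no such generality is needed. Since $f$ is an \emph{isometry} between flat metrics, once you conjugate by the two developing maps (both landing in Euclidean $\R^n$), the resulting $E$ is a Euclidean rigid motion. Commuting with the translations in $L_\pi$ forces its linear part to be the identity, so $E$ is translation by some $c\in\R^n$; commuting with the full $\pi$-action then forces $Ac=c$ for every $A\in H_\pi$. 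Now the isotopy $E_t(x)=x+tc$ is manifestly through $\pi$-equivariant rigid motions, and it descends to an isotopy of $f$ to $\mathrm{id}_M$. This also shows that your closing remark about ``exotic'' elements of $\pi_0\Diff(M)$ is a red herring here: only affine lifts ever arise, so the possible failure of $\MCG(M)\to\Out(\pi)$ to be injective in high dimensions (which would, if anything, threaten the paper's citation of Dehn--Nielsen--Baer) is irrelevant to your argument.
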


\begin{proof}
Assume $M=\R^n/\pi$, where $\pi\subset\Iso(\R^n)$ is a Bieberbach group, and consider the set $\Inj\big(\pi,\Iso(\R^n)\big)$ of injective homomorphisms of $\pi$ into $\Iso(\mathds R^n)$. There is a left action of $\Iso(\R^n)$ on $\Inj\big(\pi,\Iso(\R^n)\big)$ by conjugation, i.e., composition with inner automorphisms. The deformation space $\mathcal D(M)$ is identified with the quotient $\Iso(\R^n)\backslash\Inj\big(\pi,\Iso(\R^n)\big)$, see \cite[Prop.~1.6]{Baues2000}. By the Bieberbach Theorems, $\Inj\big(\pi,\Iso(\R^n)\big)$ can be identified with $\mathcal C_\pi\times\R^n$. Using this identification, the action of $\Iso(\R^n)=\O(n)\ltimes\R^n$ on $\Inj\big(\pi,\Iso(\R^n)\big)$ is given by left multiplication, hence the quotient $\mathcal D(M)=\Iso(\R^n)\backslash\Inj\big(\pi,\Iso(\R^n)\big)$ is identified with the quotient $\teichflat(M)=\O(n)\backslash \mathcal C_\pi$, cf.~Definition~\ref{thm:classesorbits}.

By a generalization of the Dehn-Nielsen-Baer Theorem to flat manifolds, the action of $\Diff(M)$ on the loop space of $M$ induces an isomorphism from $\MCG(M)$ to the group $\Out(\pi)$ of outer automorphisms of $\pi$. Moreover, $\Out(\pi)$ is isomorphic to $\mathrm{Aff}(M)/\mathrm{Aff}_0(M)$, see \cite[Thm.~6.1]{charlap}. 
Using \cite[Lemma~6.1]{charlap}, 
it follows that the orbits of the action of this group on $\Iso(\R^n)\backslash\Inj\big(\pi,\Iso(\R^n)\big)$ are the same as the orbits of $\norm_{\aff(\R^n)}(\pi)$ acting by composition on the right with conjugations.
Under the identification of $\mathcal D(M)=\Iso(\R^n)\backslash\Inj\big(\pi,\Iso(\R^n)\big)$ with $\teichflat(M)=\O(n)\backslash\mathcal C_\pi$, the action of $\norm_{\aff(\R^n)}(\pi)$ coincides with the action of the group $\mathcal N_\pi$ (see Lemma~\ref{lemma:isomiff}) by right multiplication. Thus, $\mathcal M(M)$ is identified with $\Flat(M)$, cf.~Proposition~\ref{thm:discrquotient}.
\end{proof}

\begin{remark}\label{rmk:mcg}
By the results quoted above, given a flat manifold $M=\R^n/\pi$, there are isomorphisms
\begin{equation}\label{eq:mcg}
\Diff(M)/\Diff_0(M)=\MCG(M)\cong\Out(\pi)\cong \mathrm{Aff}(M)/\mathrm{Aff}_0(M).
\end{equation}
\end{remark}

\subsection{Algebraic description of Teichm\"uller space}\label{subsec:algebraicdescrpt}
Let $\mathcal O=\R^n/\pi$ be a closed flat orbifold and $\teichflat(\mathcal O)$ be its Teichm\"uller space. We now employ simple algebraic considerations to identify $\teichflat(\mathcal O)$ with a product of (noncompact) homogeneous spaces, proving Theorem~\ref{mainthm:teichdescription}.

\begin{proposition}\label{thm:charXpi}
The cone $\mathcal C_\pi$ defined in \eqref{eq:cpi} satisfies $\mathcal C_\pi=\O(n)\cdot\centr_{\GL(n)}(H_\pi)$.
\end{proposition}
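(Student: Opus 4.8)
\textbf{Proof strategy for Proposition~\ref{thm:charXpi}.}
The claim is an equality of subsets of $\GL(n)$, so the plan is to prove the two inclusions separately. The inclusion $\O(n)\cdot\centr_{\GL(n)}(H_\pi)\subseteq\mathcal C_\pi$ is the easy direction: if $A=B\hs S$ with $B\in\O(n)$ and $S\in\centr_{\GL(n)}(H_\pi)$, then for every $h\in H_\pi$ one has $A\hs h\hs A^{-1}=B\hs S\hs h\hs S^{-1}B^{-1}=B\hs h\hs B^{-1}$, which lies in $\O(n)$ because $h\in\O(n)$ and $B\in\O(n)$; hence $A\in\mathcal C_\pi$ by the defining condition \eqref{eq:cpi}.

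For the reverse inclusion $\mathcal C_\pi\subseteq\O(n)\cdot\centr_{\GL(n)}(H_\pi)$, the key is the reformulation already noted in the text: $A\hs H_\pi\hs A^{-1}\subset\O(n)$ is equivalent to $A^{\mathrm t}A\in\centr_{\GL(n)}(H_\pi)$. So fix $A\in\mathcal C_\pi$ and set $P:=A^{\mathrm t}A$, a symmetric positive-definite matrix commuting with every element of $H_\pi$. The plan is to take the unique symmetric positive-definite square root $S:=P^{1/2}$ and put $B:=A\hs S^{-1}$, so that $A=B\hs S$ tautologically; it then remains to check that $B\in\O(n)$ and that $S\in\centr_{\GL(n)}(H_\pi)$. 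The first is immediate: $B^{\mathrm t}B=S^{-1}A^{\mathrm t}A\hs S^{-1}=S^{-1}P\hs S^{-1}=S^{-1}S^2S^{-1}=\id$. For the second, one uses that the square root is a limit of polynomials in $P$ (or, equivalently, that $S$ is the unique symmetric positive-definite matrix with $S^2=P$): since every $h\in H_\pi$ commutes with $P$ and $H_\pi\subset\O(n)$ acts orthogonally, conjugation by $h$ preserves symmetry and positive-definiteness and fixes $P$, hence sends $S$ to another symmetric positive-definite square root of $P$, which by uniqueness must equal $S$; thus $h\hs S=S\hs h$ for all $h\in H_\pi$, i.e. $S\in\centr_{\GL(n)}(H_\pi)$.

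The one point deserving care is the justification that $S=P^{1/2}$ commutes with $H_\pi$, and I would make it airtight via the uniqueness of the positive-definite square root rather than an explicit polynomial approximation: if $h\in H_\pi$, then $(h\hs S\hs h^{-1})^2=h\hs S^2h^{-1}=h\hs P\hs h^{-1}=P$, and $h\hs S\hs h^{-1}$ is again symmetric positive-definite because $h^{-1}=h^{\mathrm t}$; uniqueness of such a square root forces $h\hs S\hs h^{-1}=S$. This is the main (and only) real step; everything else is formal. Combining the two inclusions yields $\mathcal C_\pi=\O(n)\cdot\centr_{\GL(n)}(H_\pi)$, as desired. As a remark, this gives at once the promised homogeneous-space picture of $\teichflat(\mathcal O)=\O(n)\backslash\mathcal C_\pi$: the decomposition $A=B\hs S$ is the polar decomposition, with $B$ the $\O(n)$-part, so passing to the quotient identifies $\teichflat(\mathcal O)$ with the space of symmetric positive-definite matrices in $\centr_{\GL(n)}(H_\pi)$, which is the starting point for the product decomposition in the proof of Theorem~\ref{mainthm:teichdescription}.
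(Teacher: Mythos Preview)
Your proof is correct and follows exactly the same route as the paper's: both directions are handled as you describe, with the nontrivial inclusion coming from the polar decomposition $A=OP$ with $P=(A^{\mathrm t}A)^{1/2}$ and the observation that $P$ centralizes $H_\pi$ because $A^{\mathrm t}A$ does. The paper's version is terser (it simply asserts that the square root of a matrix in the centralizer lies in the centralizer), whereas you spell out the uniqueness argument for the positive-definite square root, which is a welcome clarification rather than a different approach.
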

 
\begin{proof}
Clearly, $\O(n)\cdot\centr_{\GL(n)}(H_\pi)\subset\mathcal C_\pi$.
Choose $A\in\mathcal C_\pi$, and write the polar decomposition $A=OP$, with $P=(A^{\mathrm t}A)^\frac12$ and $O\in\O(n)$. Since $A^{\mathrm t}A$ centralizes $H_\pi$, also $P$ centralizes $H_\pi$. Thus, $A\in\O(n)\cdot\centr_{\GL(n)}(H_\pi)$, concluding the proof.
\end{proof}

The following is an immediate consequence of Proposition~\ref{thm:charXpi}  and Definition~\ref{thm:classesorbits}.

\begin{corollary}\label{cor:teichcyclicequivrel}
The Teichm\"uller space $\teichflat(\mathcal O)$ is identified with the quotient space $\centr_{\GL(n)}(H_\pi)/\simeq$, where $A\simeq B$ if there exists $O\in\O(n)$ with $B=O\cdot A$. Thus, it can be described as the space of right cosets $(\centr_{\GL(n)}(H_\pi)\cap\O(n))\backslash\centr_{\GL(n)}(H_\pi)$.
\end{corollary}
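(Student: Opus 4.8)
The plan is to deduce this directly from the two preceding items. Definition~\ref{thm:classesorbits} says $\teichflat(\mathcal O)=\O(n)\backslash\mathcal C_\pi$, while Proposition~\ref{thm:charXpi} says $\mathcal C_\pi=\O(n)\cdot\centr_{\GL(n)}(H_\pi)$; the latter means precisely that the subgroup $\centr_{\GL(n)}(H_\pi)\subset\GL(n)$ meets every left $\O(n)$-orbit in $\mathcal C_\pi$. So the first step is to observe that the composite of the inclusion $\centr_{\GL(n)}(H_\pi)\hookrightarrow\mathcal C_\pi$ with the orbit projection $\mathcal C_\pi\to\O(n)\backslash\mathcal C_\pi$ is surjective, and then to identify its fibers.

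For the fibers, I would note that two elements $A,B\in\centr_{\GL(n)}(H_\pi)$ lie in the same left $\O(n)$-orbit exactly when $B=OA$ for some $O\in\O(n)$; in that case $O=BA^{-1}$ is a product of two elements centralizing $H_\pi$, hence itself centralizes $H_\pi$, i.e.\ $O\in\O(n)\cap\centr_{\GL(n)}(H_\pi)$, and conversely every $O$ in this intersection satisfies $OA\simeq A$. This is exactly the relation $A\simeq B$ of the statement; it exhibits the fiber through $A$ as the left coset $\bigl(\O(n)\cap\centr_{\GL(n)}(H_\pi)\bigr)A$, so that, as a set, $\teichflat(\mathcal O)=\O(n)\backslash\mathcal C_\pi$ is in bijection with the right-coset space $\bigl(\centr_{\GL(n)}(H_\pi)\cap\O(n)\bigr)\backslash\centr_{\GL(n)}(H_\pi)$.

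Finally I would upgrade this bijection to a real-analytic diffeomorphism. Here I would reuse the real-analytic polar decomposition already invoked in the proof of Proposition~\ref{thm:charXpi}: sending $A\in\mathcal C_\pi$ to $P=(A^{\mathrm t}A)^{1/2}\in\centr_{\GL(n)}(H_\pi)$ is a real-analytic retraction of $\mathcal C_\pi$ onto $\centr_{\GL(n)}(H_\pi)$ whose fibers are exactly the left $\O(n)$-orbits, which shows that the set bijection above matches the smooth structure that Definition~\ref{thm:classesorbits} puts on $\O(n)\backslash\mathcal C_\pi$ with the homogeneous-space structure on the coset quotient (a manifold since $\centr_{\GL(n)}(H_\pi)$ is a closed, indeed real-algebraic, subgroup of $\GL(n)$ and $\O(n)\cap\centr_{\GL(n)}(H_\pi)$ a closed subgroup of it). I do not expect a real obstacle here: the only points needing a moment's care are the automatic membership $BA^{-1}\in\centr_{\GL(n)}(H_\pi)$ and the bookkeeping that the ``orbit space'' and ``coset space'' structures agree, both of which are routine once the polar decomposition is in hand.
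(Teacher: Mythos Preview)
Your argument is correct and is exactly the ``immediate consequence'' the paper has in mind: the paper gives no proof beyond citing Proposition~\ref{thm:charXpi} and Definition~\ref{thm:classesorbits}, and your surjectivity-plus-fiber-identification (including the key observation that $O=BA^{-1}\in\centr_{\GL(n)}(H_\pi)$ automatically) is precisely what makes that deduction go through. Your additional paragraph upgrading the set bijection to a real-analytic diffeomorphism via polar decomposition goes beyond what the corollary itself asserts, but it is correct and anticipates what the paper uses tacitly in the subsequent computation of $\teichflat(\mathcal O)$.
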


According to \eqref{eq:conjholonomy}, different flat metrics on the same closed orbifold have conjugate holonomy groups. A more precise statement follows from Proposition~\ref{thm:charXpi}:

\begin{corollary}\label{thm:sameholonomy}
Any flat metric $\g$ on $\mathcal O=\R^n/\pi$ is isometric to a (flat) metric on $\mathcal O$ that has the same holonomy as $\g_\pi$.
\end{corollary}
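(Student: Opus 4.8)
The plan is to unwind the statement using the two results just established: Corollary~\ref{cor:teichcyclicequivrel}, which identifies $\teichflat(\mathcal O)$ with the coset space of $\centr_{\GL(n)}(H_\pi)$ modulo $\centr_{\GL(n)}(H_\pi)\cap\O(n)$, and Proposition~\ref{thm:charXpi}, which says $\mathcal C_\pi=\O(n)\cdot\centr_{\GL(n)}(H_\pi)$. Concretely, let $\g$ be an arbitrary flat metric on $\mathcal O=\R^n/\pi$. By the Bieberbach Theorems, $\g=\g_{\pi'}$ where $\pi'=(A,v)\,\pi\,(A,v)^{-1}$ for some $(A,v)\in\aff(\R^n)$, and since $\pi'$ must be conjugate inside $\Iso(\R^n)$ — rather than merely $\aff(\R^n)$ — we may assume $A\in\mathcal C_\pi$, so that $H_{\pi'}=A\,H_\pi\,A^{-1}\subset\O(n)$.

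The key step is to replace $A$ by a matrix that actually centralizes $H_\pi$. First I would invoke Proposition~\ref{thm:charXpi} to write $A=OP$ with $O\in\O(n)$ and $P\in\centr_{\GL(n)}(H_\pi)$ (this is just the polar decomposition argument in the proof of that proposition: $P=(A^{\mathrm t}A)^{1/2}$ centralizes $H_\pi$ because $A^{\mathrm t}A$ does). Then $\pi'':=(P,0)\,\pi\,(P,0)^{-1}$ has holonomy $H_{\pi''}=P\,H_\pi\,P^{-1}=H_\pi$, so the flat metric $\g_{\pi''}$ has the same holonomy as $\g_\pi$. It remains to check that $\g_{\pi''}$ is isometric to $\g_{\pi'}=\g$. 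For this I would apply Lemma~\ref{lemma:isomiff}: the two metrics $\g_{\pi'}$ and $\g_{\pi''}$ arise from conjugating $\pi$ by $(A,v)$ and by $(P,0)$ respectively, and the relevant ``rotational parts'' $A$ and $P$ differ by left multiplication by $O\in\O(n)$, which is exactly the condition (with $C=\id\in\mathcal N_\pi$) for the metrics to be isometric. Chaining these together gives $\g\cong\g_{\pi''}$ with $H_{\pi''}=H_{\pi}$, as desired.

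The only real obstacle is bookkeeping with the translational components: Lemma~\ref{lemma:isomiff} as stated compares $\g_\pi$ with $\g_{\pi'}$ for $\pi'=(A,v)\pi(A,v)^{-1}$, and one must be slightly careful that the statement ``$A=BC$ with $B\in\O(n)$, $C\in\mathcal N_\pi$'' is genuinely a condition on the linear parts alone, so that the translational parts $v$ (for $\g_{\pi'}$) and $0$ (for $\g_{\pi''}$) are irrelevant to whether the metrics are isometric — which indeed is the content of that lemma, since given such a decomposition one simply solves for the translation $w$ making $(B,w)$ the lifted isometry. With that observation in hand, the argument is a two-line composition and the corollary follows. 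I would also remark that this gives a clean a posteriori justification for the normalization made at the start of the proof of Proposition~\ref{prop:limitaction}, where the holonomy groups of the converging sequence were taken to be literally equal rather than merely conjugate.
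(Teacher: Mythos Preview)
Your proof is correct and follows essentially the same approach as the paper's: decompose $A=OP$ via Proposition~\ref{thm:charXpi}, conjugate $\pi$ by the centralizing factor $P$ to obtain $\pi''$ with $H_{\pi''}=H_\pi$, and observe that $\g_{\pi'}$ and $\g_{\pi''}$ are isometric since they differ by conjugation by an element $(O,\cdot)\in\Iso(\R^n)$ (the paper does this last step directly rather than routing through Lemma~\ref{lemma:isomiff}, but the content is the same). One small slip in your write-up: the phrase ``$\pi'$ must be conjugate inside $\Iso(\R^n)$'' should read ``$\pi'$ must \emph{lie} inside $\Iso(\R^n)$'' --- $\pi'$ need not be $\Iso(\R^n)$-conjugate to $\pi$, but the containment $\pi'\subset\Iso(\R^n)$ is precisely what forces $H_{\pi'}=A\,H_\pi\,A^{-1}\subset\O(n)$ and hence $A\in\mathcal C_\pi$.
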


\begin{proof}
There is a crystallographic group $\pi'\subset\Iso(\R^n)$ such that $\g=\g_{\pi'}$. By Proposition~\ref{thm:charXpi}, there exist $O\in\O(n)$ and 
$A\in\centr_{\GL(n)}(H_\pi)$ such that $\pi'=OA\cdot\pi\cdot A^{-1}O^\mathrm t$. Thus, $\g_{\pi'}$ is isometric to $\g_{\pi''}$, where $\pi''=A\cdot\pi\cdot A^{-1}$, and $H_{\pi''}=H_\pi$ since $A$ normalizes $H_\pi$.
\end{proof}

In order to achieve a more precise description of $\centr_{\GL(n)}({H_\pi})$, leading to the proof of Theorem~\ref{mainthm:teichdescription} via Corollary~\ref{cor:teichcyclicequivrel}, we use the decomposition of $\R^n$ into $H_\pi$-isotypic components. By a result of Hiss and Szczepa\'nski~\cite{hiss-szczepa}, see Theorem~\ref{thm:redholonomy}, there are always nontrivial invariant subspaces of the orthogonal $H_\pi$-rep\-resentation on $\R^n$. Recall that a nonzero invariant subspace $V$ is \emph{irreducible} if it contains no proper invariant subspaces; or, equivalently, if every nonzero element of the vector space $\mathrm{End}_{H_\pi}(V)$ of linear equivariant endomorphisms of $V$ is an isomorphism. In this situation, $\mathrm{End}_{H_\pi}(V)$ is an associative real division algebra, hence isomorphic to one of $\R$, $\C$, or $\H$. The irreducible $V$ is called of \emph{real}, \emph{complex}, or \emph{quaternionic} type, according to the isomorphism type of $\mathrm{End}_{H_\pi}(V)$.

Consider the decomposition of the orthogonal $H_\pi$-representation into irreducibles,
\begin{equation}\label{eq:Phiisotypicdec}
\R^n=\underbrace{V_{1,1}\oplus\ldots\oplus V_{1,m_1}}_{W_1}\oplus\ldots\oplus \underbrace{V_{l,1}\oplus\ldots\oplus V_{l,m_l}}_{W_l},
\end{equation}
where each $V_{i,j}$ is irreducible and $V_{i,j}$ is isomorphic to $V_{i',j'}$ if and only if $i=i'$, so that $W_i=\bigoplus_{j=1}^{m_i}V_{i,j}$, $i=1,\ldots,l$ are the so-called \emph{isotypic components}.
Let $\mathds K_i$ be the real division algebra $\R$, $\C$, or $\H$, according to $W_i$ consisting of irreducibles $V_{i,j}$ of real, complex, or quaternionic type.

As $\centr_{\GL(n)}(H_\pi)=\mathrm{End}_{H_\pi}(\R^n)\cap\GL(n)$, and $\mathrm{End}_{H_\pi}(\R^n)\cong\prod_{i=1}^l\mathrm{End}_{H_\pi}(W_i)$ by Schur's Lemma, it follows that there is an isomorphism:
\begin{equation}\label{eq:centrPhi}
\centr_{\GL(n)}({H_\pi})\cong\prod_{i=1}^l\GL(m_i,\mathds K_i).
\end{equation}
For each isotypic component $W_i$, it is clear that $\mathrm{End}_{H_\pi}(W_i)\cap \O(W_i)\cong \O(m_i,\mathds K_i)$,
where
\begin{equation*}
\O(m,\mathds K) :=\begin{cases}\O(m),&\text{if $\mathds K=\R$;}\\
\U(m),&\text{if $\mathds K=\C$;}\\ \Sp(m),&\text{if $\mathds K=\H$.}\end{cases}
\end{equation*}
Thus, from Corollary~\ref{cor:teichcyclicequivrel} and these isomorphisms,
\begin{equation*}
\teichflat(\mathcal O)\cong\frac{\centr_{\GL(n)}({H_\pi})}{\centr_{\GL(n)}({H_\pi})\cap\O(n)}\cong\frac{\prod_{i=1}^l \mathrm{End}_{H_\pi}(W_i)\cap \GL(W_i)}{\prod_{i=1}^l\mathrm{End}_{H_\pi}(W_i)\cap \O(W_i)}\cong\prod_{i=1}^l\frac{\GL(m_i,\mathds K_i)}{\O(m_i,\mathds K_i)},
\end{equation*}
which concludes the proof of Theorem~\ref{mainthm:teichdescription} in the Introduction.

\section{Examples of Teichm\"uller spaces}\label{sec:exa}

In this section, we apply Theorem~\ref{mainthm:teichdescription} to compute the Teichm\"uller space of some flat manifolds and orbifolds.

\subsection{Flat tori}\label{subsec:tori}
If $\pi=L_\pi$ is a lattice, then $H_\pi$ is trivial and $M=\R^n/\pi$ is a flat torus $T^n$. In this case, Theorem~\ref{mainthm:teichdescription} gives:
\begin{equation}\label{eq:teichflattorus}
\teichflat(T^n)\cong\frac{\GL(n)}{\O(n)}\cong\R^{n(n+1)/2}.
\end{equation}
Furthermore, it is easy to see that $\norm_{\aff(\R^n)}(\pi)=\R^n\ltimes\GL(n,\Z)$, so $\mathcal N_\pi\cong\GL(n,\Z)$ is the discrete subgroup of integer matrices with determinant $\pm1$. Thus, $\Flat(T^n)=\O(n)\backslash \GL(n)/\GL(n,\Z)$, cf.~Wolf~\cite[Cor]{Wolf73}.

\begin{remark}
Flat metrics on the $2$-torus $T^2$ are often parametrized using the upper half plane, identifying each flat metric $\g_\pi$ with $w/z\in\C$, where $z,w\in\C$ are chosen so that $\pi=\operatorname{span}_\Z\{z,w\}$, $z\in\R$, $z>0$, and $\operatorname{Im}w>0$. This parametrization clearly identifies homothetic metrics, while they are distinct in $\teichflat(T^2)$ and $\Flat(T^2)$.

The moduli space $\Flat(T^2)=\teichflat(T^2)/\GL(2,\Z)$ has two singular strata of dimension $1$, corresponding to hexagonal and square lattices. Furthermore, it has one end whose boundary at infinity is a ray $[0,+\infty)$, corresponding to the lengths of circles to which $T^2$ can collapse. For details, see \cite[\S 12.1]{fm}.
\end{remark}

\subsection{Cyclic holonomy}
Assume that $H_\pi\subset\O(n)$ is a cyclic group, and choose a generator $A\in H_\pi$.
Up to rewriting $A$ in its real canonical form, we may assume that it is block diagonal, with $1\times 1$ and $2\times 2$ blocks, where $m_1$ diagonal entries are $1$, $m_2$ diagonal entries are $-1$, and each of the $2\times 2$ blocks is a rotation matrix
\begin{equation}\label{eq:rotmatrix}
\phantom{,\qquad \theta_i\in (0,\pi), \; i=3, \dots, l,}
\begin{pmatrix*}[r]
\cos \theta_i&-\sin\theta_i\\
\sin\theta_i&\cos\theta_i
\end{pmatrix*},\qquad \theta_i\in (0,\pi), \; i=3, \dots, l,
\end{equation}
appearing $m_i$ times.
Therefore, the decomposition \eqref{eq:Phiisotypicdec} has $2$ isotypic components of real type with dimensions $m_1$ and $m_2$, namely, $W_1=\ker(A-\mathrm{Id})\neq\{0\}$, and $W_2=\ker(A+\mathrm{Id})$, and all other isotypic components $W_i$, $i=3, \dots, l$, are of complex type and consist of $m_i$ copies of $\R^2$.

The centralizer $\centr_{\GL(n)}(H_\pi)$ consists of block diagonal matrices whose first two blocks (corresponding to $W_1$ and $W_2$) are any invertible $m_1\times m_1$ and $m_2\times m_2$ matrices, while the remaining blocks are $m_i\times m_i$ invertible matrices which are also complex linear.
In other words, there is an isomorphism
\[\centr_{\GL(n)}(H_\pi)\cong\GL(m_1,\R)\times\GL(m_2,\R)\times\prod_{i=3}^{l}\GL(m_i,\C).\]
Therefore, according to Theorem~\ref{mainthm:teichdescription}, the Teichm\"uller space of $M=\R^n/\pi$ is:
\begin{equation*}
\teichflat(M)\cong\frac{\GL(m_1,\R)}{\O(m_1)}\times \frac{\GL(m_2,\R)}{\O(m_2)}\times \prod_{i=3}^l \frac{\GL(m_i,\C)}{\U(m_i)}\cong\R^d,
\end{equation*}
where $d=\tfrac12m_1(m_1+1)+\tfrac12m_2(m_2+1)+\sum_{i=3}^{l}m_{i}^2$.

The case in which $H_\pi\cong\Z_p$, with $p$ prime, is particularly interesting~\cite[\S~IV.7]{charlap}. If $p=2$, then all $H_\pi$-isotypic components are of real type and hence $\teichflat(M)\cong\R^d$, with $d=\frac12m_1(m_1+1)+\tfrac12 m_2(m_2+1)$. For instance, if $M=\R^2/\pi$ is the Klein bottle, then $m_1=m_2=1$ and $\teichflat(M)\cong\R^2$.
If $p>2$, then the angles in \eqref{eq:rotmatrix} are $\theta_i=\frac{2\pi q_i}{p}$, for some $q_i\in\Z$, $i=1,\dots,p-1$, with $q_i\not\equiv q_j\mod p$. A special case is that of so-called \emph{generalized Klein bottles}~\cite[\S~IV.9]{charlap}, which have dimension $p$, and holonomy $H_\pi\cong\Z_p\subset\O(p)$ generated by a matrix with characteristic polynomial $\lambda^p-1$. Thus, in this case $\theta_i=\frac{2\pi i}{p}$ for each $i=1,\dots,p-1$ and all $m_i=1$ except for $m_2=0$, so $l=\frac{1}{2}(p-1)$ and $\teichflat(M)\cong\R^{\frac{p+1}{2}}$, cf.~\cite[Thm~9.1, p.~165]{charlap}.

\subsection{\texorpdfstring{Flat $2$-orbifolds}{Flat two-orbifolds}}\label{subsec:flat2orbifolds}
As listed in Table~\ref{table:flat2orbifolds}, there are 17 affine equivalence classes of flat $2$-orbifolds. Their Teichm\"uller spaces can be computed using Theorem~\ref{mainthm:teichdescription} as follows:
\begin{enumerate}
\item The Klein bottle, M\"obius band, and cylinder have holonomy group $\Z_2$ generated by reflection about an axis, and hence have $2$ inequivalent $1$-dimensional irreducibles. Thus, in these cases, $\teichflat(\mathcal O)=\GL(1,\R)/\O(1)\times\GL(1,\R)/\O(1)\cong\R^2$. The case of the $2$-torus is discussed in \eqref{eq:teichflattorus};
\item The flat $2$-orbifolds $D^2(;3,3,3)$, $D^2(;2,3,6)$, $D^2(;2,4,4)$, $D^2(4;2)$, $D^2(3;3)$, $S^2(3,3,3;)$, $S^2(2,3,6;)$, and $S^2(2,4,4;)$ have irreducible holonomy representation, and hence $\teichflat(\mathcal O)=\GL(1,\R)/\O(1)\cong\R$;
\item The flat $2$-orbifolds $D^2(;2,2,2,2)$, $D^2(2;2,2)$, $D^2(2,2;)$, and $\R P^2(2,2;)$ have holonomy group $D_2\cong\Z_2\oplus\Z_2$, generated by reflections about the coordinate axes, and hence $\teichflat(\mathcal O)=\GL(1,\R)/\O(1)\times\GL(1,\R)/\O(1)\cong\R^2$;
\item The flat $2$-orbifold $S^2(2,2,2,2;)$ has holonomy group $\Z_2$, generated by the rotation of $\pi$, and hence $1$ isotypic component consisting of $2$ copies of the nontrivial $1$-dimensional representation. Thus, $\teichflat(S^2(2,2,2,2;))=\GL(2,\R)/\O(2)\cong\R^3$.
\end{enumerate}

\subsection{\texorpdfstring{Flat $3$-manifolds}{Flat three-manifolds}}\label{subsec:flat3manifolds}
As mentioned in Section~\ref{sec:prelims}, there are 10 affine equivalence classes of closed flat $3$-manifolds, described in Wolf~\cite[Thm.~3.5.5, 3.5.9]{Wolfbook}. These manifolds are labeled by the corresponding Bieberbach groups, denoted $\mathcal G_i$, $i=1,\dots,6$, in the orientable case, and $\mathcal B_i$, $i=1,\dots,4$, in the non-orientable case. Their Teichm\"uller space can be computed using Theorem~\ref{mainthm:teichdescription}, reobtaining the results of \cite{Kang06,KangKim03}:
\begin{enumerate}
\item The flat $3$-manifold corresponding to $\mathcal G_1$ is the $3$-torus, see \eqref{eq:teichflattorus};
\item The flat $3$-manifolds corresponding to $\mathcal G_3$, $\mathcal G_4$, and $\mathcal G_5$ have holonomy isomorphic to $\Z_k$, with $k=3,4,6$, respectively, generated by a block diagonal matrix $A\in\O(3)$ with one eigenvalue $1$ and a $2\times 2$ block \eqref{eq:rotmatrix} with $\theta=\frac{2\pi}{k}$. Thus, for these manifolds, $\teichflat(M)\cong\GL(1,\R)/\O(1)\times\GL(1,\C)/\U(1)\cong\R^2$;
\item The flat $3$-manifolds corresponding to $\mathcal G_6$, $\mathcal B_3$, and $\mathcal B_4$ have holonomy isomorphic to $\Z_2\oplus\Z_2$, generated by $A_1=\operatorname{diag}(1,-1,1)$ and $A_2=\operatorname{diag}(\pm1,1,-1)$. In all cases, there are $3$ inequivalent $1$-dimensional irreducibles. Thus, for such $M$, $\teichflat(M)\cong\GL(1,\R)/\O(1)\times\GL(1,\R)/\O(1)\times\GL(1,\R)/\O(1)\cong\R^3$;
\item The $3$-manifolds corresponding to $\mathcal G_2$, $\mathcal B_1$, and $\mathcal B_2$ have holonomy isomorphic to $\Z_2$, generated by a diagonal matrix $A\in\O(3)$ with eigenvalues $\pm1$, one with multiplicity $1$ and another with multiplicity $2$. Thus, the Teichm\"uller space of these manifolds is $\teichflat(M)\cong\GL(1,\R)/\O(1)\times\GL(2,\R)/\O(2)\cong\R^4$.
\end{enumerate}

Comparing the above computations, it follows that closed flat $3$-manifolds that have isomorphic holonomy groups also have diffeomorphic Teichm\"uller spaces. This coincidence, however, is of course not expected to hold in higher dimensions. 

The computation of the groups $\mathcal N_\pi$ for the above manifolds, and hence of the moduli space of flat metrics $\Flat(M)$, can be found in Kang~\cite[Thm.~4.5]{Kang06}.

\subsection{Kummer surface} 
The \emph{Kummer surface} is given by $\mathcal O=T^4/\Z_2$, where $\Z_2$ acts via the antipodal map on each coordinate of $T^4$. This is a $4$-dimensional flat orbifold with 16 conical singularities, whose desingularization is a Calabi-Yau \emph{$K3$ surface} (which is not flat, but admits Ricci flat metrics). 
The holonomy representation on $\R^4$ has one isotypic component consisting of 4 copies of the nontrivial $\Z_2$-representation. Thus, its Teichm\"uller space is $\teichflat(\mathcal O)\cong\GL(4,\R)/\O(4)\cong\R^{10}$.

\subsection{Joyce orbifolds}
There are two interesting examples of $6$-dimensional flat orbifolds that, similarly to the Kummer surface above, can be desingularized to Calabi-Yau manifolds as shown by Joyce~\cite{joyce-cy}.
The first, $\mathcal O_1=T^6/\Z_4$, also appears in the work of Vafa and Witten, and has holonomy generated by the transformation $\operatorname{diag}(-1,i,i)$ of $\C^3\cong\R^6$. Thus, its Teichm\"uller space is $\teichflat(\mathcal O_1)\cong\GL(2,\R)/\O(2)\times \GL(2,\C)/\U(2)\cong\R^7$. The second, $\mathcal O_2=T^6/\Z_2\oplus\Z_2$, has holonomy generated by the transformations $\operatorname{diag}(1,-1,-1)$ and $\operatorname{diag}(-1,1,-1)$ of $\C^3\cong\R^6$, and thus $\teichflat(\mathcal O_2)\cong\GL(2,\R)/\O(2)\times \GL(2,\R)/\O(2)\times \GL(2,\R)/\O(2)\cong\R^9$.

\section{\texorpdfstring{Classification of collapsed limits of flat $3$-manifolds}{Classification of collapsed limits of flat three-manifolds}}\label{sec:examples}

In this section, we analyze the collapsed limits of closed flat $3$-manifolds to prove Theorem~\ref{mainthm:2orbifolds}.

For completeness, let us briefly discuss the trivial situation of collapse of flat manifolds in dimensions $<3$. In dimension $1$, the only closed (flat) manifold is $S^1$, and its Teichm\"uller space is clearly $1$-dimensional. In this case, the only possible collapse is to a point.

The $2$-dimensional closed flat manifolds are the $2$-torus $T^2$ and the Klein bottle $K^2$. From Proposition~\ref{prop:toritotori}, the only possible collapsed limits of $T^2$ are a point or a circle. The Bieberbach group $\pi$ corresponding to the Klein bottle $K^2=\R^2/\pi$ is generated by a lattice $L_\pi$, whose basis $\{v_1,v_2\}$ consists of orthogonal vectors, and $\left(A,\frac12v_1\right)$, where $A$ is the reflection about the line spanned by $v_1$. Thus, $H_\pi\cong\Z_2$ has isotypic components $W_1=\operatorname{span}\{v_1\}$ and $W_2=\operatorname{span}\{v_2\}$, which are the only two possible directions along which flat metrics on $K^2$ can collapse. The limit obtained by collapsing $W_2$ is clearly $S^1$. By Proposition~\ref{prop:limitaction}, the limit obtained by collapsing $W_1$ is the orbit space of the reflection $\Z_2$-action on the circle $S^1$, which is a closed interval. Therefore, $K^2$ can collapse to a point, to a circle, or to a closed interval. Geometrically, these can be seen as shrinking the lengths of either pair of opposite sides of the rectangle with boundary identifications that customarily represents $K^2$.

To analyze the possible collapses of closed flat $3$-manifolds we proceed case by case, in the same order as in Subsection~\ref{subsec:flat3manifolds}, following the notation of Wolf~\cite[Thm.~3.5.5]{Wolfbook}. The first (trivial) case $\pi=\mathcal G_1$ is that of the $3$-torus $T^3$, which by Proposition~\ref{prop:toritotori} can only collapse to a point, to $S^1$, or to $T^2$. In the remaining cases $\R^3/\pi$, we denote by $\{v_1,v_2,v_3\}$ a basis of the lattice $L_\pi$ and identify each vector $v\in L_\pi$ with $(\mathrm{Id},v)\in\pi$. Henceforth, we ignore the case of iterated collapses, that is, if a flat manifold $M$ collapses to a flat orbifold $\mathcal O_1$ and this orbifold collapses to another flat orbifold $\mathcal O_2$, then clearly $M$ can be collapsed (directly) to $\mathcal O_2$.

\subsection{\texorpdfstring{Cases with $2$-dimensional Teichm\"uller space}{Cases with 2-dimensional Teichm\"uller space}}
There are three flat $3$-manifolds with $2$-dimensional Teichm\"uller space; namely those with Bieberbach groups $\mathcal G_3$, $\mathcal G_4$, and $\mathcal G_5$, see Subsection~\ref{subsec:flat3manifolds} (2).

\begin{example}[Case $\mathcal G_3$]\label{exa:Z3}
The basis of the lattice $L_\pi$ of the Bieberbach group $\pi=\mathcal G_3$ is such that $v_1$ is orthogonal to both $v_2$ and $v_3$, $\Vert v_2\Vert=\Vert v_3\Vert$, and $v_2$ and $v_3$ span a $2$-dimensional hexagonal lattice. This group $\pi$ is generated by $L_\pi$ and $\big(A,\frac13v_1\big)$, where $A$ fixes $v_1$ and rotates its orthogonal complement by $\frac{2\pi}{3}$. Thus, $H_\pi\cong\Z_3$ and its two isotypic components are $W_1=\operatorname{span}\{v_1\}$ and $W_2=\operatorname{span}\{v_2,v_3\}$. Collapsing $W_2$, the limit is clearly $S^1$. From Proposition~\ref{prop:limitaction}, the limit obtained by collapsing $W_1$ is the orbit space of a $\Z_3$-action on the flat $2$-torus given by the quotient of $W_2\cong\R^2$ by the hexagonal lattice. A generator of $\Z_3$ acts on $W_2$ by (clockwise) rotation of angle $\frac{2\pi}{3}$, which leaves invariant the hexagonal lattice, and hence descends to the relevant $\Z_3$-action on $T^2$. The parallelogram in $W_2$ with vertices $0$, $v_2$, $v_3$, and $v_2+v_3$ is a fundamental domain for this action. The orbit space $T^2/\mathds Z_3$ is easily identified as the $2$-orbifold $S^2(3,3,3;)$ by analyzing the boundary identifications induced on this fundamental domain.
\end{example}

\begin{example}[Case $\mathcal G_4$]\label{exa:Z4}
The basis of the lattice $L_\pi$ of the Bieberbach group $\pi=\mathcal G_4$ consists of pairwise orthogonal vectors $v_1$, $v_2$, and $v_3$, with $\Vert v_2\Vert=\Vert v_3\Vert$. This group $\pi$ is generated by $L_\pi$ and $\left(A,\frac14v_1\right)$, where $A$ fixes $v_1$ and rotates its orthogonal complement by $\frac\pi2$. Thus, $H_\pi\cong\Z_4$ and the two isotypic components are $W_1=\operatorname{span}\{v_1\}$ and $W_2=\operatorname{span}\{v_2,v_3\}$. The limit obtained by collapsing $W_2$ is clearly $S^1$. Collapsing $W_1$, the limit is the orbit space of a $\Z_4$-action on the flat $2$-torus given by the quotient of $W_2\cong\R^2$ by the square lattice generated by $v_2$ and $v_3$. A generator of $\Z_4$ acts on $W_2$ by (clockwise) rotation of angle $\frac\pi2$, which leaves invariant the square lattice, and hence descends to the relevant $\Z_4$-action on $T^2$. The square with vertices $0$, $v_2$, $v_3$, and $v_2+v_3$ is a fundamental domain for this action, which is generated by the rotation of angle $\frac{\pi}{4}$ around its center. The orbit space $T^2/\Z_4$ is identified with $D^2(4;2)$, which has a singular point on the boundary corresponding to the orbit of the vertices of the square, and an interior singular point corresponding to the center of the square which is fixed by $\Z_4$.
\end{example}

\begin{example}[Case $\mathcal G_5$]\label{exa:Z6}
The Bieberbach group $\pi=\mathcal G_5$ is generated by the same lattice $L_\pi$ as in Example~\ref{exa:Z3} and $\left(A,\frac16v_3\right)$, where $A$ fixes $v_1$ and rotates its orthogonal complement by $\frac{\pi}{3}$. Thus, $H_\pi\cong\Z_6$ and the two isotypic components are again $W_1=\operatorname{span}\{v_1\}$ and $W_2=\operatorname{span}\{v_2,v_3\}$. The limit obtained collapsing $W_2$ is clearly~$S^1$. Collapsing $W_1$, the limit is the orbit space of a $\Z_6$-action on the flat $2$-torus given by the quotient of $W_2\cong\R^2$ by the hexagonal lattice.
A generator of $\Z_6$ acts on $W_2$ by (clockwise) rotation of angle $\frac\pi3$, which leaves invariant the hexagonal lattice, and hence descends to the relevant $\Z_6$-action on $T^2$. The equilateral triangle in $W_2$ with vertices $0$, $v_2$, and $v_3$ is a fundamental domain for this action, which is generated by the rotation of angle $\frac\pi3$ around its center. Therefore, the orbit space $T^2/\Z_6$ is identified with $D^2(3;3)$, which has a singular point on the boundary corresponding to the orbit of the vertices of the triangle, and an interior singular point corresponding to the center of the triangle which is fixed by $\Z_6$. Alternatively, this orbifold can be seen as the quotient of $S^2(3,3,3;)$ by an involution given by reflection about the \emph{equator} through one of the singular points. In fact, the subaction of $\Z_3\triangleleft \Z_6$ on $T^2$ has orbit space $T^2/\Z_3=S^2(3,3,3;)$ as described in Example~\ref{exa:Z3}, and hence the orbit space $T^2/\Z_6$ is given by $S^2(3,3,3;)/\Z_2=D^2(3;3)$.
\end{example}

Note that the flat $2$-orbifolds $S^2(3,3,3;)$, $D^2(4;2)$, and $D^2(3;3)$ that arise as collapsed limits of the above flat $3$-manifolds $M$ with $2$-dimensional Teichm\"uller space have irreducible holonomy (see Subsection~\ref{subsec:flat2orbifolds}). Thus, the Teichm\"uller space of such orbifolds is $1$-dimensional. This is in accordance with the stratification of the ideal boundary of $\teichflat(M)\cong\R^2$ by the Teichm\"uller spaces of the collapsed limits of $M$, which also include a point and $S^1$, that have $0$- and $1$-dimensional Teichm\"uller spaces respectively.

\begin{remark}
Similar conclusions regarding the ideal boundary of Teichm\"uller spaces hold in general, for instance in the higher dimensional cases discussed below, provided that two subtleties are taken into account. First, if a flat orbifold $\mathcal O$ is a collapsed limit of a flat manifold $M$, it is not necessarily true that \emph{all} flat metrics on $\mathcal O$ arise as collapsed limits of flat metrics on $M$. Thus, parts of the Teichm\"uller space of $\mathcal O$ might be absent from the stratum of the boundary of $\teichflat(M)$ that corresponds to collapse to $\mathcal O$. Second, there may be a continuum of inequivalent ways to collapse $M$ to $\mathcal O$, so that the stratum of the boundary of $\teichflat(M)$ that corresponds to collapse to $\mathcal O$ may contain a continuum of copies of $\teichflat(O)$.

Recall that the Gromov-Hausdorff distance between the metric spaces given by $M$ equipped with different flat metrics \emph{does not extend continuously} to the ideal boundary of $\teichflat(M)$, as discussed in the Introduction.
\end{remark}

\subsection{\texorpdfstring{Cases with $3$-dimensional Teichm\"uller space}{Cases with 3-dimensional Teichm\"uller space}}
There are three flat $3$-manifolds with $3$-dimensional Teichm\"uller space; namely those with Bieberbach groups $\mathcal G_6$, $\mathcal B_3$, and $\mathcal B_4$, see Subsection~\ref{subsec:flat3manifolds} (3).

\begin{example}[Case $\mathcal G_6$]\label{exa:Z2Z2}
The basis of the lattice $L_\pi$ of the Bieberbach group $\pi=\mathcal G_6$ consists of pairwise orthogonal vectors $v_1$, $v_2$ and $v_3$.
This group $\pi$ is generated by $L_\pi$ together with
\[(A,\tfrac12v_1),\quad \big(B,\tfrac12(v_1+v_2)\big),\quad \big(C,\tfrac12(v_1+v_2+v_3)\big),\]
where $A=\operatorname{diag}(1,-1,-1)$, $B=\operatorname{diag}(-1,1,-1)$, and $C=\operatorname{diag}(-1,-1,1)$ in the basis $\{v_1,v_2,v_3\}$. Thus, $H_\pi\cong\Z_2\oplus\Z_2$, and the three isotypic components are $W_i=\operatorname{span}\{v_i\}$, $i=1,2,3$.
\begin{enumerate}[{Case} (a)]
\item Collapsing $W_1$ yields a flat orbifold obtained as the quotient of $\R^2$ by the action of the group of isometries generated by $T_1(x,y)=(x+a,y)$, $T_2(x,y)=(x,y+b)$, $T_3(x,y)=(-x,-y)$, $T_4(x,y)=(x+\frac12a,-y+\frac12b)$ and $T_5(x,y)=(-x+\frac12a,y+\frac12b)$.
Here, $a=\vert v_2\vert>0$ and $b=\vert v_3\vert>0$. This orbifold can be identified with $\mathds RP^2(2,2;)$, for details see Appendix~\ref{sec:appflat2orbs} case $G=G_1$.
\item Collapsing $W_2$, we obtain the flat orbifold given by the quotient of $\R^2$ by the group of isometries generated by $T_1(x,y)=(x+a,y)$, $T_2(x,y)=(x,y+b)$, $T_3(x,y)=(x+\frac12a,-y)$, $T_4(x,y)=(-x,-y+\frac12b)$ and $T_5(x,y)=(-x+\frac12a,y+\frac12b)$. Here, $a=\vert v_1\vert>0$ and $b=\vert v_3\vert>0$. Clearly, $T_3^2=T_1$. As above, this can be identified with $\R P^2(2,2;)$.
\item Collapsing $W_3$, we obtain the flat orbifold given by the quotient of $\R^2$ by the group of isometries generated by $T_1(x,y)=(x+a,y)$, $T_2(x,y)=(x,y+b)$, $T_3(x,y)=(x+\frac12a,-y)$, $T_4(x,y)=(-x, y+\frac12b)$ and $T_5=(-x+\frac12a,-y+\frac12b)$. 
Here, $a=\vert v_1\vert>0$ and $b=\vert v_2\vert>0$. Clearly, $T_1=T_3^2$, $T_2=T_4^2$ and $T_4=T_5\circ T_3$.  As above, this can be identified with  $\R P^2(2,2;)$.
\end{enumerate}
\end{example}

\begin{example}[Case $\mathcal B_3$]\label{exa:Z2Z2no1}
The basis of the lattice $L_\pi$ of the Bieberbach group $\pi=\mathcal B_3$ consists of pairwise orthogonal vectors. The group $\pi$ is generated by $L_\pi$ together with $(A,\frac12v_1)$ and $(E,\frac12v_2)$, where $A=\operatorname{diag}(1,-1,-1)$ and $E=\operatorname{diag}(1,1,-1)$ in the basis $\{v_1,v_2,v_3\}$.
Thus, $H_\pi\cong\Z_2\times\Z_2$, and the three isotypic components are $W_i=\operatorname{span}\{v_i\}$, $i=1,2,3$.
\begin{enumerate}[{Case} (a)] 
\item Collapsing $W_1$ yields a flat orbifold obtained as the quotient of $\R^2$ by the action of the group of isometries generated by $T_1(x,y)=(x+a,y)$, $T_2(x,y)=(x,y+b)$, $T_3(x,y)=(-x,-y)$ and $T_4(x,y)=(x+\frac12a,-y)$. Here, $a=\vert v_2\vert>0$ and $b=\vert v_3\vert>0$. This flat orbifold is identified with $D^2(2,2;)$, for details see Appendix~\ref{sec:appflat2orbs} case $G=G_2$.
\item Collapsing $W_2$ yields a flat orbifold obtained as the quotient of $\R^2$ by the action of the group of isometries generated by $T_1(x,y)=(x+a,y)$, $T_2(x,y)=(x,y+b)$, $T_3(x,y)=(x,-y)$ and $T_4(x,y)=(x+\frac12a,-y)$. Here, $a=\vert v_1\vert>0$ and $b=\vert v_3\vert>0$. This quotient is identified with $S^1\times I$, for details see Appendix~\ref{sec:appflat2orbs} case $G=G_3$.
\item Collapsing $W_3$ yields a flat orbifold obtained as the quotient of $\R^2$ by the action of the group of isometries generated by $T_1(x,y)=(x+a,y)$, $T_2(x,y)=(x,y+b)$, $T_3(x,y)=(x+\frac12a,-y)$ and $T_4(x,y)=(x,y+\frac12b)$. Here, $a=\vert v_1\vert>0$ and $b=\vert v_2\vert>0$. Clearly, $T_1=T_3^2$ and $T_2=T_4^2$. This quotient is easily identified with a flat Klein bottle.
\end{enumerate}
\end{example}

\begin{example}[Case $\mathcal B_4$]\label{exa:Z2Z2no2}
The basis of the lattice $L_\pi$ of the Bieberbach group $\pi=\mathcal B_4$ also consists of pairwise orthogonal vectors. The group $\pi$ is generated by $L_\pi$ together with $(A,\frac12v_1)$ and $(E,\frac12(v_2+v_3))$, where
$A=\operatorname{diag}(1,-1,-1)$ and $E=\operatorname{diag}(1,1,-1)$ in the basis $\{v_1,v_2,v_3\}$. Thus, $H_\pi\cong\Z_2\times\Z_2$, and the three isotypic components are $W_i=\operatorname{span}\{v_i\}$, $i=1,2,3$.
\begin{enumerate}[{Case} (a)]
\item Collapsing $W_1$ yields a flat orbifold obtained as the quotient of $\R^2$ by the action of the group of isometries generated by $T_1(x,y)=(x+a,y)$, $T_2(x,y)=(x,y+b)$, $T_3(x,y)=(-x,-y)$ and $T_4(x,y)=(x+\frac12a,y+\frac12b)$. 
Here, $a=\vert v_2\vert>0$ and $b=\vert v_3\vert>0$.
 This flat orbifold is identified with $S^2(2,2,2,2;)$, for details see Appendix~\ref{sec:appflat2orbs} case $G=G_4$.
\item Collapsing $W_2$ yields a flat orbifold obtained as the quotient of $\R^2$ by the action of the group of isometries generated by $T_1(x,y)=(x+a,y)$, $T_2(x,y)=(x,y+b)$, $T_3(x,y)=(x,-y+\tfrac12b)$ and $T_4(x,y)=(x+\frac12a,-y)$. 
Here, $a=\vert v_1\vert>0$ and $b=\vert v_3\vert>0$. This flat orbifold is identified with a M\"obius band, see Appendix~\ref{sec:appflat2orbs} case $G=G_5$.
\item Collapsing $W_3$ yields a flat orbifold obtained as the quotient of $\R^2$ by the action of the group of isometries generated by
$T_1(x,y)=(x+a,y)$, $T_2(x,y)=(x,y+b)$, $T_3(x,y)=(x,y+\frac12b)$ and $T_4(x,y)=(x+\frac12a,-y)$. 
Here, $a=\vert v_1\vert>0$ and $b=\vert v_2\vert>0$. Clearly, $T_4^2=T_1$ and $T_3^2=T_2$. This flat orbifold is easily identified with a Klein bottle.
\end{enumerate}
\end{example}

\subsection{\texorpdfstring{Cases with $4$-dimensional Teichm\"uller space}{Cases with 4-dimensional Teichm\"uller space}}
There are three flat $3$-manifolds with $4$-dimensional Teichm\"uller space; namely those with Bieberbach groups $\mathcal B_1$, $\mathcal B_2$, and $\mathcal G_2$, see Subsection~\ref{subsec:flat3manifolds} (4).

\begin{example}[Case $\mathcal B_1$]\label{exa:k2s1}
The manifold in question is the product $K^2\times S^1$, which by the discussion in the beginning of this section can collapse to $K^2$, $S^1\times I$, $I$, $S^1$, and a point.
\end{example}

\begin{example}[Case $\mathcal B_2$]\label{exa:Z2no}
The basis of the lattice $L_\pi$ of the Bieberbach group $\pi=\mathcal B_2$ is such that $v_1$ and $v_2$ generate any planar lattice, while $v_3$ is a vector whose orthogonal projection on the plane spanned by $v_1$ and $v_2$ is $\frac12(v_1+v_2)$.
This group $\pi$ is generated by $L_\pi$ and $(E,\frac12v_1)$, where $E$ is the identity on the plane spanned by $v_1$ and $v_2$, and $E(v_3)=v_1+v_2-v_3$, i.e., $E=\operatorname{diag}(1,1,-1)$ in the basis $\{v_1,v_2,w\}$, where $w$ is orthogonal to $v_1$ and $v_2$.
Thus, $H_\pi\cong\Z_2$ and it has one trivial isotypic component $W_1=\operatorname{span}\{v_1,v_2\}$ and one nontrivial isotypic component $W_2=\operatorname{span}\{w\}$ isomorphic to the sign representation of $\Z_2$.

Collapsing $W_2$, the limit is clearly $T^2$. Collapsing $W_1$, the limit is an interval $I$. Collapsing a one-dimensional subspace of $W_1$ produces as limit either a M\"obius band, a Klein bottle, or an interval, depending on the slope of the subspace. More precisely, collapsing the direction of $v_1$, the limiting orbifold is the quotient of $\R^2$ by the group of isometries generated by $T_1(x,y)=(x+a,y)$, $T_2(x,y)=\big(x+\frac12a,y+b\big)$, and $T_3(x,y)=(x,-y)$. Here, $a=\vert v_2\vert>0$ and $b=\vert v_3\vert>0$. This orbifold can be identified with a M\"obius band.
On the other hand, collapsing the direction of $v_2$, the limiting orbifold is the quotient of $\R^2$ by the group of isometries generated by $T_1(x,y)=(x+a,y)$, $T_2(x,y)=\big(x+\frac12a,y+b\big)$, and $T_3(x,y)=(x+\frac12a,-y)$. Here, $a=\vert v_1\vert>0$ and $b=\vert v_3\vert>0$. Clearly, $T_3^2=T_1$, so that $T_1$ can be omitted from the list of generators. This quotient can be identified with a Klein bottle. Collapsing (generic) directions with irrational slope with respect to $v_1$ and $v_2$ has the same effect as collapsing all of $W_1$, which results in an interval as the limit.
\end{example}

\begin{example}[Case $\mathcal G_2$]\label{exa:Z2}
The basis of the lattice $L_\pi$ of the Bieberbach group $\pi=\mathcal G_2$ is such that $v_1$ is orthogonal to $v_2$ and $v_3$.
This group $\pi$ is generated by $L_\pi$ and $(A,\frac12v_1)$, where $A(v_1)=v_1$, and $A(v)=-v$ for all $v$ in the span of $v_2$ and $v_3$.
Thus, $H_\pi\cong\Z_2$ and it has one trivial isotypic component $W_1=\operatorname{span}\{v_1\}$ and one nontrivial isotypic component $W_2=\operatorname{span}\{v_2,v_3\}$, isomorphic to the direct sum of two copies of the sign representation of $\Z_2$. 
Collapsing $W_1$, the limit is the quotient of a $\R^2$ by the group of isometries generated by $T_1$, $T_2$, $T_3$ and $T_4=-\mathrm{Id}$, with $T_1(x,y)=(x+a,y)$, $T_2(x,y)=(x,y+b)$, and $T_3(x,y)=(x+\frac a2,y)$. Here, $a=\vert v_1\vert$ and $b=\vert v\vert$. Clearly, $T_1=T_3^2$. A fundamental domain for this quotient is given by the rectangular triangle with vertices $(0,0)$, $(a,0)$ and $(0,b)$. This limit can be identified with the flat $2$-orbifold $D^2(2,2;)$.
Collapsing $W_2$, the limit is clearly $S^1$. Collapsing a one-dimensional subspace of $W_2$ produces as limit either a flat Klein bottle or a point, depending on the slope of the subspace.
\end{example}

\appendix
\section{\texorpdfstring{Flat $2$-orbifolds}{Flat two-dimensional orbifolds}}\label{sec:appflat2orbs}
We give here some details on how to recognize the flat $2$-orbifolds resulting from collapse of flat $3$-manifolds discussed in Section~\ref{sec:examples}.
In all the examples below, $a$ and $b$ are positive constants. In all figures, singularities (which are always interior points) are marked in red. Red lines represent the boundary. Arrows indicate identifications of the corresponding sides.

\subsection{Groups of isometries}
In Example~\ref{exa:Z2Z2}, case (a), we consider the group of isometries generated by:
\begin{equation}\label{eq:exa7}
\begin{aligned}
&T_1(x,y)=(x+a,y),& &T_2(x,y)=(x,y+b), \\
&T_3(x,y)=(-x,-y),& &T_4(x,y)=\big(x+\tfrac12a,-y+\tfrac12b\big),\\
&T_5(x,y)=\big(-x+\tfrac12a,y+\tfrac12b\big).&
\end{aligned}
\end{equation} 
Clearly, $T_5=T_4\circ T_3$, so this generator is redundant, and it can be removed from the list. 
Denote by $G_1$ the group of isometries of $\R^2$ generated by $T_1$, $T_2$, $T_3$ and $T_4$ in \eqref{eq:exa7}. It is not hard to check that cases (b) and (c) of Example~\ref{exa:Z2Z2} are essentially equal to case (a) by a change of coordinates.

\begin{figure}[ht]
\includegraphics[scale=.3]{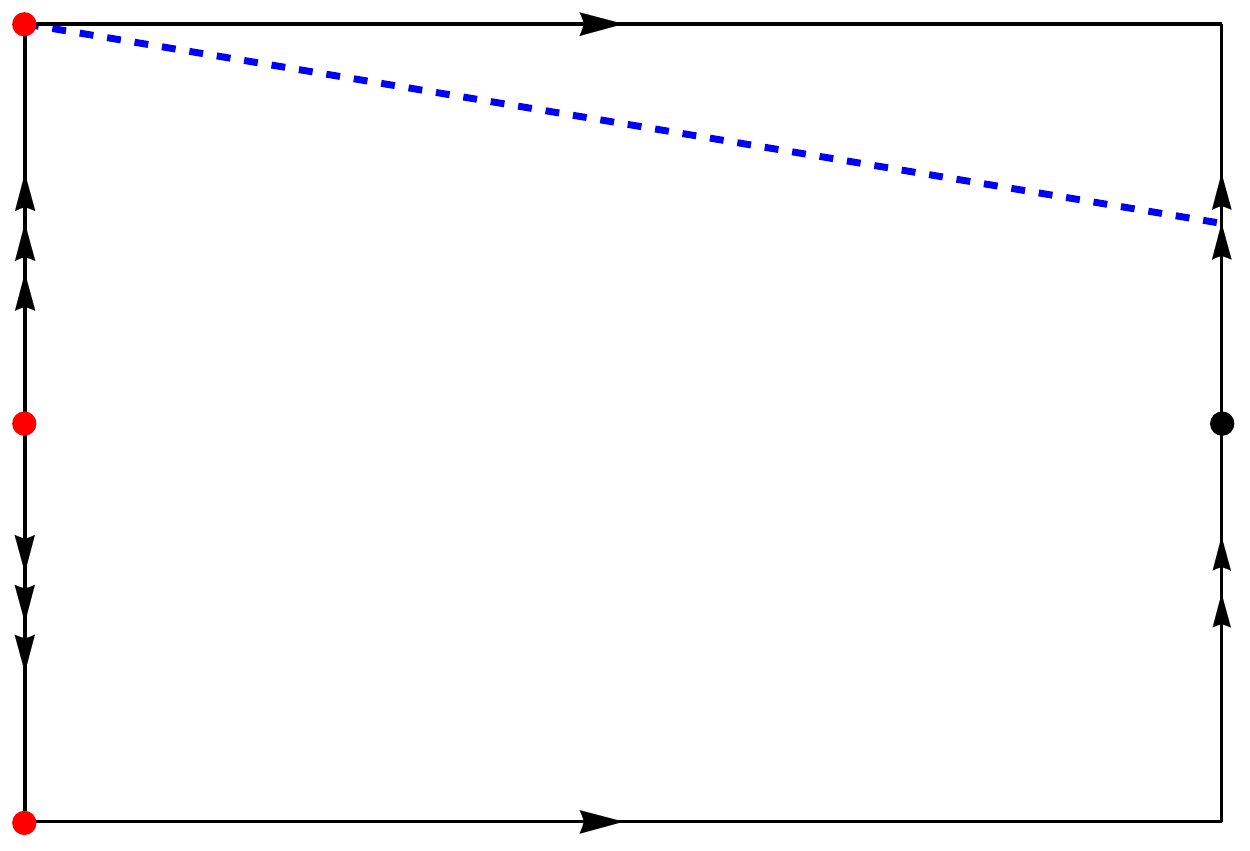}
\caption{This picture represents the rectangle $[0,a/4]\times[-b/2,b/2]$, which is a fundamental domain for the action of $G_1$. The arrows give the identifications of the sides that produce the quotient $\R^2/G_1$, as described in Proposition~\ref{quott}.}
\label{fig:caseG1-1}
\includegraphics[scale=.3]{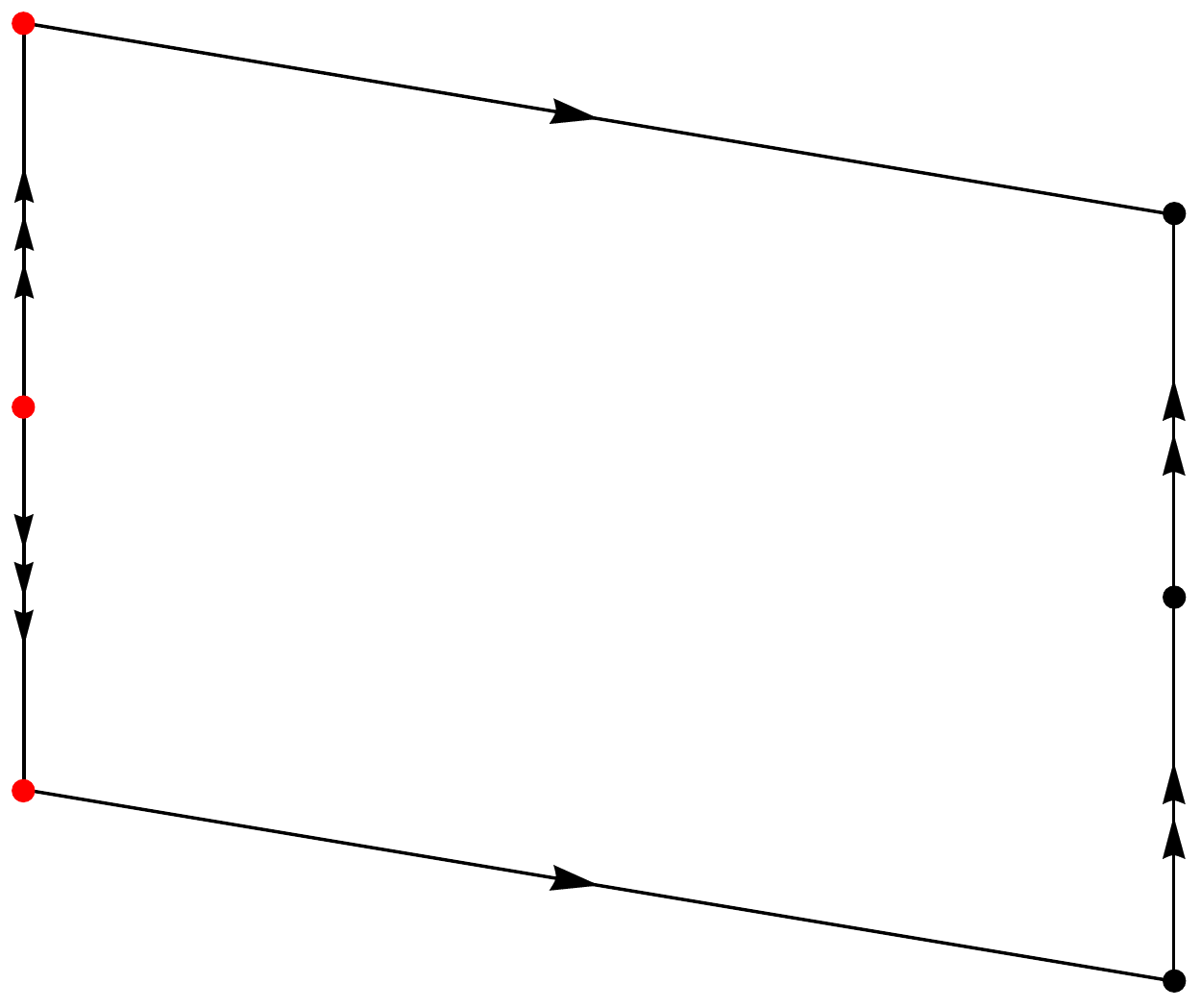}\qquad\qquad
\includegraphics[scale=.25]{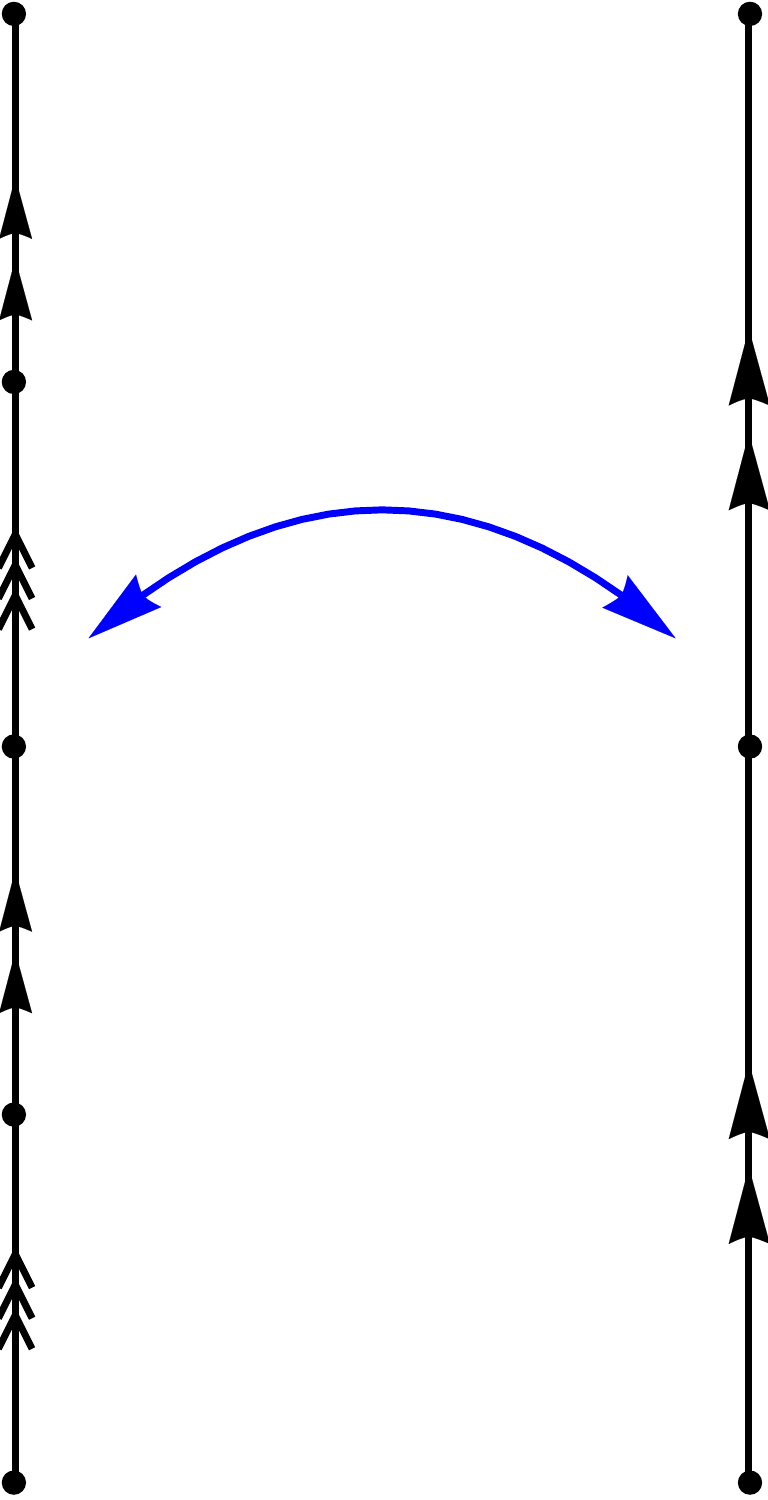}
\caption{From Figure~\ref{fig:caseG1-1}, translate the triangle in the upper right corner downward by $(0,-b)$. By relabeling the identifications according to the figure on the right, the quotient $\R^2/G_1$ can be identified with $\R P^2(2,2;)$.}
\label{fig:caseG1-2}
\end{figure}

In Example~\ref{exa:Z2Z2no1}, case (a), we consider the group of isometries generated by:
\begin{equation}\label{eq:exa1}
\begin{aligned}
&T_1(x,y)=(x+a,y),& &T_2(x,y)=(x,y+b),\\
&T_3(x,y)=(-x,-y),& &T_4(x,y)=\big(x+\tfrac a2,-y\big).
\end{aligned}
\end{equation}
Denote by $G_2$ the group of isometries of $\R^2$ generated by $T_1$, $T_2$, $T_3$ and $T_4$ in \eqref{eq:exa1}.

\begin{figure}[ht]
\includegraphics[scale=.3]{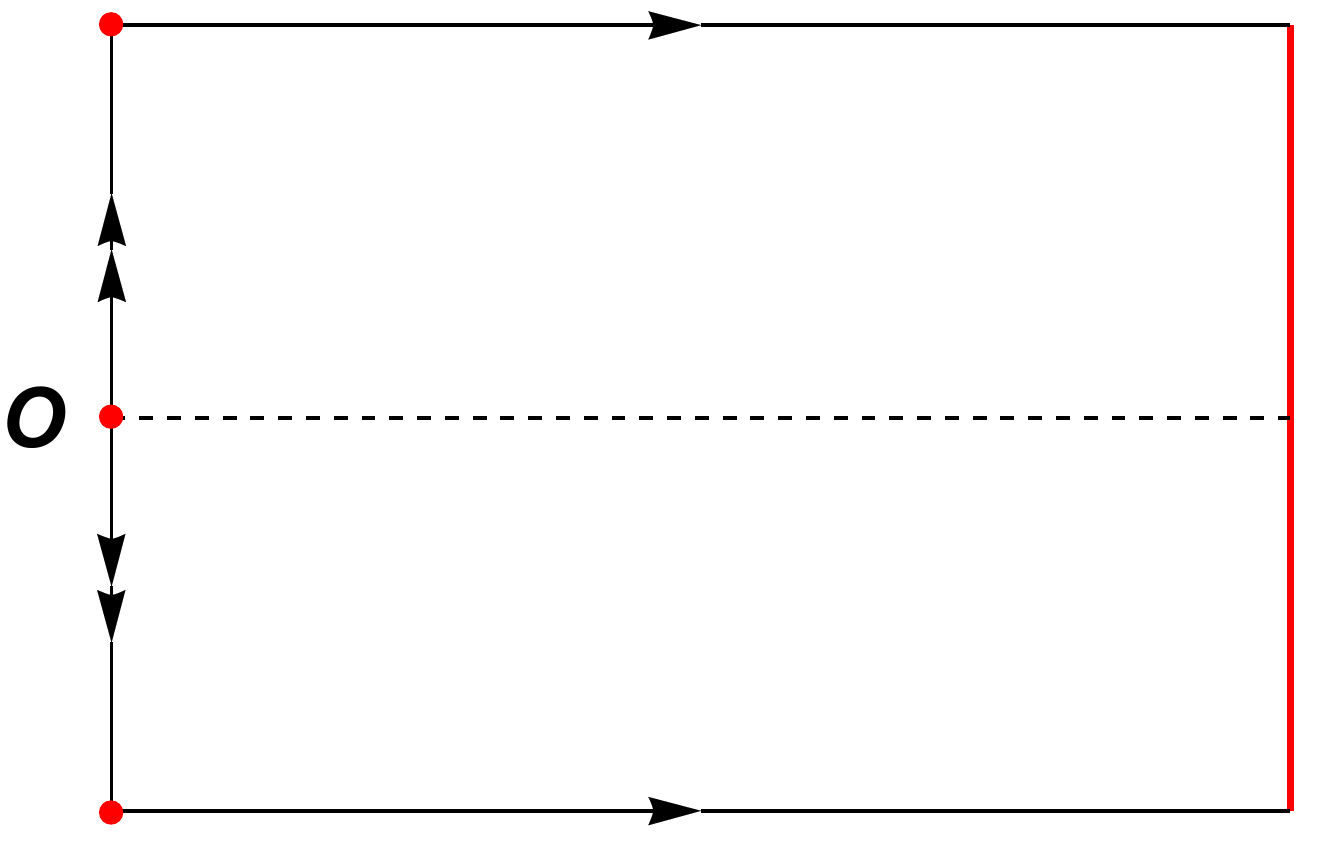}
\caption{The rectangle $[0,a/4]\times[-b/2,b/2]$ is a fundamental domain for the action of~$G_2$. The identifications of the sides are described in Proposition~\ref{quott}.}
\label{fig:caseG2-1}
\includegraphics[scale=.3]{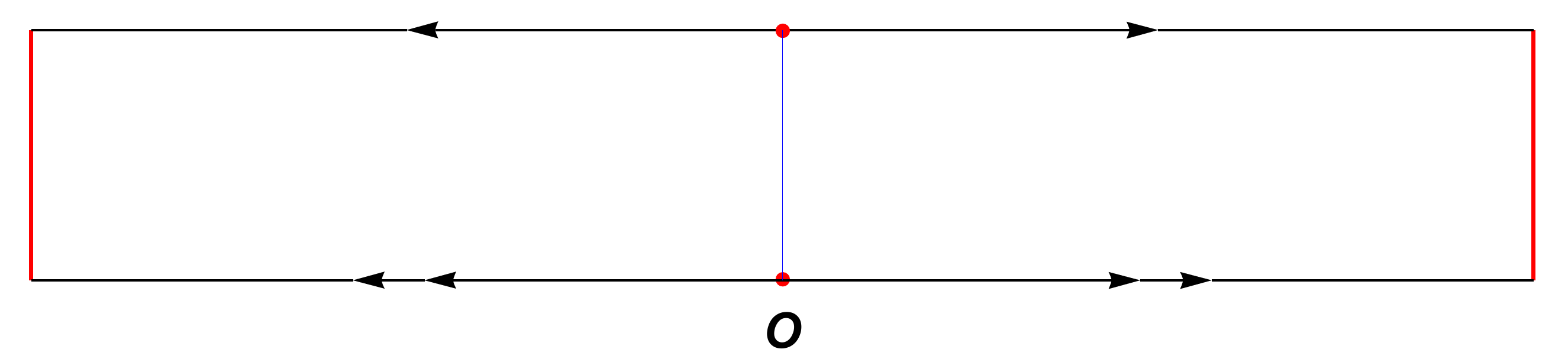}
\caption{The result of rotating clockwise the lower half of the rectangle by the angle $\pi$ around the origin $O$, in Figure~\ref{fig:caseG2-1}.}
\label{fig:caseG2-2}
\includegraphics[scale=.3]{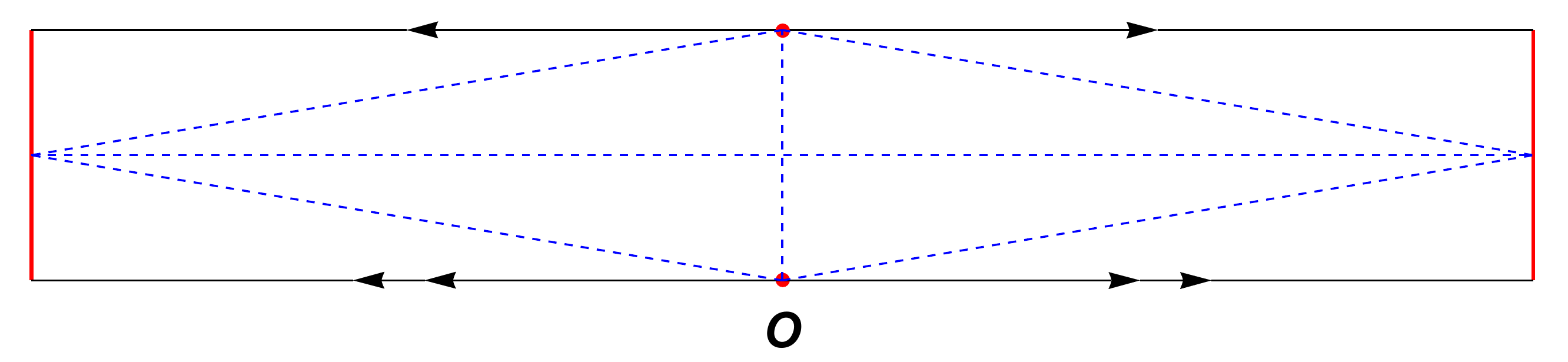}
\caption{Triangulation of the rectangle in Figure~\ref{fig:caseG2-2}. The quotient $\R^2/G_2$ is identified with $D^2(2,2;)$.}
\label{fig:caseG2-3}
\end{figure}

\begin{figure}[ht]
\includegraphics[scale=.3]{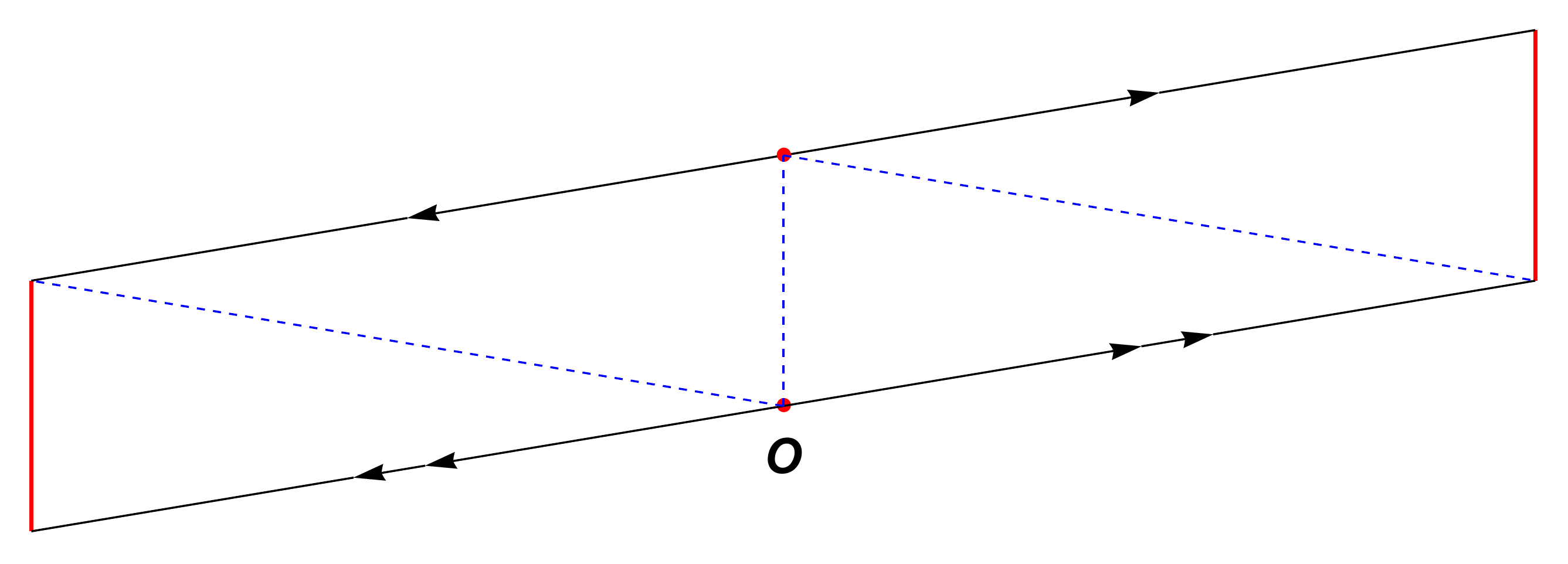}
\caption{This figure also represents the quotient $\R^2/G_2$. It is obtained from Figure~\ref{fig:caseG2-3} by:
(a) rotating the triangle in the lower right corner clockwise by the angle $\pi$ around $O$;
(b) rotating the triangle in the upper left corner clockwise by the angle $\pi$ around the other red point;
(c) re-triangulating into $4$ congruent isosceles triangles.}
\label{fig:caseG2-4}
\end{figure}

In Example~\ref{exa:Z2Z2no1}, case (b), we consider the group of isometries generated\footnote{%
To simplify notation, here we are switching the names of $x$ and $y$, and of $a$ and $b$.} by:
\begin{equation}\label{eq:exa2}
\begin{aligned}
&T_1(x,y)=(x+a,y),& &T_2(x,y)=(x,y+b),\\
&T_3(x,y)=(-x,y),& &T_4(x,y)=\big(-x,y+\tfrac b2\big).
\end{aligned}
\end{equation}
Denote by $G_3$ the group of isometries of $\R^2$ generated by $T_1$, $T_2$, $T_3$ and $T_4$ in \eqref{eq:exa2}.

\begin{figure}[ht]
\includegraphics[scale=.25]{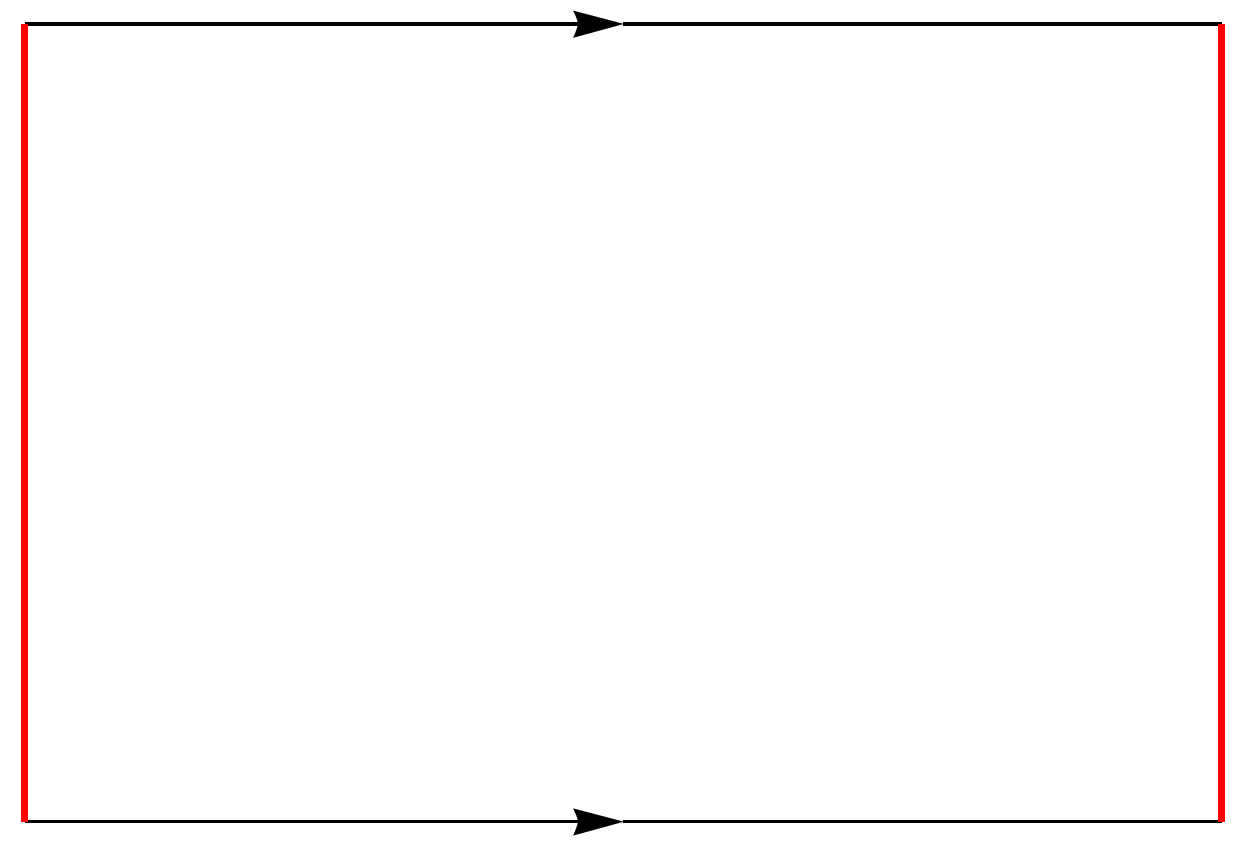}\qquad
\includegraphics[scale=.25]{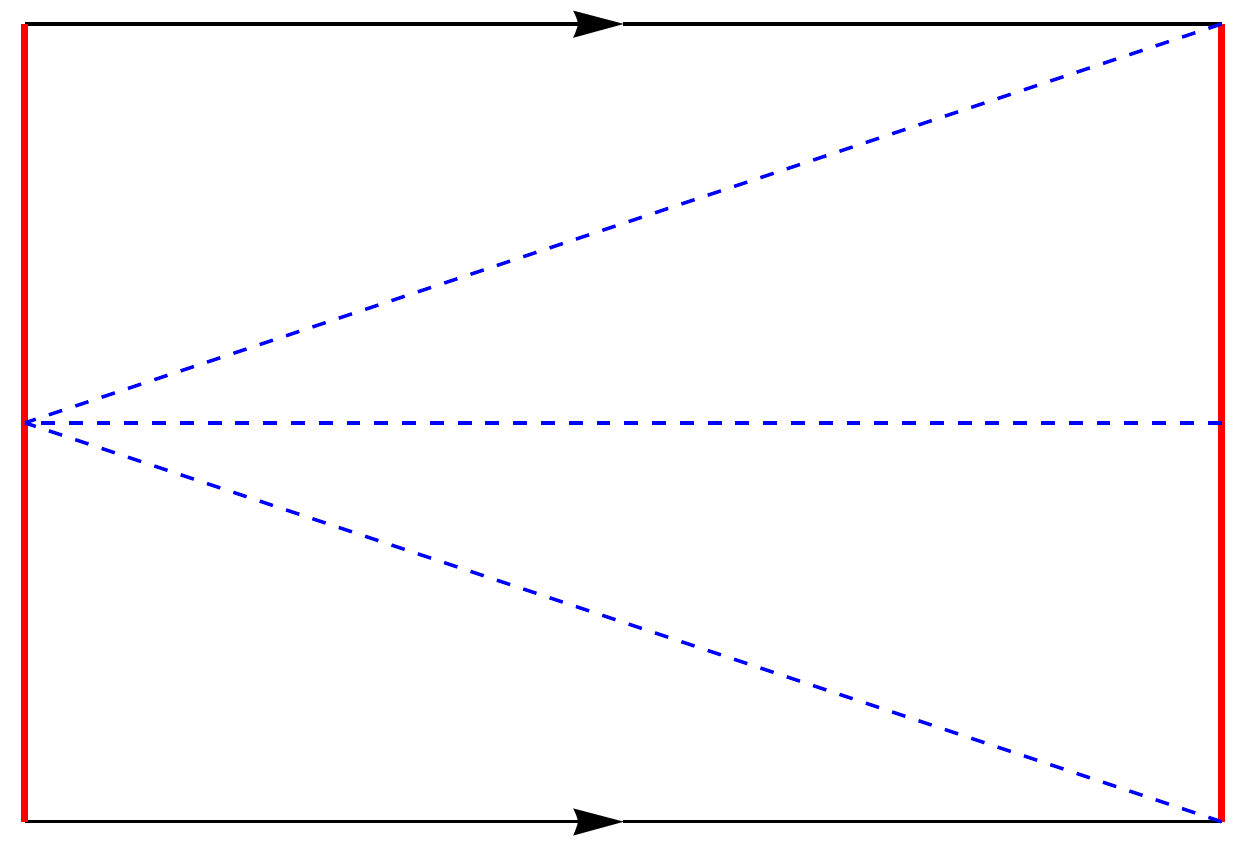}
\caption{The rectangle $[a/2,b/2]$ is a fundamental domain for the action of $G_3$ on $\R^2$. On the right, a triangulation of the quotient space $\R^2/G_3$, which is identified with $S^1\times I$.}
\label{fig:caseG3-1}
\end{figure}

In Example~\ref{exa:Z2Z2no2}, case (a), we consider the group of isometries generated by:
\begin{equation}\label{eq:exa4}
\begin{aligned}
&T_1(x,y)=(x+a,y),& &T_2(x,y)=(x,y+b),\\
&T_3(x,y)=(-x,-y),& &T_4(x,y)=\big(x+\tfrac a2,y+\tfrac b2\big).
\end{aligned}
\end{equation}
Denote by $G_4$ the group of isometries of $\R^2$ generated by $T_1$, $T_2$, $T_3$ and $T_4$ in \eqref{eq:exa4}.

\begin{figure}
\includegraphics[scale=.3]{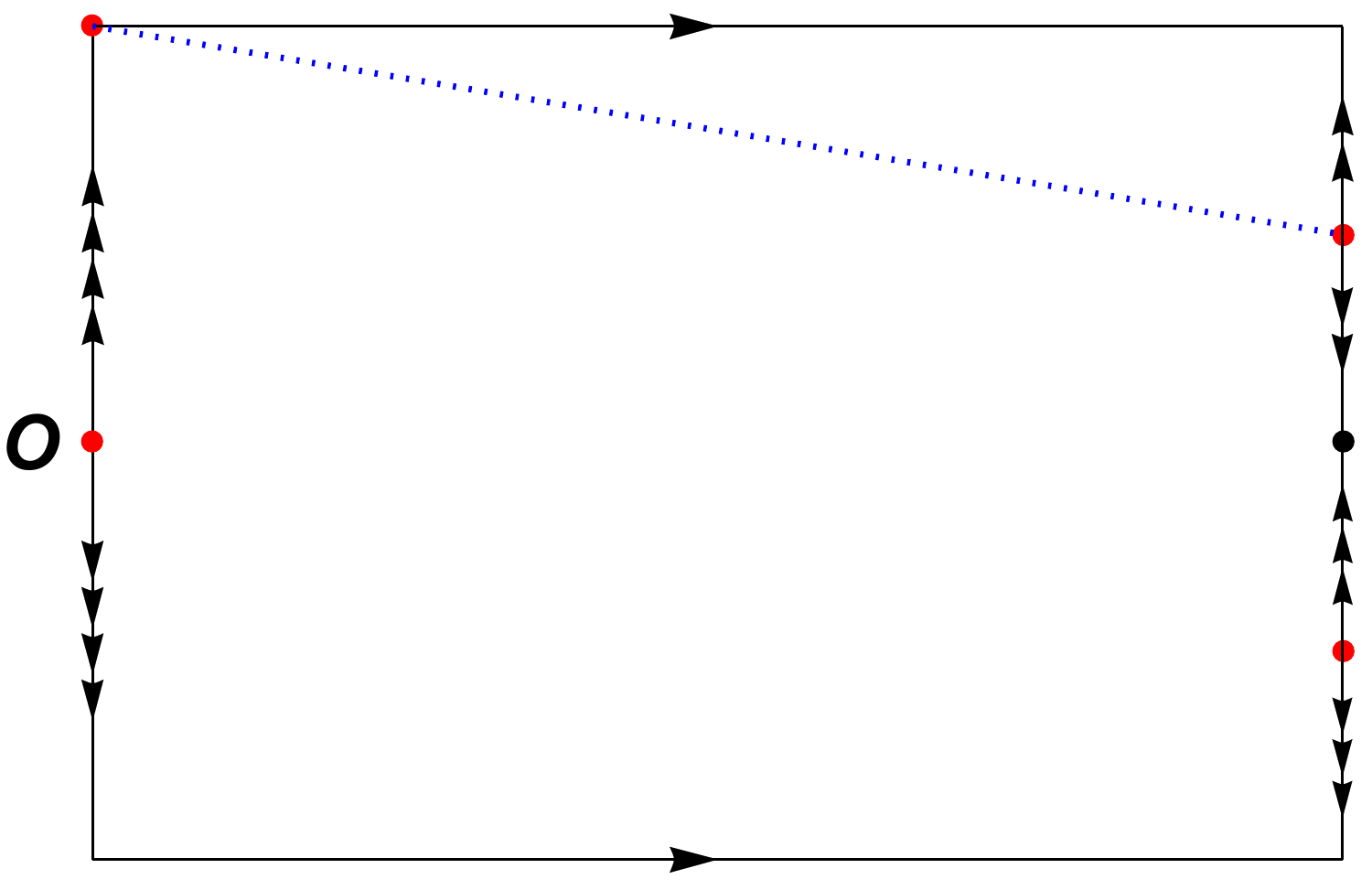}
\caption{The rectangle $[0,a/4]\times[-b/2,b/2]$ is a fundamental domain for the action of $G_4$.}
\label{fig:caseG4-1}
\includegraphics[scale=.3]{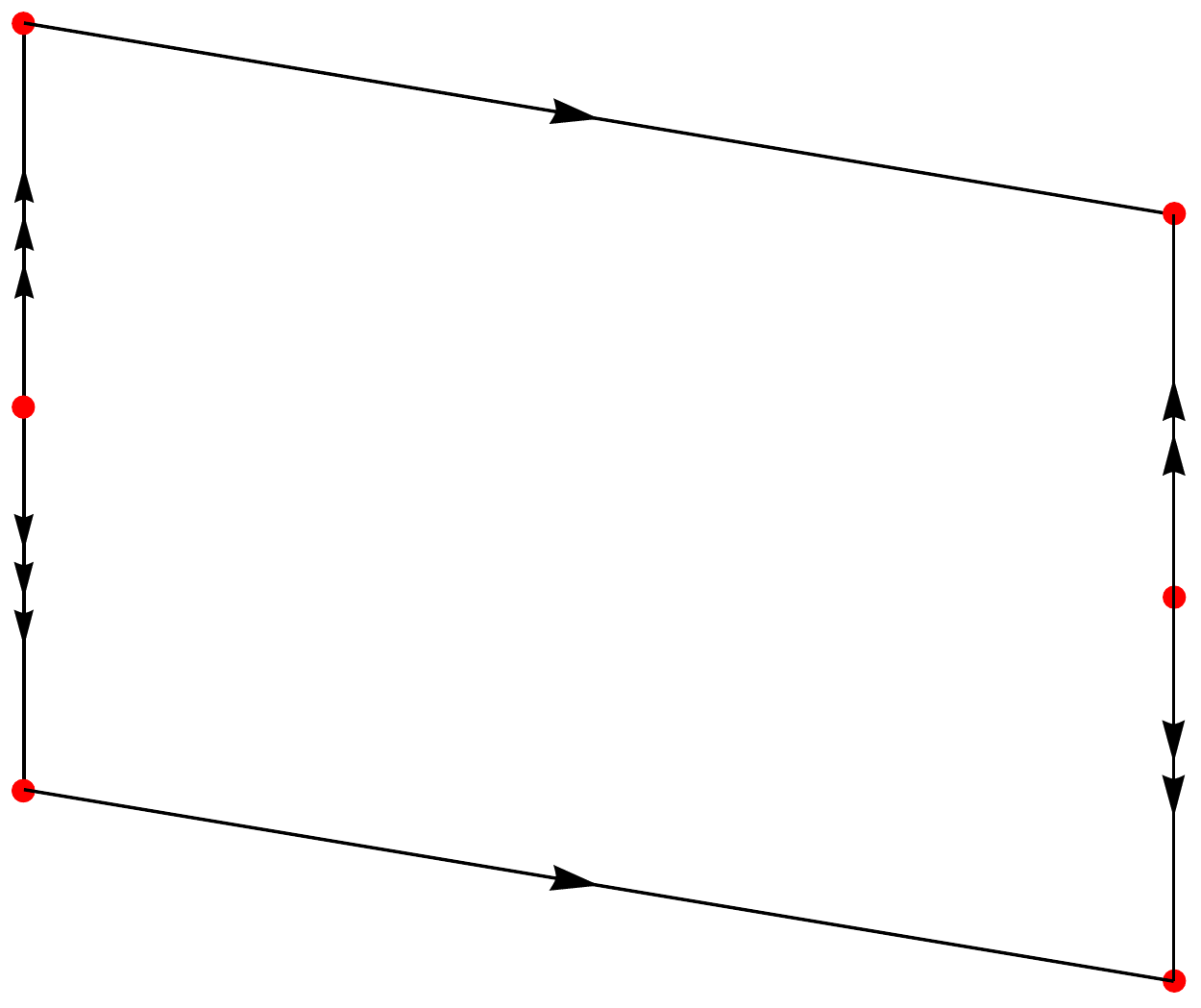}\qquad\qquad
\includegraphics[scale=.3]{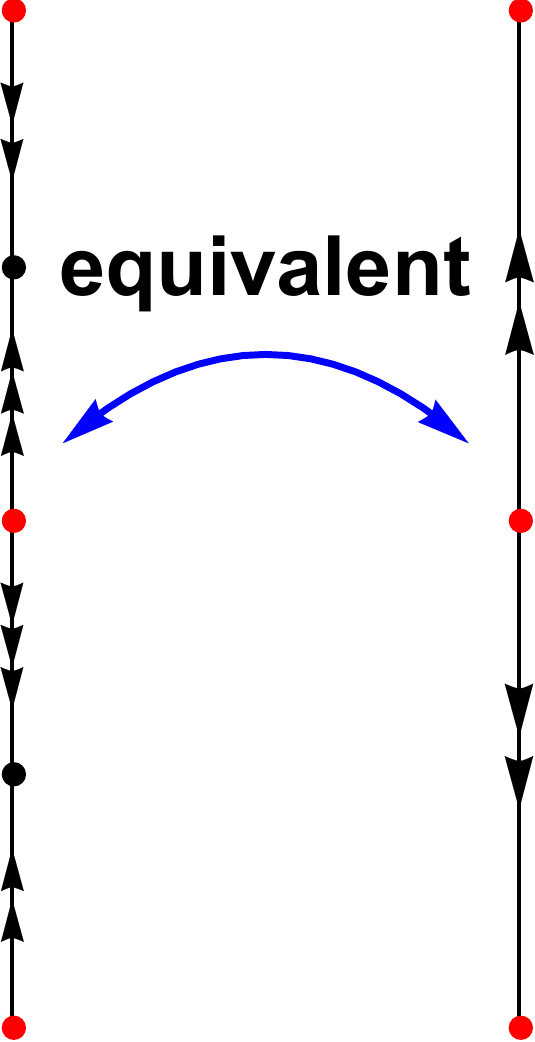}
\caption{Translate the triangle in the upper right corner downward by $(0,-b)$. By relabeling the identifications according to the figure on the right, the quotient $\R^2/G_4$ can be easily identified with $S^2(2,2,2,2;)$.}\label{fig:caseG4-3}
\end{figure}

In Example~\ref{exa:Z2Z2no2}, case (b), we consider the group of isometries generated by
$T_1(x,y)=(x+a,y)$, $T_2(x,y)=(x,y+b)$, $T_3(x,y)=(x,-y+\tfrac12b)$ and $T_4(x,y)=(x+\frac12a,-y)$. 
Using new coordinates $(x,y-\tfrac14b)$ and interchanging the names of $x$ and $y$, and $a$ and $b$, it is easily seen that the quotient is isometric to the quotient of $\R^2$ by the group of isometries generated by:
\begin{equation}\label{eq:exa5}
\begin{aligned}
&T_1(x,y)=(x+a,y),& &T_2(x,y)=(x,y+b),\\
&T_3(x,y)=(-x,y),& &T_4(x,y)=\big(-x+\tfrac a2,y+\tfrac b2\big).
\end{aligned}
\end{equation}
Denote by $G_5$ the group of isometries of $\R^2$ generated by $T_1$, $T_2$, $T_3$ and $T_4$ in \eqref{eq:exa5}.

\begin{figure}
\includegraphics[scale=.3]{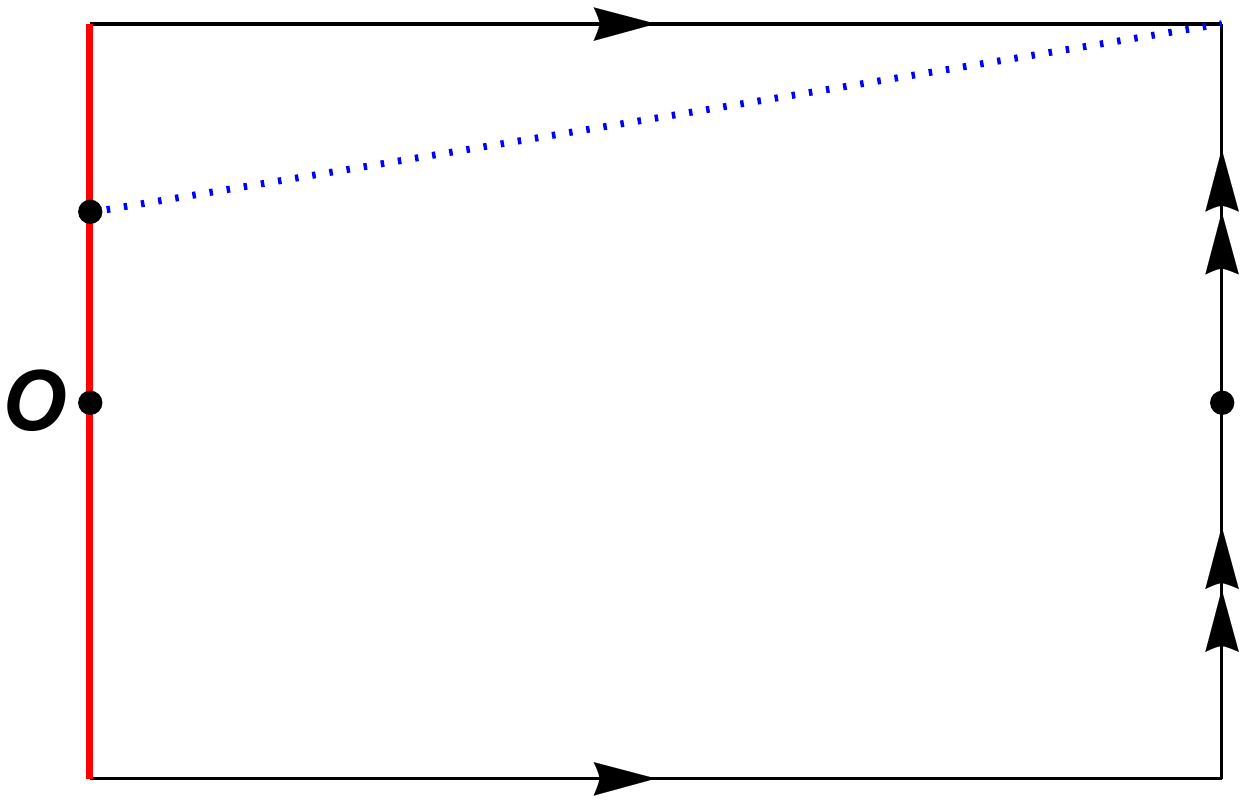}
\caption{The rectangle $[0,a/4]\times[-b/2,b/2]$ is a fundamental domain for the action of $G_5$.}
\label{fig:caseG5-1}
\includegraphics[scale=.3]{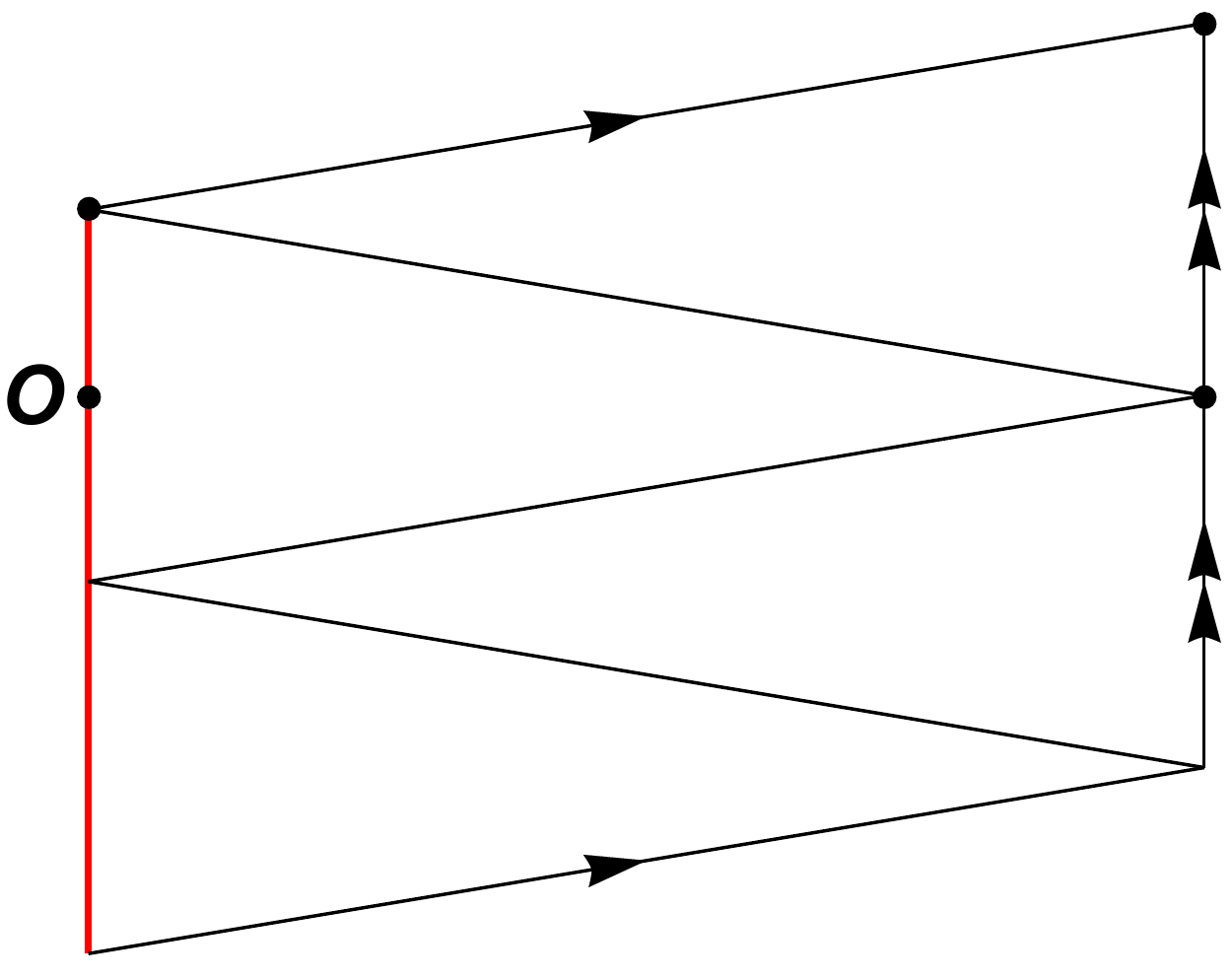}\qquad\qquad
\caption{From Figure~\ref{fig:caseG5-1}, translate the triangle in the upper left corner downward by $(0,-b)$. The quotient $\R^2/G_5$ can be identified with a flat M\"obius band without  singularities.}
\label{fig:caseG5-2}
\end{figure}

In Example~\ref{exa:Z2no}, case (a), we consider the group of isometries generated by $(x,y)\mapsto (x+a,y)$, $(x,y)\mapsto\big(x+\frac12a,y+b\big)$, and $(x,y)\mapsto(x,-y)$.
Switching the name of the variables (we let the new symbols $x$, $y$, $a$ and $b$ stand for the old symbols $y$, $x$, $2b$ and $a$), a set of generators for this group is given by:
\begin{equation}\label{eq:exa10}
T_2(x,y)=(x,y+b),\quad T_3(x,y)=(-x,y),\quad T_4(x,y)=\big(x+\tfrac a2,y+\tfrac b2\big).
\end{equation}
Denote by $G_6$ the group of isometries of $\R^2$ generated by $T_2$, $T_3$ and $T_4$ in \eqref{eq:exa10}.
\smallskip

\begin{figure}
\includegraphics[scale=.275]{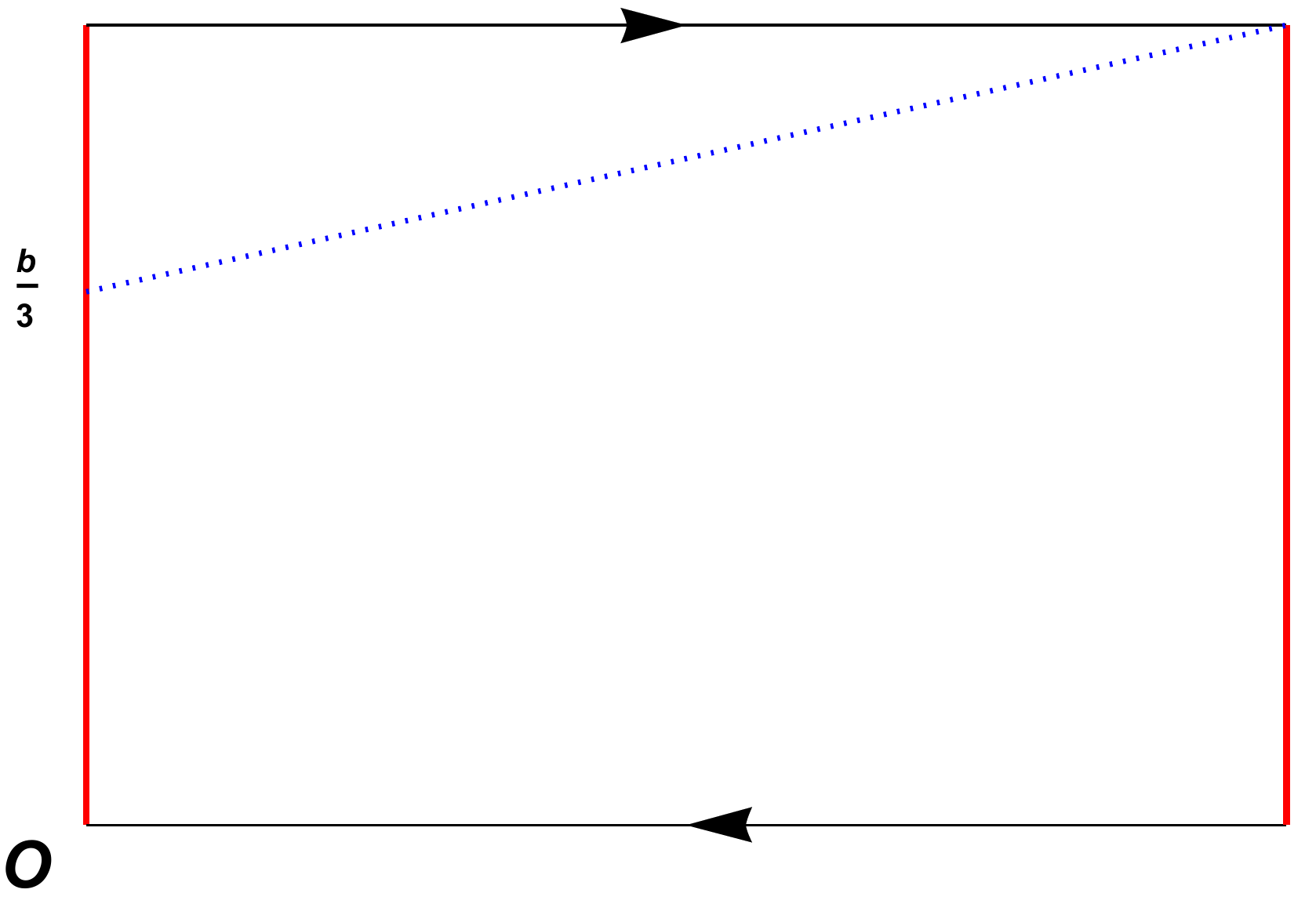}
\caption{The rectangle $[0,a/2]\times [0,b/2]$ is a fundamental domain for the action of $G_6$ on $\R^2$. The identifications of the sides are described in Proposition~\ref{quott}.}
\label{fig:caseG6-1}
\includegraphics[scale=.25]{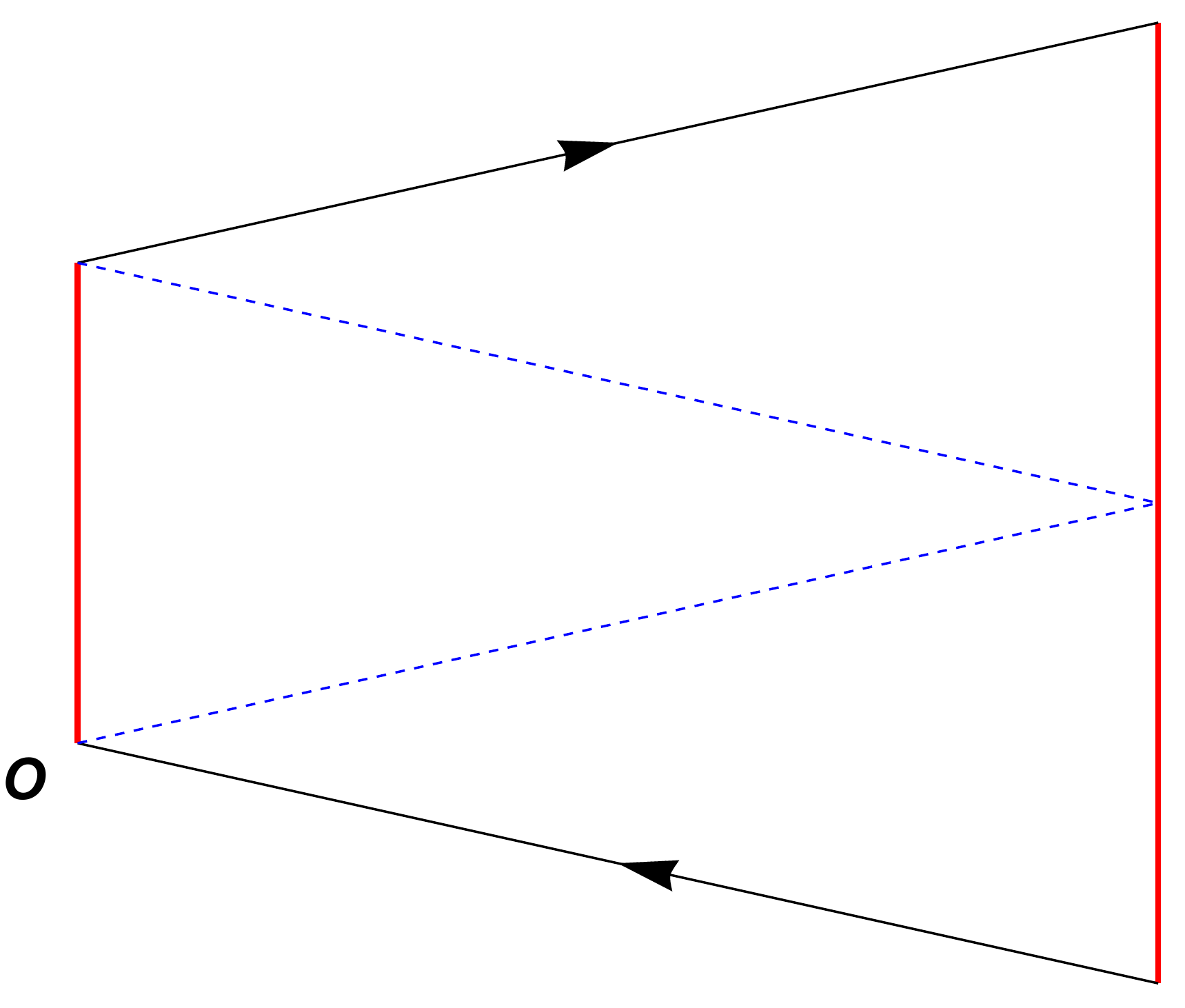}
\caption{In Figure~\ref{fig:caseG6-1}, translate the triangle in the upper left corner downward by $(0,-b/2)$, and then reflect the resulting image triangle about the vertical line through the center of the rectangle. This quotient space is a flat M\"obius band without singularities.}
\label{fig:caseG6-2}
\end{figure}

In Example~\ref{exa:Z2no}, case (b), we consider the group of isometries generated by $(x,y)\mapsto(x+a,y)$, $(x,y)\mapsto\big(x+\frac12a,y+b\big)$, and $(x,y)\mapsto(x+\frac12a,-y)$.
Changing the name of the variables (we let the new symbols $x$, $y$, $a$ and $b$ stand for the old symbols $y$, $x$, $2b$ and $a$), a set of generators for this group is given by:
\begin{equation}\label{eq:exa11}
T_2(x,y)=(x,y+b),\quad T_3(x,y)=\big(\!-x,y+\tfrac b2\big),\quad T_4(x,y)=\big(x+\tfrac a2,y+\tfrac b2\big).
\end{equation}
Denote by $G_7$ the group of isometries of $\R^2$ generated by  $T_2$, $T_3$ and $T_4$ in \eqref{eq:exa11}.

\begin{figure}
\includegraphics[scale=.3]{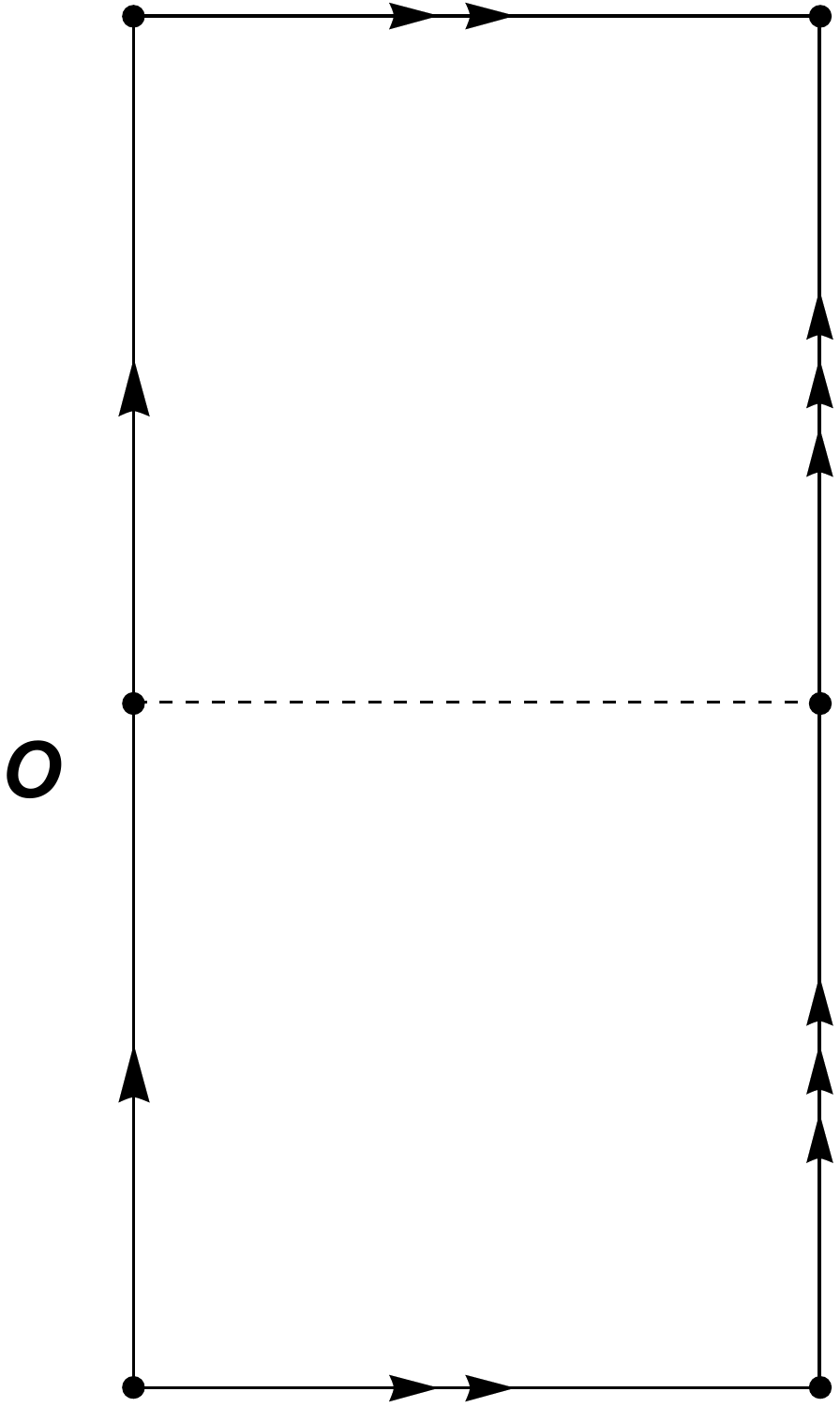}
\caption{The rectangle $[0,a/4]\times[-b/2,b/2]$ is a fundamental domain for the action of $G_7$.}
\label{fig:caseG7-1}
\includegraphics[scale=.3]{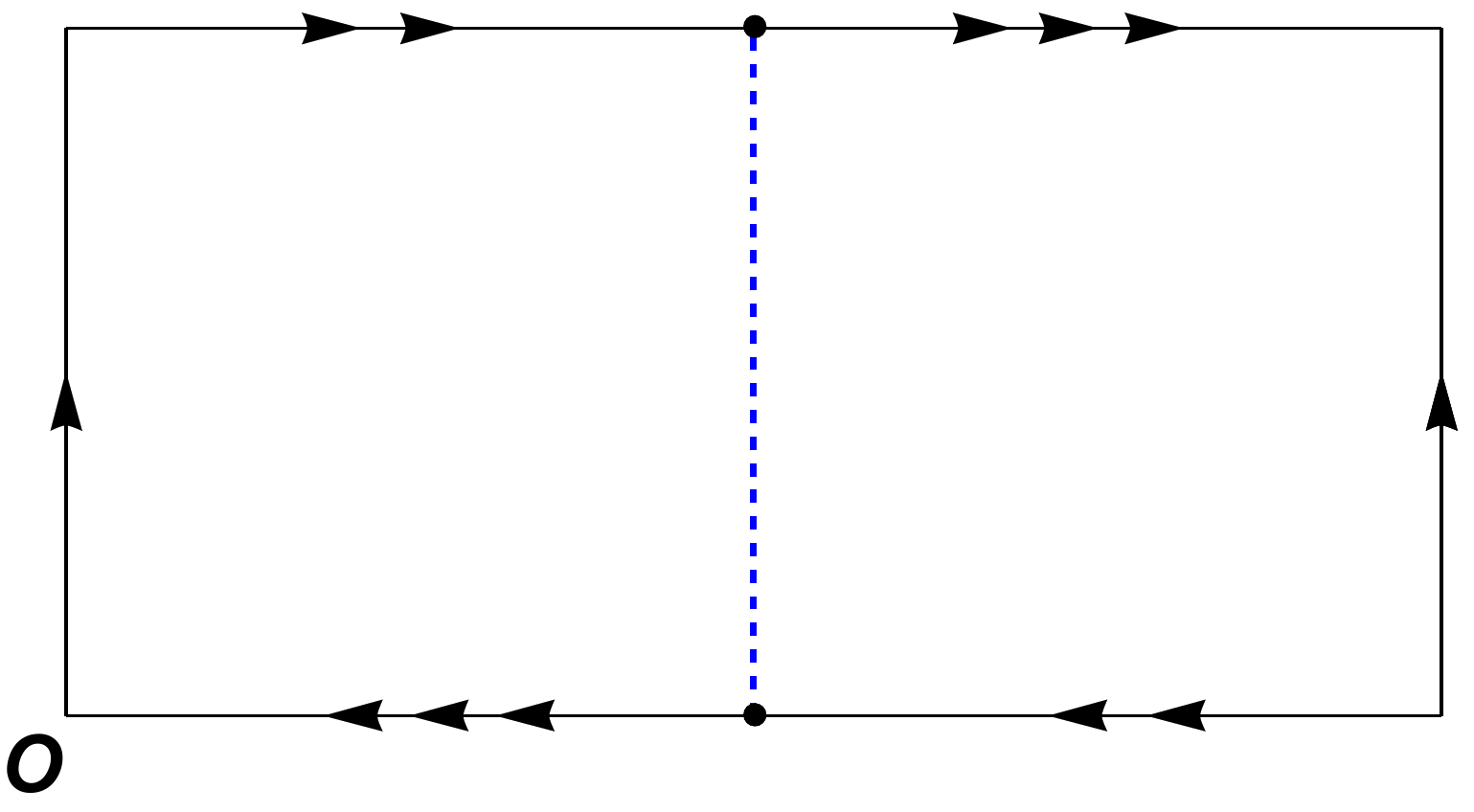}
\caption{In Figure ~\ref{fig:caseG7-1}, reflect the lower half of the rectangle about its right vertical side, and then translate the resulting image half-rectangle upward by $(0,b/2)$.}
\label{fig:caseG7-2}
\includegraphics[scale=.3]{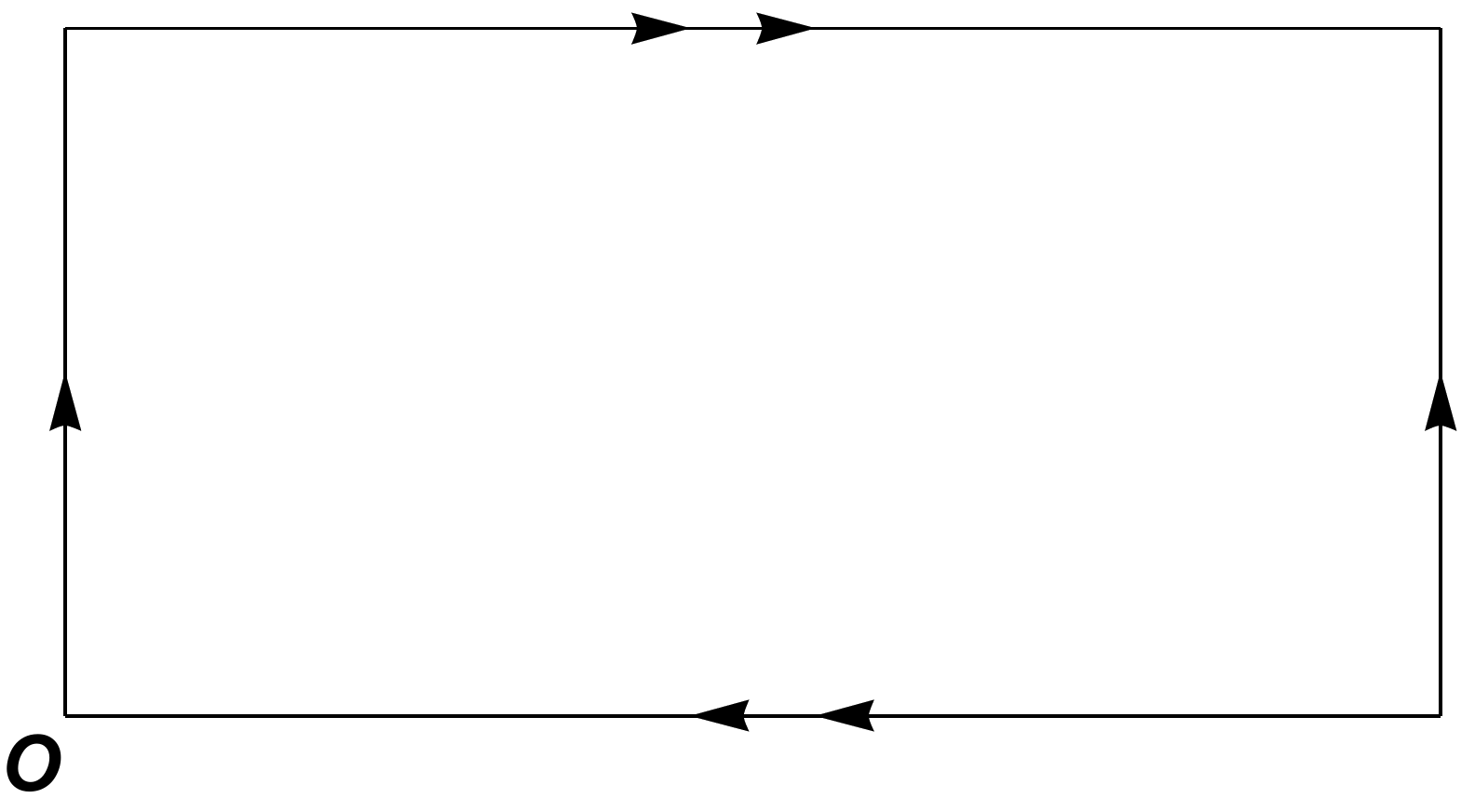}
\caption{Figure~\ref{fig:caseG7-2} depicts the standard flat Klein bottle.}
\label{fig:caseG7-3}
\end{figure}

We refer to all examples above collectively letting $G$ denote the discrete group of isometries of $\R^2$.

\begin{remark}\label{lpart}
In each example, the image $H$ of $G$ under the lin\-e\-ar-part homo\-mor\-phism is contained in the matrix 
group $\,\{\mathrm{diag}\hs(\delta,\nh\ve):\delta,\ve\in\{1,\nnh-\nnh1\}\}\cong\Z_2\w\oplus\mathds Z_2\w$. For $G=G_3$, $G=G_5$, $G=G_6$ or $G=G_7$,
$\,H=\{\mathrm{diag}\hs(1,\nnh1),\mathrm{diag}\hs(1,\nnh-\nnh1)\}$. On the 
other hand, 
$\,H=\{\mathrm{diag}\hs(1,\nnh1),\mathrm{diag}\hs(\nnh-\nnh1,\nnh-\nnh1)\}\,$ 
for $G=G_4$.
\end{remark}

\subsection{Fundamental domains and quotient spaces}
We now indicate how to identify fundamental domains for each of the above group actions on $\R^2$ and recognize the quotient space using boundary identifications on such fundamental domains.

\begin{lemma}\label{onefs}
Every $G$-or\-bit intersects the rectangle
$Q=[\hs0,a/2\hh]\times[\hs0,b/2\hh]$ for $G=G_3, G_6$, and $Q=[\hs0,a/4\hh]\times[-b/2,b/2\hh]$ for $G=G_1, G_2, G_4,G_5, G_7$.
\end{lemma}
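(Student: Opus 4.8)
The plan is to verify, case by case, that every $G$-orbit meets the stated rectangle $Q$, by exhibiting for an arbitrary $(x,y)\in\R^2$ an explicit sequence of generators carrying it into $Q$. The key observation is that in each of the seven groups $G=G_1,\dots,G_7$ the lattice of pure translations contained in $G$ is large enough to translate any point into a bounded box, after which the remaining generators (the ones with nontrivial linear part, i.e.\ a reflection or a point rotation by $\pi$) fold that box onto $Q$. So I would organize the argument around two steps: first, \emph{translate into a coarse fundamental box}; second, \emph{fold the coarse box onto $Q$ using the order-two generators}.

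For the first step, I would note that in every example $G$ contains $T_1$ (translation by $(a,0)$, or, after the coordinate renamings indicated in the text, a translation by $(a,0)$ is available) and $T_2$ (translation by $(0,b)$). Hence, modulo $\langle T_1,T_2\rangle$, every orbit meets the rectangle $R=[0,a]\times[0,b]$ (or $[0,a]\times[-b/2,b/2]$ after a further shift by $T_2$, whichever is more convenient for the subsequent folding). For those groups where a ``half-period'' translation is present — e.g.\ $T_4$ of the form $(x+\tfrac12a,\,y+\tfrac12b)$ in $G_4,G_5$, or $(x,y+\tfrac12b)$-type generators in $G_6,G_7$ — I would use it to shrink $R$ in the appropriate direction by a factor of two already at this stage, which accounts for the width $a/4$ versus $a/2$ dichotomy in the statement.

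For the second step, I would use the reflection/rotation generators to fold the coarse box onto the half (or quarter) that is $Q$. Concretely: a generator of the form $(x,y)\mapsto(-x,y)$ (present for $G_3,G_5,G_6,G_7$ up to translation) reflects the strip across the line $x=0$, so after combining with a translation by $a$ or $a/2$ it identifies $[0,a]$ (resp.\ $[0,a/2]$) with its reflection, cutting the $x$-range in half; a generator of the form $(x,y)\mapsto(x,-y)$ or the point reflection $(x,y)\mapsto(-x,-y)$ (present for $G_1,G_2,G_4$) does the analogous folding in $y$ (resp.\ simultaneously in both variables). Carefully chaining the translations found in Step~1 with these order-two maps then lands every point in $Q=[0,a/2]\times[0,b/2]$ for $G_3,G_6$ and in $Q=[0,a/4]\times[-b/2,b/2]$ for $G_1,G_2,G_4,G_5,G_7$. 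I would present this as a short explicit computation for one representative group from each shape of $Q$ and remark that the others are entirely analogous (indeed the text already notes that cases (b),(c) reduce to case (a) by a change of coordinates, and Remark~\ref{lpart} records that all seven linear parts sit inside $\Z_2\oplus\Z_2$, so there is no essential variety).

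The main obstacle I anticipate is purely bookkeeping: keeping track of the several coordinate renamings (swapping $x\leftrightarrow y$, $a\leftrightarrow b$, rescaling $b\mapsto 2b$) that were used in the derivations of \eqref{eq:exa10} and \eqref{eq:exa11}, and of whether the relevant half-period translation acts in the $x$- or the $y$-direction, so that one genuinely obtains the $a/4$ width exactly when claimed and not $a/2$. There is no conceptual difficulty — the group is virtually a lattice and the fundamental domain is forced — so the proof is a finite check; the only care needed is to make sure $Q$ as written is neither too small (some orbit missing it) nor mislabeled in a given case.
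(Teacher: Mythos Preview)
Your plan is correct and follows essentially the same approach as the paper's proof: first use the translations in $G$ to land in a coarse rectangle, then use the remaining (order-two) generators, or compositions thereof, to ``fold'' that rectangle down to $Q$. The paper formalizes the folding step via an explicit \emph{elimination} procedure (for each case it names a specific $T\in G$ sending a subregion $S$ of the coarse rectangle into its complement), and it actually carries out all seven cases rather than doing a representative one and asserting the rest are analogous; your proposal to treat only one case per shape of $Q$ would be a slight loss of rigor here, since the compositions needed (e.g.\ $T_4\circ T_3$ for $G_2$, $T_2^{-1}\circ T_4\circ T_3$ for $G_4$ and $G_7$) differ enough across cases that ``entirely analogous'' hides genuine bookkeeping.
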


\begin{proof}
If a group $\,G\,$ of af\-fine transformations of a real vector 
space contains the translations by all elements of a basis 
$\,e_1\w,\dots,e_n\w$, then every orbit of $\,G\,$ intersects the set 
$\,\{\mu_1\w e_1\w\nh+\ldots+\mu_n\w e_n\w:
\mu_1\w,\dots,\mu_n\w\nh\in[\nh-\nh1/2,1/2\hh]\}$. 
In fact, any $\,x\in\R\,$ differs by an integer from some 
$\,y\in[\nh-\nh1/2,1/2\hh]$.

All $\,G$-or\-bits intersect the rectangle 
$\,R=[-a/2,a/2\hh]\times[-b/2,b/2\hh]$. For $G=G_1,G_2,G_3,G_4,G_5$ we see this by applying the reasoning in the preceding paragraph to $n=2$, $e_1=(a,0)$ and $e_2=(0,b)$. For $G=G_6,G_7$, we note instead that $T_2^k(x,y)=(x,y+kb)$ and $T_4^l(x,y)=(x+la/2,y+lb/2)$ for all $k,l\in\mathds Z$. A suitable choice of $l$ will thus yield
the condition $x\in[0,a/2]$, and then some $k$ will ensure that $y\in[-b/2,b/2]$.\smallskip

Let the phrase ``we use $T$ to eliminate $S$'' now mean that a transformation $T\in G$ sends a set $S\subseteq R$ onto a subset of $R\smallsetminus S$ (and so the stated property still holds when $R$ is replaced by $R\smallsetminus S$).\smallskip

First, when $G=G_1, G_2,G_4$, $G_5$, $G_6$ and $G_7$ (or, when $G=G_3$), $T_3\w$ (or, $T_4\w$) 
may be used to eliminate $\,[-a/2,0)\times[-b/2,b/2\hh]$. Similarly, when $G=G_2$ (or, $G=G_3$ and $G=G_6$) the set $\,(a/4,a/2)\times[-b/2,b/2\hh]$ (or, 
$\,(0,a/2\hh]\times[-b/2,0)$) is eliminated with the aid of 
$T_4\w\circ T_3\w$ (or, respectively, $T_3\w$). Next, when $G=G_4$ or $G=G_7$, we eliminate $\,(a/4,a/2)\times[\hs0,b/2\hh]\,$ (or, 
$\,(a/4,a/2)\times[-b/2,0\hs]$) using $T_4\w\circ T_3\w$ (or, 
respectively, $T_2^{-\nnh1}\circ T_4\w\circ T_3\w$). Finally, in both 
cases $G=G_1$ or $G=G_5$, we use $T_2^{-\nnh1}\circ T_4\w$ (or, 
$T_4\w$) to eliminate $\,(a/4,a/2)\times[\hs0,b/2\hh]\,$ or, 
respectively, $\,(a/4,a/2)\times[-b/2,0\hs]$.
\end{proof}

\begin{lemma}\label{psixy} Every 
$T\in G\,$ is given by:
\begin{equation}\label{psi}
T(x,y)\,=\,(\delta x+ka/2,\,\ve y+lb/2)\hskip12pt\text{where}
\hskip7pt\delta,\ve\in\{1,\nnh-\nnh1\}\hskip6pt\text{and}\hskip6pt
k,l\in\mathds Z\hh,
\end{equation}
for all\/ $\,(x,y)\in\R^2\nh$. Furthermore, whenever\/ $T\in G\,$ is of 
the form\/ {\rm(\ref{psi})}, we have
\begin{equation}\label{sgp}
\begin{array}{rl}
\mathrm{(i)}\hskip18pt\text{when}\ G=G_1:&\hskip18pt(-\nnh1)^k\delta\ve\,
=\,(-\nnh1)^{k+l}=\,1\hh,\\
\mathrm{(ii)}\hskip18pt\text{when}\ G=G_2:&\hskip18pt(-\nnh1)^k\delta\ve\,=\,(-\nnh1)^l=\,1\hh,\\
\mathrm{(iii)}\hskip18pt\text{when}\ G=G_3:&\hskip18pt\ve\,=\,(-\nnh1)^k=\,1\hh,\\
\mathrm{(iv)}\hskip18pt\text{when}\ G=G_4:&\hskip18pt\delta\ve\,=\,(-\nnh1)^{k+l}=\,1\hh,\\
\mathrm{(v)}\hskip5pt\text{when}\ G=G_5, G_6:&\hskip18pt\ve\,=\,(-\nnh1)^{k+l}=\,1\hh,\\
\mathrm{(vi)}\hskip18pt\text{when}\ G=G_7:&\hskip18pt\ve\,=\,(-\nnh1)^{k+l}\delta=\,1\hh.
\end{array}
\end{equation}
\end{lemma}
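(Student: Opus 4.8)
The plan is to prove \eqref{psi} by a closure argument and then extract the sign relations \eqref{sgp} by tracking a handful of sign-valued homomorphisms on the generators. First I would let $S$ denote the set of affine maps $(x,y)\mapsto(\delta x+ka/2,\,\ve y+lb/2)$ with $\delta,\ve\in\{1,-1\}$ and $k,l\in\Z$, and check that $S$ is a subgroup of $\aff(\R^2)$: composing two such maps produces the map with linear part $\operatorname{diag}(\delta'\delta,\ve'\ve)$ and translation parameters $k''=\delta'k+k'$, $l''=\ve'l+l'$, which are again integers, and inverses are of the same shape. Inspecting the generator lists \eqref{eq:exa7}, \eqref{eq:exa1}, \eqref{eq:exa2}, \eqref{eq:exa4}, \eqref{eq:exa5}, \eqref{eq:exa10} and \eqref{eq:exa11}, every generator of each of $G_1,\dots,G_7$ visibly lies in $S$ — the translations $T_1,T_2$ have $k$ or $l$ equal to $\pm2$, and all the remaining generators have $k,l\in\{0,1\}$ — so $G\subseteq S$, which is exactly \eqref{psi}.

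For \eqref{sgp}, I would first note that the data $(\delta,\ve,k,l)$ is uniquely determined by $T\in S$: the linear part gives $\delta,\ve$, and $T(0,0)=(ka/2,lb/2)$ pins down $k,l$ because $a,b>0$. The composition formula from the previous step then shows that $\delta$, $\ve$, $T\mapsto(-1)^k$ and $T\mapsto(-1)^l$ are all homomorphisms $S\to\{1,-1\}$ — the one point needing care is $(-1)^{k''}=(-1)^{\delta'k+k'}=(-1)^{k+k'}$, valid because $-k\equiv k\pmod 2$ regardless of the sign $\delta'$. Consequently each expression occurring in \eqref{sgp}, namely $(-1)^k\delta\ve$, $(-1)^{k+l}$, $(-1)^l$, $\ve$ and $(-1)^{k+l}\delta$, is a homomorphism $S\to\{1,-1\}$ and therefore has a subgroup as kernel. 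Each line of \eqref{sgp} asserts that, for the relevant $G_i$, two such homomorphisms are identically $1$; since a common kernel is a subgroup, it suffices to evaluate these homomorphisms on the at most four generators of each $G_i$ and verify the value is $1$, which is a routine case check (for $G_1$ one may first discard the redundant generator $T_5=T_4\circ T_3$). Equivalently, one may package $(\delta,\ve,(-1)^k,(-1)^l)$ into a single homomorphism $S\to\{1,-1\}^4$ and observe that \eqref{sgp} merely records the image of each $G_i$ under it.

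The hard part, such as it is, is purely organizational: keeping straight the sign twist $k''=\delta'k+k'$ in the composition law (which is precisely why $(-1)^k$, rather than $k$ itself, is the natural invariant) and running the generator verification uniformly through all seven groups $G_1,\dots,G_7$. No deeper input beyond the explicit generator lists recalled above is required.
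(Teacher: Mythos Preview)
Your proposal is correct and follows essentially the same route as the paper: verify that the set $S$ of maps of the form \eqref{psi} is a group containing the generators, observe that $\delta$, $\ve$, $(-1)^k$, $(-1)^l$ are homomorphisms $S\to\{1,-1\}$ (using exactly your observation that the sign twist in $k''=\delta'k+k'$ disappears modulo $2$), and then reduce \eqref{sgp} to a generator check. The paper's proof is more compressed but structurally identical.
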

\begin{proof}Af\-fine transformations of type (\ref{psi}) form a group, and 
each of the mappings assigning to $T$ with (\ref{psi}) the value 
$\,\delta$, or $\,\ve$, or $\,(-\nnh1)^k\nh$, or $\,(-\nnh1)^l\nh$, is a group 
homo\-mor\-phism into $\,\{1,\nnh-\nnh1\}$. This is immediate since, 
expressing (\ref{psi}) as $T\sim(\delta,\ve,k,l)$, we have 
$T\circ\hat T
\sim(\delta\hat\delta,\ve\hat\ve,k+\hat k,l+\ve \hat l\hs)\,$ 
and $T^{-\nnh1}\nh\sim(\delta,\ve,-\delta k,-\ve l)\,$ whenever 
$T\sim(\delta,\ve,k,l)\,$ and 
$\,\hat T\sim(\hat\delta,\hat\ve,\hat k,\hat l)$, while 
$\,\mathrm{Id}\sim(1,1,0,0)$. Each of the terms equated to $\,1\,$ in 
(\ref{sgp}) thus represents a group homo\-mor\-phism, and $\,G\,$ is, in each 
case, contained in its kernel, since so is, clearly, the generator set for 
each $\,G$, specified above.
\end{proof}
We can now describe the quotient space $\R^2\nnh/G$ of the action of 
$G$.
\begin{proposition}\label{quott}
If $G=G_3$ or $G=G_6$, two points of $\,Q=[\hs0,a/2\hh]\times[\hs0,b/2\hh]\,$ lie in the same\/ $\,G$-or\-bit if and only if they are $(x,0)\,$ and $\,(x,b/2)$ or, 
respectively, $(x,0)\,$ and $\,(-x+a/2,b/2)$, for some $\,x\in[\hs0,a/2\hh]$.

In the other five examples, a two-el\-e\-ment subset of\/ 
$\,Q=[0,a/4]\times[-b/2,b/2]\,$ is contained in a single $G$-orbit if and only if it is $\,\{(x,-b/2),(x,b/2)\}$, where $x\in[\hs0,a/4\hh]$, or is of the form:
\begin{itemize}
\item[(a)] for $G=G_1, G_2, G_4$ only:
$\,\,\{(0,y),(0,-y)\}\,\,$ for some\/ $\,y\in(0,b/2\hh]$.
\item[(b)] for $G=G_4$ only: $\,\,\{(a/4,y),(a/4,-y+b/2)\}\,\,$ for 
some\/ $\,y\in[\hs0,b/4)$.
\item[(c)] for $G=G_4$ only: $\,\,\{(a/4,y),(a/4,-y-b/2)\}\,\,$ for 
some\/ $\,y\in(-b/4,0\hs]$.
\item[(d)] for $G=G_1,G_5, G_7$ only: 
$\,\,\{(a/4,y),(a/4,y+b/2)\}\,\,$ for some\/ $\,y\in[-b/2,0\hs]$;
\item[(e)] for $G=G_7$ only: $\,\,\{(0,y),(0,y+b/2)\}\,\,$ for some\/ $\,y\in[-b/2,0\hs]$.
\end{itemize}
\end{proposition}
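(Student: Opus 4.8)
The plan is to reduce Proposition~\ref{quott} to a finite, elementary case analysis by combining Lemma~\ref{onefs}, which guarantees that every $G$-orbit meets the rectangle $Q$, with the normal form for elements of $G$ supplied by Lemma~\ref{psixy}. Concretely, suppose $p=(x,y)$ and $q=(x',y')$ are two points of $Q$ lying in a single $G$-orbit, say $q=T(p)$ with $T\in G$. By \eqref{psi} we may write
\[
x'=\delta x+ka/2,\qquad y'=\varepsilon y+lb/2,\qquad \delta,\varepsilon\in\{1,-1\},\ k,l\in\Z,
\]
subject to the parity constraints \eqref{sgp} appropriate to the group under consideration. The whole proof then consists of extracting from these relations, group by group, exactly which pairs $\{p,q\}$ can occur, and then exhibiting for each listed pair an explicit element of $G$ realizing it.

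First I would analyze the $x$-coordinate relation $x'=\delta x+ka/2$ in isolation. Since $x$ and $x'$ both lie in $[0,a/4]$ (or in $[0,a/2]$ when $G=G_3$ or $G=G_6$), the difference $x'-\delta x$ is confined to a short interval, which restricts $k$ to at most two values and forces $x$ to lie in $\{0,a/4\}$ (resp.\ $\{0,a/2\}$) unless $\delta=1$ and $k=0$, in which case $x'=x$. An identical argument applied to $y'=\varepsilon y+lb/2$, using $y,y'\in[-b/2,b/2]$, restricts $l$ to $\{-2,-1,0,1,2\}$ and forces $y$ onto the boundary values $\{0,\pm b/4,\pm b/2\}$ unless $\varepsilon=1$ and $l=0$. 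Substituting the finitely many surviving quadruples $(\delta,\varepsilon,k,l)$ into \eqref{sgp} eliminates all but a handful of cases for each $G_i$; in each remaining case one reads off directly the pair of points being identified, and checks that it is one of the alternatives listed in the statement. Handling $G_3,G_6$ on their larger square $Q=[0,a/2]\times[0,b/2]$ and the other five on $Q=[0,a/4]\times[-b/2,b/2]$ separately, this produces the ``only if'' direction. One should also observe, from the very same coordinate bounds, that no $G$-orbit can meet $Q$ in three or more points (the constraints leave no room for a third distinct intersection), so that the enumeration of two-element subsets is exhaustive and $\R^2/G$ is genuinely $Q$ with precisely these boundary identifications.

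For the converse (``if'') direction, for each listed pair I would write down an explicit element of $G$ carrying one point to the other, assembled from the generators $T_1,\dots,T_4$ (or $T_2,T_3,T_4$): for instance $T_2$ realizes the generic identification $\{(x,-b/2),(x,b/2)\}$, the reflection $T_3$ (where $\delta=-1$ fixes the line $x=0$) realizes the pairs $\{(0,y),(0,-y)\}$, and compositions such as $T_4\circ T_3$ or $T_2^{-1}\circ T_4\circ T_3$ realize the diagonal identifications along $x=a/4$; the remaining cases are similar.

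The routine but genuinely delicate part is the bookkeeping at the corners and edges of $Q$: several quadruples $(\delta,\varepsilon,k,l)$ pass the parity test \eqref{sgp} only when $p$ lies on $\partial Q$, and one must verify carefully that the induced identifications are exactly those in the statement, with the ranges of the parameters $x,y$ chosen so as to exclude the fixed points of the relevant isometries (which become interior orbifold points rather than gluing pairs) and so that two apparently different identifications meeting at a corner of $Q$ collapse to the same pair. I do not expect any conceptual obstacle beyond carrying out this finite enumeration cleanly.
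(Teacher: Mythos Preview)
Your plan is essentially the paper's own proof: the paper also writes $T$ in the normal form \eqref{psi}, uses the interval constraint on $x,x'\in[0,a/4]$ (resp.\ $[0,a/2]$) to pin down $(\delta,k)$ and force $x\in\{0,a/4\}$ unless $(\delta,k)=(1,0)$, then runs through $|l|=2,1,0$ on the $y$-side, invoking \eqref{sgp} at each step; the ``if'' direction is likewise dispatched by naming explicit products of generators.

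One small correction: your side remark that no $G$-orbit meets $Q$ in three or more points is not needed for the proposition (which only classifies two-element subsets) and is in fact false at corners---for $G_4$, the orbit of $(a/4,0)$ meets $Q$ in the three points $(a/4,0)$, $(a/4,b/2)$, $(a/4,-b/2)$, as one sees from items (b), (c) and the generic identification. Drop that claim; the rest of your outline goes through.
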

\begin{proof}The `if' part of our assertion follows if one uses: 
$T_4\w\circ T_3\w$ and $T_2\w$ in the first or, respectively, 
second paragraph of the lemma; $T_3\w$ for (a); $T_4\w\circ T_3\w$ 
and $T_2^{-\nnh1}\circ T_4\w\circ T_3\w$ in (b)--(c); 
$T_4\w$ and $T_2^{-\nnh1}\circ T_4\w$ for (d).

Let $\,Q\,$ now contain both $\,(x,y)\,$ and 
$\,(\hat x,\hat y)=T(x,y)\ne(x,y)$. Note that 
$\,(\hat x,\hat y)=(\delta x+ka/2,\,\ve y+lb/2)\,$ by (\ref{psi}). We will 
repeatedly invoke the obvious fact that
\begin{equation}\label{end}
\begin{array}{l}
|p-q|\,\le\,c\,\,\mathrm{\ whenever\ }\,\,p\,\,\mathrm{\ and\ 
}\,\,q\,\,\mathrm{\ lie\ in\ a\ closed\ interval\ of}\\
\mathrm{length\ }\,c\mathrm{,\ with\ equality\ only\ if\ }\,p\,\mathrm{\ and\ 
}\,q\,\mathrm{\ are\ the\ endpoints.}
\end{array}
\end{equation}
For $G=G_3,G_6$, $\,Q=[\hs0,a/2\hh]\times[\hs0,b/2\hh]$. 
Thus, with $\,\ve=1$, cf.\ (\ref{sgp}),
\begin{equation}\label{lie}
\mathrm{i)}\hskip6pt
\{2x/a\hh,\,\,k+2\delta x/a\}\,\subseteq\,[\hs0,1\hh]\hh,
\hskip22pt\mathrm{ii)}\hskip6pt\{2y/b\hh,\,\,l+2y/b\}\,
\subseteq\,[\hs0,1\hh]\hh. 
\end{equation}
As an immediate consequence of (\ref{end}), if $\,a,b\in(0,\infty)$,
\begin{equation}\label{psb}
\begin{array}{l}
\mathrm{for\ any\ }\,(\delta,k,x)\in\{1,-\nnh1\}\times\mathds Z\times[\hs0,a/2\hh]\,
\mathrm{\ with\ (\ref{lie}.i)\ one\ has\ }\,k\,=\,x\,=\,0\\
\mathrm{or\ }\hs(\delta,k)\in\{(1,0),(-\nnh1,1)\}\hs\mathrm{\ or\ 
}\hs(\delta,k,x)\in\{(1,1,0),(1,-\nnh1,a/2),(-\nnh1,2,a/2)\}\hh,\\
\mathrm{while\ }\,l=0\,\mathrm{\ or\ }\hs(y,l)=(0,1)\hs\mathrm{\ whenever\ 
}\,(l,y)\in\mathds Z\times[\hs0,b/2\hh]\,\mathrm{\ satisfy\ (\ref{lie}.ii).}
\end{array}
\end{equation}
Both when $G=G_3$ (where $\,k\,$ is even by (\ref{sgp}.ii)), and when $G=G_6$
(with evenness of $\,k$ as an additional assumption), (\ref{psb}) gives 
$\,k=0\,$ or $\,(\delta,k,x)=(-\nnh1,2,a/2)$. Hence $\,\hat x=x$ (as 
$\,\delta=1\,$ when $\,k=0$, unless $\,x=0$, cf.\ (\ref{psb})). The resulting 
relation $\,y\ne\hat y=y+lb/2$ means that $\,l\ne0$, and so, from (\ref{psb}), 
$\,(y,l)=(0,1)$. This proves our claim about $G_3$, and at the same time 
contradicts (\ref{sgp}.iv) for $G_6$, so that, when $G=G_6$, $\,k\,$ and 
$\,l\,$ must be odd. Now (\ref{psb}) yields the required assertion about 
$G_6$.

For $G=G_1,G_2,G_4,G_5,G_7$, our assumption that 
$\,Q=[\hs0,a/4\hh]\times[-b/2,b/2\hh]$ contains both $\,(x,y)\,$ and 
$\,(\hat x,\hat y)\ne(x,y)=(\delta x+ka/2,\,\ve y+lb/2)\,$ has an 
immediate consequence:
\begin{equation}\label{thr}
\hat x=x\hh,\hskip7pt\mathrm{and\ }\,(\delta,k)=(1,0)\,\mathrm{\ or\ }\,
(\delta,k,x)=(-\nnh1,0,0)\,\mathrm{\ or\ }\,(\delta,k,x)=(-\nnh1,1,a/4)\hh.
\end{equation}
(Namely, both $\,4x/a\,$ and $\,2k+4\delta x/a\,$ lie in $\,[\hs0,1\hh]$, and 
so, due to (\ref{end}), $\,k=0\,$ if $\,\delta=1$, while the case 
$\,\delta=-\nnh1$ allows just two possibilities: $\,(k,x)=(0,0)\,$ or 
$\,(k,x)=(1,a/4)$).) The relation $\,(x,y)\ne(\hat x,\hat y)\,$ must thus be 
realized by the $\,y\,$ components, that 
is, $\,y\,$ and $\,\ve y+lb/2\,$ are two different numbers in 
$\,[-b/2,b/2\hh]\,$ or, equivalently,
\begin{equation}\label{eqv}
2y/b\,\mathrm{\ is\ different\ from\ }\,l+2\ve y/b\,\mathrm{\ and\ both\ lie\ 
in\ }\,[-\nnh1,1\hh]\mathrm{,\ while\ }-\nnh2\le l\le2\hh,
\end{equation}
the last conclusion being obvious from (\ref{end}).

First, if $\,|l|=2$, (\ref{end}) leads to the case mentioned in 
the second paragraph of the lemma. We may therefore assume from now on that 
$\,|l|\le1$.

Next, suppose that $\,l=0$, and so $\,\ve=-\nnh1\,$ by (\ref{eqv}). 
By (\ref{sgp}.iv), this excludes the cases $G=G_5$ and $G=G_6$. In the 
remaining cases $G=G_1,G_2,G_4$, (\ref{sgp}) gives 
$\,(-\nnh1)^k\delta=-\nnh1\,$ which, combined with (\ref{thr}) and 
(\ref{psi}), shows that $\,\hat x=x=0\,$ and $\,\hat y=-y$. Hence (a) follows.

Finally, let $\,|l|=1$, so that (\ref{sgp}) excludes the case $G=G_2$, 
yields $\,(-\nnh1)^k\nh=-\nnh1\,$ for $G=G_1,G_4,G_5$, and 
$\,(-\nnh1)^k\delta=-\nnh1\,$ for $G=G_7$.
Thus, by (\ref{thr}), $\,(\delta,k,x)=(-\nnh1,1,a/4)\,$ or, for $G=G_7$ only, $\,(\delta,k,x)=(-\nnh1,0,0)$, while (\ref{sgp}) provides a 
specific value of $\,\ve\,$ in each case. Using (\ref{thr}) and (\ref{psi}) 
again, we obtain (b), (c), (d) or (e).
\end{proof}

From Proposition~\ref{quott}, one easily arrives to the following conclusions:
\begin{corollary}\label{thm:corconclusions}
The quotient $\R^2/G$ is identified with the following flat $2$-orbifold (cf.~Table~\ref{table:flat2orbifolds}):
\begin{itemize}
\item[(a)] $\R P^2(2,2;)$, if $G=G_1$ (see Figures~\ref{fig:caseG1-1} and \ref{fig:caseG1-2});
\item[(b)] $D^2(2,2;)$, if $G=G_2$ (see Figures~\ref{fig:caseG2-1}---\ref{fig:caseG2-4});
\item[(c)] $S^1\times I$ if $G=G_3$ (see Figure~\ref{fig:caseG3-1});
\item[(d)] $S^2(2,2,2,2;)$ if $G=G_4$ (see Figures~\ref{fig:caseG4-1}---\ref{fig:caseG4-3});
\item[(e)] M\"obius band if $G=G_5$ and when $G=G_6$ (see Figures~\ref{fig:caseG5-1}---\ref{fig:caseG5-2} and \ref{fig:caseG6-1}---\ref{fig:caseG6-2});
\item[(f)] Klein bottle if $G=G_7$ (see Figures~\ref{fig:caseG7-1}---\ref{fig:caseG7-3}).
\end{itemize}
\end{corollary}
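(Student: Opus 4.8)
The plan is to read each quotient directly off the orbit identifications produced in Proposition~\ref{quott} and then match the resulting identified rectangle against the models in Table~\ref{table:flat2orbifolds}. By Lemma~\ref{onefs} the fundamental domain $Q$ is a rectangle, and by Proposition~\ref{quott} the space $\R^2/G$ is isometric to $Q$ with its boundary (and, for $G=G_1,G_2,G_4$, part of the segment $\{x=0\}$ or $\{x=a/4\}$) glued according to the listed two-element orbits. For each of the seven groups I would exhibit a short sequence of Euclidean \emph{scissors moves} — cut $Q$ along a segment joining an identified pair of points and re-glue along the corresponding identified edge — converting this identification space into one of the standard polygons of Table~\ref{table:flat2orbifolds}. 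These are exactly the manipulations drawn in Figures~\ref{fig:caseG1-1}--\ref{fig:caseG7-3}: for $G_1$ and $G_4$ translate the upper corner triangle of $[0,a/4]\times[-b/2,b/2]$ down by $(0,-b)$ and relabel; for $G_2$ rotate the lower half of the rectangle by $\pi$ about the origin and re-triangulate; for $G_3,G_5,G_6$ reflect and translate a half-rectangle; for $G_7$ reflect the lower half about its right side and translate up by $(0,b/2)$.

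Once $\R^2/G$ is presented as a polygon with edge identifications and finitely many marked interior points, I would recognize it as a flat $2$-orbifold by three routine checks. First, the underlying surface $|\mathcal O|$ is pinned down by the classification of compact surfaces: the edge-word of the identified polygon gives a CW-structure, from which one reads off $\chi$, orientability, and whether free boundary arcs are present. Second, the interior cone points are the conjugacy classes of points with nontrivial (hence cyclic) stabilizer in $G$; since the linear part of every $G_i$ contains no rotation other than $\pm\mathrm{Id}$ (Remark~\ref{lpart}), each such stabilizer has order $2$, so the cone points are precisely the classes of fixed points of the $\pi$-rotations $(x,y)\mapsto(-x,-y)+\text{const}$ lying in $G$, each labelled $2$, and these are enumerated from the translation lattice. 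Third, $\R^2/G$ has boundary exactly where $G$ contains a genuine reflection (a linear element $\mathrm{diag}(\pm1,\mp1)$ realized with a fixed line, not merely as a glide reflection): direct inspection of the generators shows that $G_3$ has two families of parallel reflection lines (hence two boundary circles), $G_5$ and $G_6$ one family (one boundary circle), while in $G_1,G_2,G_4,G_7$ the orientation-reversing elements are glide reflections, so those quotients are closed; no corner reflectors ever arise, since no two reflection lines of any $G_i$ meet (for $G_1,G_2$ the product of the two nontrivial linear reflections is $-\mathrm{Id}$, a rotation). Collating the outcomes gives $G_1\mapsto\R P^2(2,2;)$, $G_2\mapsto D^2(2,2;)$, $G_3\mapsto S^1\times I$, $G_4\mapsto S^2(2,2,2,2;)$, $G_5,G_6\mapsto M^2$, $G_7\mapsto K^2$.

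As an independent cross-check I would, for each $G_i$, extract the translation lattice $L_i$ and holonomy $H_i$ from the given presentation, note that $G_i$ is a $2$-dimensional crystallographic group, and invoke the Bieberbach Theorems (only $17$ affine classes) to name the corresponding wallpaper group, whose orbifold quotient is listed in Table~\ref{table:flat2orbifolds}; for instance $G_4=L_4\rtimes\langle(x,y)\mapsto(-x,-y)\rangle$ with $L_4=\langle(a/2,b/2),(a,0)\rangle$ is the group $p2$, i.e.\ the pillowcase $S^2(2,2,2,2;)$, matching the cut-and-paste computation.

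The only genuine work — and the only place errors can creep in — is the bookkeeping of edge orientations and of the positions of the order-$2$ cone points through the scissors moves, together with making sure the enumeration of rotation-center classes is exhaustive; this is precisely why the figures are supplied, and the cases deserving the most care are $G_1$ (which lands on the nonorientable $\R P^2(2,2;)$) and $G_2$ (the half-pillowcase $D^2(2,2;)$, carrying both a boundary circle and two interior cone points).
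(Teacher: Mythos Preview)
Your overall strategy---reading the quotient off the fundamental domain and boundary identifications supplied by Lemma~\ref{onefs} and Proposition~\ref{quott}, then performing the scissors moves depicted in the figures---is exactly what the paper does (the paper's proof is literally the sentence ``From Proposition~\ref{quott}, one easily arrives at the following conclusions,'' with the figures doing the work). Your additional cross-checks via stabilizers and wallpaper-group identification are a sensible supplement.

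There is, however, a concrete error in your boundary analysis that makes the write-up internally inconsistent. You assert that in $G_2$ ``the orientation-reversing elements are glide reflections, so those quotients are closed.'' This is false: the composite $T_4\circ T_3(x,y)=(-x+\tfrac{a}{2},y)$ is a genuine reflection about the line $x=a/4$, and more generally (using~(\ref{sgp}.ii)) every element of $G_2$ with linear part $\mathrm{diag}(-1,1)$ has the form $(x,y)\mapsto(-x+ka/2,\,y+lb/2)$ with $k$ odd and $l$ even, which for $l=0$ is a reflection about $x=ka/4$. These mirror lines are all vertical and parallel (so your ``no corner reflectors'' conclusion survives), but they do produce a boundary circle---which is exactly why $\R^2/G_2=D^2(2,2;)$ is a disk and not a closed surface. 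Your final identification $G_2\mapsto D^2(2,2;)$ is correct, but it contradicts the sentence just quoted; you should amend the reflection bookkeeping so that $G_2$ joins $G_3,G_5,G_6$ as a group containing genuine reflections (one family of parallel lines, hence one boundary circle).
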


\section{Alternative proof of Proposition~\ref{prop:toritotori}}\label{sec:appendixB}

We now provide an alternative and elementary proof of Proposition~\ref{prop:toritotori}, without making use of results on equivariant Gromov-Hausdorff convergence.

In the noncollapsing case $\Vol(T^n,\g_i)\ge v_0>0$, the dimension of the limit space is $n$, and by the classical compactness theorem for lattices of Mahler~\cite{Mahler46}, the limit is again a flat torus $(T^n,\h)$. In the collapsing case, we have the following:

\begin{claim}\label{claim:sequenceofbases}
Suppose that $\Vol(T^n,\g_k)\searrow 0$ as $k\nearrow+\infty$. For each $k\in\N$, choose a lattice $L^{(k)}\subset\R^n$ such that $\g_k$ is isometric to the induced metric $\g_{L^{(k)}}$ on $\R^n/L^{(k)}$.
Up to passing to subsequences, there exists a $\mathds Z$-basis 
$\big\{v_1^{(k)},\ldots,v_n^{(k)}\big\}$ of $L^{(k)}$ such that for some $0\leq m<n$:
\begin{enumerate}[\rm (i)]
\item If $m\geq1$, then $w_j:=\lim\limits_{k\to\infty} v_j^{(k)}$ are linearly independent for $j=1,\dots, m$;
\item $\lim\limits_{k\to\infty} v_j^{(k)}=0$ for $j=m+1,\dots, n$.
\end{enumerate}
\end{claim}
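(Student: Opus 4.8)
The plan is to induct on $n$, using only classical tools from the geometry of numbers (Minkowski's convex body theorem, the inequality $\lambda_j(\Lambda)\le c_n\,\mu(\Lambda)$ bounding successive minima by the covering radius $\mu$, and Mahler's compactness theorem), together with the bounded-diameter hypothesis of Proposition~\ref{prop:toritotori}, which says $\mu(L^{(k)})=\diam(\R^n/L^{(k)})\le D$ for a fixed $D$. The base case $n=1$ is immediate, since $L^{(k)}=\Z v^{(k)}$ with $\|v^{(k)}\|=\Vol(T^1,\g_k)\to0$, so $m=0$ works. For the inductive step, I would first choose a shortest nonzero vector $v_1^{(k)}\in L^{(k)}$; it is automatically primitive, and Minkowski's theorem gives $\|v_1^{(k)}\|=\lambda_1(L^{(k)})\le c_n\,\Vol(T^n,\g_k)^{1/n}\to0$, so $v_1^{(k)}\to0$. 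This is ultimately what forces $m<n$.

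Next, let $\pi^{(k)}$ be the orthogonal projection of $\R^n$ onto $(v_1^{(k)})^\perp$ and set $\bar L^{(k)}=\pi^{(k)}(L^{(k)})$, a rank-$(n-1)$ lattice (because $v_1^{(k)}$ is primitive) with $\mu(\bar L^{(k)})\le\mu(L^{(k)})\le D$ and $\det\bar L^{(k)}=\det L^{(k)}/\|v_1^{(k)}\|$. Since $\lambda_j(\bar L^{(k)})\le c_{n-1}\mu(\bar L^{(k)})\le c_{n-1}D$, Minkowski's second theorem bounds $\det\bar L^{(k)}$ above; passing to a subsequence, $\det\bar L^{(k)}\to\delta\ge0$. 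If $\delta>0$, then $\{\bar L^{(k)}\}$ is a non-collapsing sequence of rank-$(n-1)$ lattices with bounded covering radius, so Mahler's compactness theorem (after passing to a further subsequence and transporting everything to a fixed $\R^{n-1}$ via orthonormal frames of $(v_1^{(k)})^\perp$ converging to one of $e^\perp$, where $v_1^{(k)}/\|v_1^{(k)}\|\to e$) produces a limit lattice $\bar L$ of rank $n-1$, and a reduced $\Z$-basis of $\bar L^{(k)}$ converges to a $\Z$-basis $w_2,\dots,w_n$ of $\bar L$, necessarily consisting of $n-1$ linearly independent nonzero vectors. If $\delta=0$, then $\{\bar L^{(k)}\}$ is collapsing and the inductive hypothesis (again transported to a fixed $\R^{n-1}$) yields, after a subsequence, a $\Z$-basis of $\bar L^{(k)}$ whose vectors converge, the nonzero limits being linearly independent; here the number $m'$ of surviving vectors is at most $n-2$, since $m'=n-1$ would force $\det\bar L^{(k)}\not\to0$.

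In either case I would then lift the chosen $\Z$-basis of $\bar L^{(k)}$ to vectors $v_2^{(k)},\dots,v_n^{(k)}\in L^{(k)}$, reducing each modulo $v_1^{(k)}$ so that its $v_1^{(k)}$-component has length at most $\tfrac12\|v_1^{(k)}\|$; since $\|v_1^{(k)}\|\to0$, each lift converges to the same limit as the corresponding vector of $\bar L^{(k)}$, and $\{v_1^{(k)},\dots,v_n^{(k)}\}$ is a $\Z$-basis of $L^{(k)}$ by the standard fact that a $\Z$-basis of $L^{(k)}/\Z v_1^{(k)}$ together with $v_1^{(k)}$ is one. Reindexing so that the vectors with zero limit (namely $v_1^{(k)}$ together with those coming from shrinking vectors of $\bar L^{(k)}$) are listed last produces the asserted basis, with $m=n-1$ when $\delta>0$ and $m=m'\le n-2$ when $\delta=0$; in particular $m<n$.

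\textbf{Main obstacle.} I expect the principal nuisance to be the bookkeeping around the \emph{moving} hyperplane $(v_1^{(k)})^\perp$: to invoke Mahler's theorem or the inductive hypothesis — both phrased for lattices in a fixed Euclidean space — one must pass to a subsequence along which these hyperplanes converge in the Grassmannian and identify them coherently with a fixed $\R^{n-1}$, checking that limits are not disturbed by this identification (which is harmless precisely because $v_1^{(k)}\to0$). A secondary point requiring care is verifying that the lifted vectors still form a $\Z$-basis of $L^{(k)}$ and that size-reduction modulo $v_1^{(k)}$ preserves convergence; both are routine. (Alternatively, one can bypass the induction by working directly with a reduced $\Z$-basis of $L^{(k)}$: bounded diameter bounds all successive minima, hence the reduced basis vectors; after a subsequence they converge, and a Gram--Schmidt computation — using that for a reduced basis the orthogonalized vectors have norms comparable to the originals — shows the nonzero limits are linearly independent.)
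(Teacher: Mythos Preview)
Your argument is correct, but your primary (inductive) route differs from the paper's. The paper works directly in $\R^n$: using the Buser--Karcher reduction theory, it produces for each $L^{(k)}$ a $\Z$-basis in which the angle between any vector and the hyperplane spanned by the others is bounded below by a dimensional constant $\theta_n$, and then shows (via a geometric minimality argument) that the longest such basis vector has length at most a constant times $\diam(\R^n/L^{(k)})$. Bounded diameter then bounds all basis vectors; a subsequence converges, and the uniform angle bound immediately forces the nonzero limits to be linearly independent. Your induction via projection onto $(v_1^{(k)})^\perp$ replaces that single good-basis lemma with Minkowski's theorems plus Mahler compactness applied one dimension lower; the cost is the bookkeeping you correctly identify around moving hyperplanes, while the benefit is that you never need the explicit angle estimate. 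Your parenthetical alternative---take a reduced basis of $L^{(k)}$, bound its vectors via the successive-minima/covering-radius inequality, and use the Gram--Schmidt comparability of reduced bases to deduce linear independence of the nonzero limits---is essentially the paper's method, with ``reduced basis'' playing the role of the Buser--Karcher special basis.
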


Note that Claim~\ref{claim:sequenceofbases} concludes the proof of Proposition~\ref{prop:toritotori}. 
Indeed, set
\begin{align*}
V^{(k)}&=\operatorname{span}_\R\big\{v_{1}^{(k)},\ldots, v_m^{(k)}\big\}, &\Lambda^{(k)}&=\operatorname{span}_\Z\big\{v_{1}^{(k)},\ldots, v_m^{(k)}\big\},\\
V&=\operatorname{span}_\R\big\{w_{1},\ldots, w_m\big\}, &\Lambda&=\operatorname{span}_\Z\big\{w_{1},\ldots, w_m\big\}.
\end{align*}
For all $k$, the flat torus $(T^{m},\h_k)$ given by the quotient $V^{(k)}/\Lambda^{(k)}$ is a closed subspace of $(T^n,\g_k)$. By property (ii), there are $r_k\searrow 0$, such that the $r_k$-neighborhoods of $(T^{m},\h_k)$ in $(T^n,\g_k)$ are the entire space $(T^n,\g_k)$. 
Moreover, the Gromov-Hausdorff limit of $(T^{m},\h_k)$ is clearly the flat torus $(T^{m},\h)$ given by $V/\Lambda$. Thus, $(T^{m},\h)$ is the Gromov-Hausdorff limit of $(T^n,\g_k)$.

In order to prove Claim~\ref{claim:sequenceofbases}, given a lattice $\Lambda\subset\R^n$, we want to determine a $\mathds Z$-basis of $\Lambda$ in which:
\begin{itemize}
\item the angle between any vector of the basis and the hyperplane generated by the others is greater than or equal to some positive constant depending only on the dimension $n$;\smallskip

\item the length of each vector of the basis is bounded above by some multiple of the diameter of the flat torus $\R^n/\Lambda$.
\end{itemize}
There are related notions in the literature  (``short bases'', ``reduced bases'', etc.);
we use here a construction based on the notion of \emph{$\lambda$-normal basis} introduced  in \cite{BuserKarcher81}.
The results in this section are \emph{far} from being optimal, but they serve for our purposes.

Let us denote by $\langle\cdot,\cdot\rangle$ the Euclidean inner product in $\R^n$, and let us recall that the angle between two nonzero vectors $v,w\in\R^n$ is the number \[\widehat{vw}=\arccos\left(\frac{\langle v,w\rangle}{\vert v\vert\,\vert w\vert}\right)\in[0,\pi].\]
Given a nonzero vector $v\in\R^n$ and a nontrivial subspace $W\subset\R^n$, the angle between $v$ and $W$ is:
\[\mathrm{ang}(v,W)=\min\big\{\widehat{vw}:w\in W\setminus\{0\}\big\}\in\left[0,\tfrac\pi2\right];\]
clearly, if $W'\subset W$, then $\mathrm{ang}(v,W')\ge\mathrm{ang}(v,W)$.

\begin{lemma}\label{thm:lowboundangle}
There exists a constant $\theta_n\in\left]0,\frac\pi2\right]$ that depends only on the dimension $n\ge2$ such that every lattice $\Lambda\subset\R^n$ admits a $\mathds Z$-basis 
$v_1,\ldots,v_n$ such that, denoting by $W_i\subset\R^n$ the subspace spanned by $v_1,\ldots,\hat v_i,\ldots,v_n$, one has:
\begin{equation}\label{eq:lowboundangle}
\mathrm{ang}(v_i,W_i)\ge\theta_n.
\end{equation}
\end{lemma}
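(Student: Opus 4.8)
The statement asserts that every lattice $\Lambda\subset\R^n$ has a $\Z$-basis that is "uniformly non-degenerate" in the sense of \eqref{eq:lowboundangle}, with the angle bound $\theta_n$ depending only on $n$. I would prove this by induction on $n$, building the basis greedily by repeatedly extracting a shortest vector in a suitable sense. The key mechanism is the classical one behind reduced bases: among all $\Z$-bases of $\Lambda$, pick one minimizing some measure of degeneracy, and then show that a basis vector making too small an angle with the span of the others can be improved by subtracting an integer combination of the remaining vectors.

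\textbf{Step 1: reduction to a Minkowski-type minimization.} Order the finitely-many shortest-vector data as follows. Let $v_n$ be a nonzero vector of $\Lambda$ of minimal Euclidean length; extend (using the fact that a shortest vector is always part of some basis, or more safely, just take a shortest vector not in the span of previously chosen ones) to obtain vectors $v_n, v_{n-1},\dots, v_1$ where $v_i$ is a shortest vector of $\Lambda$ linearly independent from $v_{i+1},\dots,v_n$. A standard fact (true up to dimension $n\le 4$, and in general one must instead use the Hermite/LLL-type construction rather than literal successive minima) is that one can arrange these to form a $\Z$-basis; since the paper only wants \emph{some} constant $\theta_n$, I would instead invoke the $\lambda$-normal basis construction of \cite{BuserKarcher81} cited in the text, which directly produces a $\Z$-basis whose vectors have controlled length and whose Gram matrix is bounded below, avoiding the successive-minima subtlety.

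\textbf{Step 2: angle lower bound from the Gram determinant.} Once I have a $\Z$-basis $v_1,\dots,v_n$ coming from such a construction, I would estimate $\mathrm{ang}(v_i,W_i)$ directly. Write $\delta_i = \operatorname{dist}(v_i, W_i)$, the distance from $v_i$ to the hyperplane $W_i=\operatorname{span}(v_1,\dots,\hat v_i,\dots,v_n)$. Then $\sin\mathrm{ang}(v_i,W_i) = \delta_i/|v_i|$. On the one hand, $|v_i|$ is bounded above by a dimensional multiple of the covering radius, hence of $\operatorname{diam}(\R^n/\Lambda)$ (this is exactly the second bullet point requested in the setup, and follows from the $\lambda$-normal basis property). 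On the other hand, the product $\prod_i \delta_i$ equals $|\det(v_1,\dots,v_n)| = \operatorname{covol}(\Lambda)$, which is comparable to $\prod_i |v_i|$ up to a dimensional constant precisely because the basis is reduced (Hadamard's inequality gives one direction for free; the reduction gives the reverse up to a constant $c_n$). Combining, $\prod_i \sin\mathrm{ang}(v_i,W_i) \ge 1/c_n$, and since each factor is at most $1$, each individual factor is bounded below by $1/c_n$, giving $\theta_n = \arcsin(1/c_n)$.

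\textbf{The main obstacle.} The genuinely delicate point is Step 2's claim that for a suitably reduced basis one has $\prod_i|v_i| \le c_n \operatorname{covol}(\Lambda)$ with $c_n$ depending only on $n$ — i.e. that the basis vectors are not much longer than the successive minima, whose product \emph{is} comparable to the covolume by Minkowski's second theorem. For literal successive minima this comparison is Minkowski's theorem (so $c_n$ is explicit), but successive minima need not form a basis in dimensions $\ge 5$; for the $\lambda$-normal / Buser–Karcher basis one must check the analogous inequality, which is where I would lean on \cite{BuserKarcher81}. An entirely self-contained alternative, which I expect is the route actually taken in the paper, is an explicit induction: pass to the quotient lattice $\Lambda/(\Lambda\cap W)$ for $W=\operatorname{span}(v_1,\dots,v_{n-1})$, apply the inductive hypothesis there, lift, and then adjust the last vector $v_n$ by an element of $W\cap\Lambda$ so that its component along $W$ lies in a fundamental domain of the projected lattice — this controls $|v_n|$ and simultaneously forces $\mathrm{ang}(v_n, W)$ to be bounded below by a dimensional constant, after which one re-runs the argument cyclically to fix the remaining angles. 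The bookkeeping of how a single adjustment step interacts with the previously-established angle bounds is the part requiring care, and I would expect the paper to handle it by choosing the basis minimizing $\prod_i|v_i|$ among all $\Z$-bases, so that no improving adjustment exists and the bound $\prod\sin\mathrm{ang}(v_i,W_i)\ge c_n^{-1}$ falls out of the minimality.
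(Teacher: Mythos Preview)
Your overall approach matches the paper's: invoke the Buser--Karcher $\lambda$-normal basis to obtain a $\Z$-basis satisfying $\det(v_1,\ldots,v_n)\ge 2^{-n(n-1)/4}\prod_i|v_i|$, then deduce the angle bound from this inequality. However, Step~2 contains a slip: the claimed identity $\prod_i\delta_i=|\det(v_1,\ldots,v_n)|$ is false in general. What is true is $\prod_i|v_i^*|=|\det|$ for the Gram--Schmidt orthogonalizations $v_i^*$ relative to a fixed ordering, and since $\operatorname{span}(v_1,\ldots,v_{i-1})\subset W_i$ one has $\delta_i\le|v_i^*|$, so $\prod_i\delta_i\le|\det|$---the inequality goes the wrong way for your argument.

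The fix is simpler than what you wrote and is exactly what the paper (implicitly) does: bound each $\delta_i$ individually. Since $|\det|=\delta_i\cdot\vol_{n-1}(v_1,\ldots,\hat v_i,\ldots,v_n)\le\delta_i\prod_{j\ne i}|v_j|$ by Hadamard's inequality, one obtains directly
\[
\sin\mathrm{ang}(v_i,W_i)=\frac{\delta_i}{|v_i|}\ge\frac{|\det(v_1,\ldots,v_n)|}{\prod_j|v_j|}\ge 2^{-n(n-1)/4},
\]
giving $\theta_n=\arcsin\big(2^{-n(n-1)/4}\big)$. The paper's entire proof is this computation plus the Buser--Karcher citation; your inductive alternative and the minimization over all $\Z$-bases are unnecessary detours.
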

\begin{proof}
Given $\Lambda$,
it is proven in \cite[4.1.3~Proposition and 4.1.4~Proposition, p.~50--51]{BuserKarcher81} the existence of a $\mathds Z$-basis $v_1,\ldots,v_n$ of
$\Lambda$ such that:
\[\det(v_1,\ldots,v_n)\ge2^{-\frac14n(n-1)}\prod_{i=1}^n \vert v_i\vert.\]
Such a basis satisfies \eqref{eq:lowboundangle} for $\theta_n=\arcsin\big(2^{-\frac14n(n-1)}\big)$.
\end{proof}
Let now $R_0(\Lambda)>0$ be defined by:
\[
R_0(\Lambda)=\min\big\{r>0:\ \overline B(r)\ \text{contains a $\mathds Z$-basis $v_1,\ldots,v_n$ of $\Lambda$ satisfying \eqref{eq:lowboundangle}}\big\},
\]
where $\overline B(r)\subset\R^n$ denotes the closed ball of radius $r$ centered at $0$.

The set of $\mathds Z$-bases $w_1,\ldots,w_n$ of $\Lambda$ satisfying \eqref{eq:lowboundangle} and contained in $\overline B\big(R_0(\Lambda)\big)$ is finite.\footnote{%
Because the set of vectors $w\in\Lambda$ with $\vert w\vert\le R_0$ is finite.} We can rearrange the elements of each such basis in such a way that:
\begin{equation}\label{eq:order}
\vert w_1\vert\ge\vert w_2\vert\ge\ldots\ge\vert w_n\vert.
\end{equation}
We give the lexicographical order to the set of all ordered $\mathds Z$-basis $(w_1,\ldots,w_n)$ contained in $\overline B\big(R_0(\Lambda)\big)$ satisfying \eqref{eq:lowboundangle} and \eqref{eq:order}, and we denote by $(u_1,\ldots,u_n)$ a \emph{minimal} element of this set. We call such a minimal element a \emph{special basis of $\Lambda$}.
Clearly, if $(u_1,\ldots,u_n)$ is a special basis of $\Lambda$, then $\vert u_1\vert=R_0(\Lambda)$. 
\smallskip

Let us also recall that the diameter of the flat torus $\R^n/\Lambda$ is given by:
\begin{equation}\label{eq:diameter}
\mathrm{diam}(\R^n/\Lambda)=\max\big\{d>0:\exists\ v\in\R^n\ \text{with}\ \vert v\vert=d\ \text{and}\ B(v;d)\cap\Lambda=\emptyset\big\},
\end{equation}
where $B(v;d)$ denotes the open ball centered at $v$ and of radius $d$.
\begin{proposition}\label{thm:bounddiameter}
There exists a positive constant $\beta_n$ depending only on $n$ such that, given a lattice $\Lambda\subset\R^n$ and a special basis $(u_1,\ldots,u_n)$ of $\Lambda$, 
the diameter of the flat torus $\R^n/\Lambda$ is greater than or equal to $\beta_n \,\vert u_1\vert$.
\end{proposition}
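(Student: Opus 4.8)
The plan is to produce, inside the flat torus $\R^n/\Lambda$, an explicit metric ball of radius comparable to $|u_1|$ containing no point of $\Lambda$, and then read off the lower bound on the diameter from formula \eqref{eq:diameter}. The only property of a special basis that enters is the angle bound \eqref{eq:lowboundangle}, and the constant will be $\beta_n=\tfrac12\sin\theta_n=\tfrac12\,2^{-\frac14n(n-1)}$. First I would set $W_1=\operatorname{span}_\R(u_2,\dots,u_n)$, a hyperplane through the origin, and let $\pi\colon\R^n\to W_1^{\perp}$ be the orthogonal projection onto the line $W_1^{\perp}$. Since $(u_1,\dots,u_n)$ is a genuine $\mathds Z$-basis of $\Lambda$ and $\pi(u_i)=0$ for every $i\ge2$, the image $\pi(\Lambda)$ equals $\mathds Z\,\pi(u_1)$: a rank-one lattice in the line $W_1^{\perp}$ whose generator has length $h:=|\pi(u_1)|=\mathrm{dist}(u_1,W_1)$. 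This $h$ is positive because $u_1\notin W_1$, and \eqref{eq:lowboundangle} makes it quantitative:
\[
h=\mathrm{dist}(u_1,W_1)=|u_1|\sin\!\big(\mathrm{ang}(u_1,W_1)\big)\ \ge\ |u_1|\sin\theta_n,
\]
using that $\mathrm{ang}(u_1,W_1)\in[0,\tfrac\pi2]$, on which $\sin$ is increasing.

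Next I would pick a unit vector $e$ spanning $W_1^{\perp}$ and consider the point $p:=\tfrac12h\,e$. For any $\lambda\in\Lambda$ one has $\pi(p-\lambda)=p-\pi(\lambda)$, and $\pi(\lambda)$ is an integer multiple of the generator $\pm h\,e$ of $\pi(\Lambda)$; hence $|p-\lambda|\ge|\pi(p-\lambda)|=h\cdot\mathrm{dist}(\tfrac12,\mathds Z)=h/2$. Thus $\mathrm{dist}(p,\Lambda)\ge h/2$, while $\mathrm{dist}(p,\Lambda)\le|p|=h/2$ because $0\in\Lambda$; so in fact $\mathrm{dist}(p,\Lambda)=h/2$ and the open ball $B(p;h/2)$ misses $\Lambda$. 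Since $|p|=h/2$, the pair $(v,d)=(p,h/2)$ meets the requirements in \eqref{eq:diameter}, whence
\[
\mathrm{diam}(\R^n/\Lambda)\ \ge\ \frac h2\ \ge\ \frac{\sin\theta_n}{2}\,|u_1|,
\]
which is the assertion with $\beta_n=\tfrac12\sin\theta_n$.

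There is essentially no obstacle here; the two points that warrant a moment's care are the identity $\pi(\Lambda)=\mathds Z\,\pi(u_1)$ — which genuinely uses that $u_1,\dots,u_n$ is a $\mathds Z$-basis, not merely a spanning family of lattice vectors — and the bookkeeping confirming that the chosen deep hole $p$ satisfies the literal hypotheses $|v|=d$ and $B(v;d)\cap\Lambda=\emptyset$ of \eqref{eq:diameter}. Conceptually, the angle bound \eqref{eq:lowboundangle} furnished by Lemma~\ref{thm:lowboundangle} is exactly the device turning the qualitative fact that $u_1$ sticks out of the hyperplane $W_1$ into the quantitative estimate $\mathrm{dist}(u_1,W_1)\ge|u_1|\sin\theta_n$; collapsing the torus in the single direction $W_1^{\perp}$ does the rest.
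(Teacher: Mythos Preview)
Your proof is correct and takes a genuinely different, cleaner route than the paper's. The paper places the test point at $\beta_n u_1$ and must then rule out every lattice element $\epsilon u_1+\gamma$ with $\epsilon\in\mathds Z$ and $\gamma\in\operatorname{span}_{\mathds Z}\{u_2,\dots,u_n\}$; to dispose of the case $\epsilon=0$ it invokes the lexicographic minimality of the special basis (the inequality $|u_1+\gamma|\ge|u_1|$), and the remaining cases require a somewhat fiddly case analysis leading to $\beta_n=\min\{\tfrac12,\sin 2\theta_n\}$. You instead collapse the problem onto the line $W_1^\perp$: since $(u_1,\dots,u_n)$ is a $\mathds Z$-basis, the projection $\pi(\Lambda)$ is exactly $\mathds Z\,\pi(u_1)$, and the angle bound alone gives $|\pi(u_1)|\ge|u_1|\sin\theta_n$; the midpoint of that one-dimensional lattice is then an explicit deep hole. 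Your argument thus proves the slightly stronger statement that any $\mathds Z$-basis satisfying \eqref{eq:lowboundangle} (not necessarily a \emph{special} one) already yields the diameter bound, at the cost of a smaller constant $\beta_n=\tfrac12\sin\theta_n$ versus the paper's $\min\{\tfrac12,\sin 2\theta_n\}$. For the application to Claim~\ref{claim:sequenceofbases} the size of the constant is irrelevant, so your simplification is a net gain.
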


\begin{proof}
Using \eqref{eq:diameter}, the result will follow if we show the existence of $\beta_n>0$ such that
\begin{equation}\label{eq:ineqdiam}
\vert \beta_n\, u_1-\epsilon\,u_1-\gamma\vert^2\ge \beta_n^2\,\vert u_1\vert^2
\end{equation}
for all $\epsilon\in\mathds Z$ and all $\gamma\in\mathrm{span}_\mathds Z\{u_2,\ldots,u_n\}$.

In order to prove the desired inequality, we start observing that, by minimality, given $\gamma\in\mathrm{span}_\mathds Z\{u_2,\ldots,u_n\}$, we have:\footnote{%
From \eqref{eq:fromminimality}  it also follows that $\vert\gamma\vert^2\ge2\vert\langle u_1,\gamma\rangle\vert$, for all $\gamma\in\mathrm{span}_\mathds Z\{u_2,\ldots,u_n\},$ however this inequality is not used here.}
\begin{equation}\label{eq:fromminimality}
\vert u_1+\gamma\vert^2\ge\vert u_1\vert^2.
\end{equation}
Namely, if $\vert u_1+\gamma\vert<\vert u_1\vert$, then $\{w_1:=u_1+\gamma,u_2,\ldots,u_n\}$ would be another $\mathds Z$-basis of $\Lambda$ satisfying 
\eqref{eq:lowboundangle}  and contained in $\overline B\big(R_0(\Lambda)\big)$. The elements of such a basis could then be rearranged to obtain an ordered basis
which is strictly less than $(u_1,\ldots,u_n)$ in the lexicographical order. But this contradicts the minimality of $(u_1,\ldots,u_n)$  and shows that \eqref{eq:fromminimality} must hold.

Note that \eqref{eq:fromminimality} implies that, when $\epsilon=0$, \eqref{eq:ineqdiam} holds for all $\beta_n\in[0,1]$ and all $\gamma\in\mathrm{span}_\mathds Z\{u_2,\ldots,u_n\}$.
On the other hand, by Lemma~\ref{thm:lowboundangle}:
\begin{equation}\label{eq:anggammav1}
\phantom{\quad\forall\,\gamma\in\mathrm{span}_\mathds Z\{u_2,\ldots,u_n\}}
\mathrm{ang}(\gamma,u_1)\ge\theta_n,\quad\forall\,\gamma\in\mathrm{span}_\mathds Z\{u_2,\ldots,u_n\}.
\end{equation}
Let us set 
\begin{equation}\label{eq:defbetan}
\beta_n=\min\big\{\tfrac12,\ \sin(2\theta_n)\big\}
\end{equation}
and show that, with such a choice\footnote{%
$\beta_2=\beta_3=\frac12$, $\beta_n=\sin(2\theta_n)$ for all $n\ge4$.}, inequality \eqref{eq:ineqdiam} holds. Towards this goal, for all $\gamma\in\mathrm{span}_\mathds Z\{u_2,\ldots,u_n\}$, denote by $\gamma_1=\frac{\langle\gamma,u_1\rangle}{\langle u_1,u_1\rangle}u_1$ the orthogonal projection of $\gamma$ in the direction of $u_1$, and by
$\gamma_\perp=\gamma-\gamma_1$ the component of $\gamma$ orthogonal to $u_1$. 

Note that if $\gamma\in\mathrm{span}_\mathds Z\{u_2,\ldots,u_n\}$ is such that $\vert\gamma_\perp\vert\ge\beta_n\,\vert u_1\vert$, then clearly, for all $\epsilon\in\mathds Z$, 
the vector $\gamma+\epsilon\, u_1$ cannot belong to the open ball centered at $\beta_n\, u_1$ and of radius $\beta_n\,\vert u_1\vert$. Thus, in order to check \eqref{eq:ineqdiam}, it suffices to consider those $\gamma\in\mathrm{span}_\mathds Z\{u_2,\ldots,u_n\}$ satisfying:
\begin{equation}\label{eq:normgammaperp}
\vert\gamma_\perp\vert<\beta_n\,\vert u_1\vert.
\end{equation}
Moreover, such a $\gamma$ must satisfy \eqref{eq:fromminimality}, which implies that:
\begin{equation}\label{eq:ineqgamma1}
\sqrt{\beta_n^2\,\vert u_1\vert^2-\vert\gamma_\perp\vert^2}-\beta_n\,\vert u_1\vert\le\vert\gamma_1\vert\le \beta_n\,\vert u_1\vert-\sqrt{\beta_n^2\,\vert u_1\vert^2-\vert\gamma_\perp\vert^2}.
\end{equation}
For such a $\gamma$, it is easy to see that $\big\vert\gamma+(\epsilon-\beta_n)\, u_1\big\vert>\beta_n\vert u_1\vert$ for all $\epsilon\in\mathds Z$, see Figure~\ref{fig:gamma}.
\begin{figure}
\centering
\includegraphics[scale=0.5]{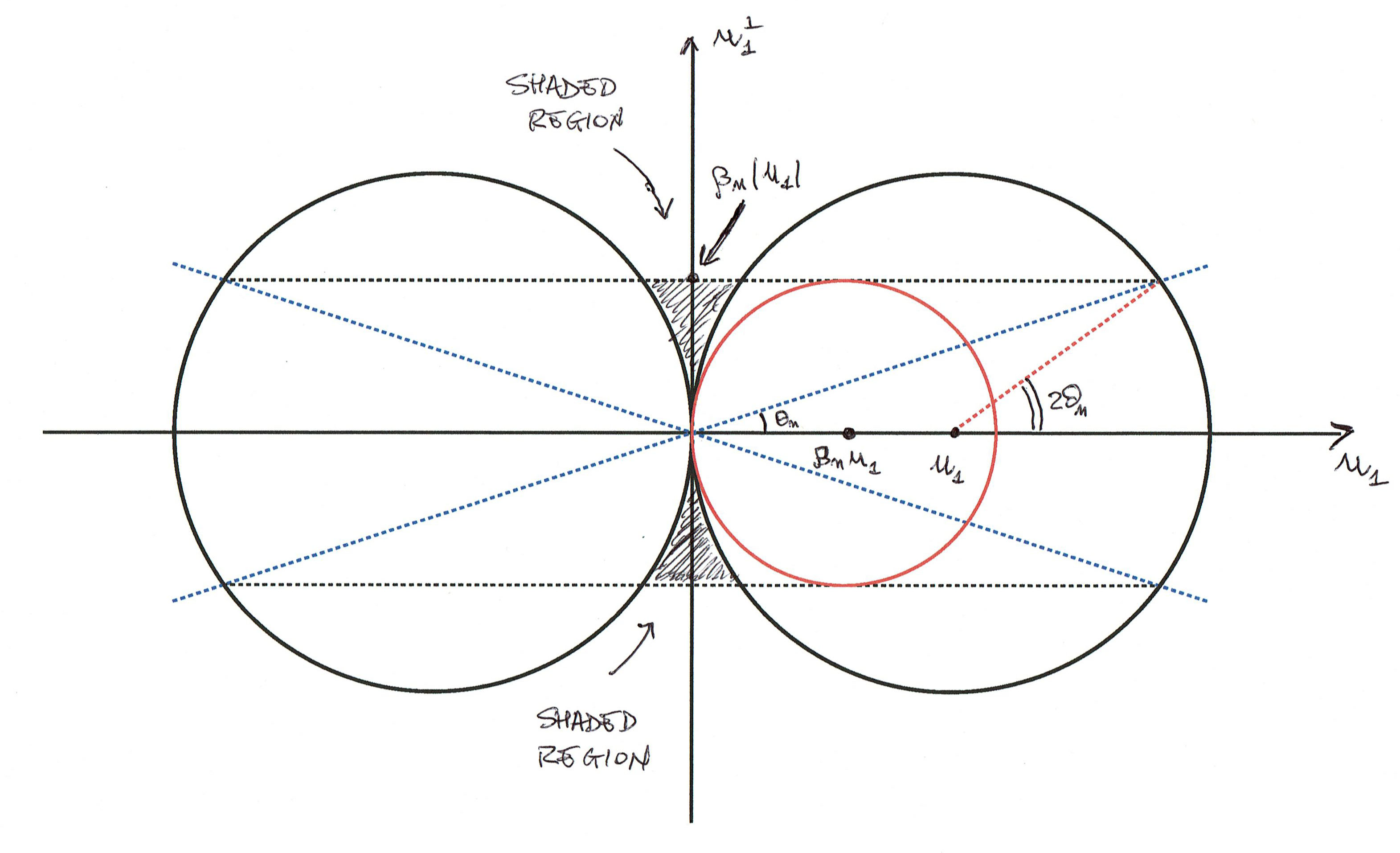}
\caption{An element $\gamma\in\mathrm{span}_\mathds Z\{u_2,\ldots,u_n\}$ satisfying \eqref{eq:fromminimality}, \eqref{eq:anggammav1} and \eqref{eq:normgammaperp} must lie in the shaded region of the picture.
Gven such a $\gamma$, any vector of the form $\gamma+\epsilon\, u_1$, with $\epsilon\in\mathds Z$, does not belong to the ball centered at $\beta_n\, u_1$ and of radius $\beta_n\,\vert u_1\vert$.}
\label{fig:gamma}
\end{figure}
Namely, for $\epsilon\ge1$, since $\beta_n\le\frac12$:
\begin{equation*}
2\sqrt{\beta_n^2\,\vert u_1\vert^2-\vert\gamma_\perp\vert^2}-2\beta_n\,\vert u_1\vert+\epsilon\,\vert u_1\vert \ge2\sqrt{\beta_n^2\,\vert u_1\vert^2-\vert\gamma_\perp\vert^2}+1(1-2\beta_n)\,\vert u_1\vert>0,
\end{equation*}
and this shows that  $\gamma+\epsilon\, u_1$ does not belong to the ball centered at $\beta_n\, u_1$ of radius $\beta_n\,\vert u_1\vert$. For $\epsilon\le0$, the vector $\gamma+\epsilon\, u_1$ does not even belong to the ball centered at $u_1$ with radius $\vert u_1\vert$,  completing the proof.
\end{proof}

\begin{proof}[Proof of Claim~\ref{claim:sequenceofbases}]
It suffices to choose $(v_1^{(k)},\ldots,v_n^{(k)})$ a special basis (ordered by decreasing norm of its elements) of $L^{(k)}$, as in Proposition~\ref{thm:bounddiameter}.
Since the diameter of $\R^n/L^{(k)}$ is bounded, then $\vert v_j^{(k)}\vert$ is a bounded sequence for all $j=1,\ldots,n$. Thus, up to taking subsequences, we can assume that each $v_j^{(k)}$ is convergent to some $w_j\in\R^n$.  There exists $j_0\in\{1,\ldots,n\}$ such that $w_j=0$ for $j> j_0$, and $w_j\ne0$ for $j\le j_0$. We claim that $w_1,\ldots,w_{j_0}$ is a linearly independent set. Namely, if $w_s$ were a linear combination of
$w_1,\ldots,\widehat w_s,\ldots,w_{j_0}$ for some $s\in\{1,\ldots,j_0\}$, then the angle between $v_s^{(k)}$ and the space generated by $v_1^{(k)},\ldots,\widehat {v_s}^{(k)},\ldots,v_{j_0}^{(k)}$ would be arbitrarily small as $k\to\infty$.
But for a special basis this is not possible, and this contradiction shows that $w_1,\ldots,w_{j_0}$ is a linearly independent set.
\end{proof}


\begin{thebibliography}{10}

\bibitem{mybook}
{\sc M.~M. Alexandrino and R.~G. Bettiol}, {\em {L}ie {G}roups and {G}eometric
  {A}spects of {I}sometric {A}ctions}, Springer, 2015.

\bibitem{auslanderkuranishi57}
{\sc L.~Auslander and M.~Kuranishi}, {\em On the holonomy group of locally
  {E}uclidean spaces}, Ann. of Math. (2), 65 (1957), 411--415.

\bibitem{Baues2000}
{\sc O.~Baues}, {\em Varieties of discontinuous groups}, in Crystallographic
  groups and their generalizations ({K}ortrijk, 1999), vol.~262 of Contemp.
  Math., Amer. Math. Soc., Providence, RI, 2000, 147--158.

\bibitem{BetPic2016}
{\sc R.~G. Bettiol and P.~Piccione}, {\em Infinitely many solutions to the
  {Y}amabe problem on noncompact manifolds}, preprint,  (2016).
\newblock
  \htmladdnormallink{arXiv:1603.07788}{http://arxiv.org/abs/1603.07788}.

\bibitem{bieberbach}
{\sc L.~Bieberbach}, {\em \"{U}ber die {B}ewegungsgruppen der {E}uklidischen
  {R}\"aume, {I}}, Math. Ann., 70 (1911), 297--336.

\bibitem{bieberbach2}
{\sc L.~Bieberbach}, {\em \"{U}ber die {B}ewegungsgruppen der {E}uklidischen
  {R}\"aume, {II}. {D}ie {G}ruppen mit einem endlichen {F}undamentalbereich},
  Math. Ann., 72 (1912), 400--412.

\bibitem{bh-book}
{\sc M.~R. Bridson and A.~Haefliger}, {\em Metric spaces of non-positive
  curvature}, vol.~319 of Grundlehren der Mathematischen Wissenschaften
  [Fundamental Principles of Mathematical Sciences], Springer-Verlag, Berlin,
  1999.

\bibitem{crystalbook}
{\sc H.~Brown, R.~B{\"u}low, J.~Neub{\"u}ser, H.~Wondratschek, and
  H.~Zassenhaus}, {\em Crystallographic groups of four-dimensional space},
  Wiley-Interscience [John Wiley \& Sons], New York-Chichester-Brisbane, 1978.
\newblock Wiley Monographs in Crystallography.

\bibitem{buser-bieber}
{\sc P.~Buser}, {\em A geometric proof of {B}ieberbach's theorems on
  crystallographic groups}, Enseign. Math. (2), 31 (1985), 137--145.

\bibitem{BuserKarcher81}
{\sc P.~Buser and H.~Karcher}, {\em Gromov's almost flat manifolds}, vol.~81 of
  Ast\'erisque, Soci\'et\'e Math\'ematique de France, Paris, 1981.

\bibitem{charlap}
{\sc L.~S. Charlap}, {\em Bieberbach groups and flat manifolds}, Universitext,
  Springer-Verlag, New York, 1986.

\bibitem{cids}
{\sc C.~Cid and T.~Schulz}, {\em Computation of five- and six-dimensional
  {B}ieberbach groups}, Experiment. Math., 10 (2001), 109--115.

\bibitem{conway}
{\sc J.~H. Conway}, {\em The orbifold notation for surface groups}, in Groups,
  combinatorics \&\ geometry ({D}urham, 1990), vol.~165 of London Math. Soc.
  Lecture Note Ser., Cambridge Univ. Press, Cambridge, 1992, 438--447.

\bibitem{davis-notes}
{\sc M.~W. Davis}, {\em Lectures on orbifolds and reflection groups}, in
  Transformation groups and moduli spaces of curves, vol.~16 of Adv. Lect.
  Math. (ALM), Int. Press, Somerville, MA, 2011, 63--93.

\bibitem{fm}
{\sc B.~Farb and D.~Margalit}, {\em A primer on mapping class groups}, vol.~49
  of Princeton Mathematical Series, Princeton University Press, Princeton, NJ,
  2012.

\bibitem{Fuka85}
{\sc K.~Fukaya}, {\em Theory of convergence for {R}iemannian orbifolds}, Japan.
  J. Math. (N.S.), 12 (1986), 121--160.

\bibitem{FukaYama92}
{\sc K.~Fukaya and T.~Yamaguchi}, {\em The fundamental groups of almost
  non-negatively curved manifolds}, Ann. of Math. (2), 136 (1992), 253--333.

\bibitem{fernando-luis}
{\sc F.~Galaz-Garcia and L.~Guijarro}, {\em On three-dimensional {A}lexandrov
  spaces}, Int. Math. Res. Not. IMRN,  (2015), 5560--5576.

\bibitem{Goldman88}
{\sc W.~M. Goldman}, {\em Geometric structures on manifolds and varieties of
  representations}, in Geometry of group representations ({B}oulder, {CO},
  1987), vol.~74 of Contemp. Math., Amer. Math. Soc., Providence, RI, 1988,
  169--198.

\bibitem{Gromov81}
{\sc M.~Gromov}, {\em Metric structures for {R}iemannian and non-{R}iemannian
  spaces}, Modern Birkh\"auser Classics, Birkh\"auser Boston, Inc., Boston, MA,
  {E}nglish~ed., 2007.
\newblock Based on the 1981 French original, With appendices by M. Katz, P.
  Pansu and S. Semmes.

\bibitem{hantwend}
{\sc W.~Hantzsche and H.~Wendt}, {\em Dreidimensionale euklidische
  {R}aumformen}, Math. Ann., 110 (1935), 593--611.

\bibitem{harvey-searle}
{\sc J.~Harvey and C.~Searle}, {\em Orientation and symmetries of {A}lexandrov
  spaces with applications in positive curvature}, preprint,  (2012).
\newblock \htmladdnormallink{arXiv:1209.1366}{http://arxiv.org/abs/1209.1366}.

\bibitem{hiss-szczepa}
{\sc G.~Hiss and A.~Szczepa{\'n}ski}, {\em On torsion free crystallographic
  groups}, J. Pure Appl. Algebra, 74 (1991), 39--56.

\bibitem{joyce-cy}
{\sc D.~Joyce}, {\em Deforming {C}alabi-{Y}au orbifolds}, Asian J. Math., 3
  (1999), 853--867.

\bibitem{Kang06}
{\sc E.~S. Kang}, {\em Moduli spaces of 3-dimensional flat manifolds}, J.
  Korean Math. Soc., 43 (2006), 1065--1080.

\bibitem{KangKim03}
{\sc E.~S. Kang and J.~Y. Kim}, {\em Deformation spaces of 3-dimensional flat
  manifolds}, Commun. Korean Math. Soc., 18 (2003), 95--104.

\bibitem{kleiner-lott}
{\sc B.~Kleiner and J.~Lott}, {\em Geometrization of three-dimensional
  orbifolds via {R}icci flow}, preprint,  (2011).
\newblock \htmladdnormallink{arXiv:1101.3733}{http://arxiv.org/abs/1101.3733}.

\bibitem{Mahler46}
{\sc K.~Mahler}, {\em On lattice points in {$n$}-dimensional star bodies. {I}.
  {E}xistence theorems}, Proc. Roy. Soc. London. Ser. A., 187 (1946), 151--187.

\bibitem{good-orbifolds}
{\sc Y.~Matsumoto and J.~M. Montesinos-Amilibia}, {\em A proof of {T}hurston's
  uniformization theorem of geometric orbifolds}, Tokyo J. Math., 14 (1991),
  181--196.

\bibitem{plesken-schulz}
{\sc W.~Plesken and T.~Schulz}, {\em Counting crystallographic groups in low
  dimensions}, Experiment. Math., 9 (2000), 407--411.

\bibitem{sczcproblems}
{\sc A.~Szczepa{\'n}ski}, {\em Problems on {B}ieberbach groups and flat
  manifolds}, preprint,  (2008).
\newblock \htmladdnormallink{arXiv:0507355}{http://arxiv.org/abs/math/0507355}.

\bibitem{szczepa-book}
{\sc A.~Szczepa{\'n}ski}, {\em Geometry of crystallographic groups}, vol.~4 of
  Algebra and Discrete Mathematics, World Scientific Publishing Co. Pte. Ltd.,
  Hackensack, NJ, 2012.

\bibitem{thurston}
{\sc W.~P. Thurston}, {\em Three-dimensional geometry and topology. {V}ol. 1},
  vol.~35 of Princeton Mathematical Series, Princeton University Press,
  Princeton, NJ, 1997.
\newblock Edited by Silvio Levy.

\bibitem{Wolf73}
{\sc J.~A. Wolf}, {\em Local and global equivalence for flat affine manifolds
  with parallel geometric structures}, Geometriae Dedicata, 2 (1973), 127--132.

\bibitem{Wolfbook}
{\sc J.~A. Wolf}, {\em Spaces of constant curvature}, Publish or Perish, Inc.,
  Houston, TX, fifth~ed., 1984.

\end{thebibliography}
\end{document}